\def\myarabic#1{\normalfont(\roman{#1})}
\newlist{theoremlist}{enumerate}{1}
\setlist[theoremlist]{label=\myarabic{theoremlisti},ref={\myarabic{theoremlisti}},itemindent=0pt,labelindent=0pt,
  leftmargin=*,noitemsep}
\renewcommand{\p@theoremlisti}{\perh@ps{\thetheorem}}
\protected\def\perh@ps#1#2{\textup{#1#2}}
\newcommand{\itemrefperh@ps}[2]{\textup{#2}}
\newcommand{\itemref}[1]{\begingroup\let\perh@ps\itemrefperh@ps\ref{#1}\endgroup}
\newtheorem{theorem}{Theorem}[section]
\newtheorem{lemma}[theorem]{Lemma}
\newtheorem{proposition}[theorem]{Proposition}
\newtheorem{corollary}[theorem]{Corollary}
\theoremstyle{definition}
\theoremstyle{definition}
\newtheorem{remark}[theorem]{Remark}
\theoremstyle{definition}
\newtheorem{definition}[theorem]{Definition}
\newtheorem{question}[theorem]{Question}
\theoremstyle{definition}
\theoremstyle{definition}
\newtheorem{example}[theorem]{Example}
\crefname{figure}{Figure}{Figures}
\def\figref#1(#2){Figure~\hyperref[#1]{\ref*{#1}(#2)}}
\def\Fcal{\mathcal{F}}\def\Mcal{\mathcal{M}}\def\Scal{\mathcal{S}}\def\Xcal{\mathcal{X}}
\def\C{{\mathbb{C}}}
\def\R{{\mathbb{R}}}
\def\Z{{\mathbb{Z}}}
\newcommand\parr[1]{{({#1})}}
\def\<{{\langle}}
\def\>{{\rangle}}
\def\eps{{\epsilon}}
\def\la{{\lambda}}
\def\Conv{ \operatorname{Conv}}
\def\proj{ \operatorname{proj}}
\def\xrasim{\xrightarrow{\sim}}
\def\Povtp_#1{\Pi_{#1}^{>0}}
\def\Povtnn_#1{\Pi_{#1}^{\geq0}}
\numberwithin{equation}{section}
\def\Space{\Xcal}
\def\Closure{\overline{\Space}}
\def\bx{{\bm x}}
\def\by{{\bm y}}
\def\bz{{\bm z}}
\crefname{figure}{Figure}{Figures}
\def\eps{\varepsilon}
\def\ggp_#1{\gamma^{\C}_{f',#1}}
\def\Ptp_#1{\Pi^{>0}_{#1}}
\def\Ptnn_#1{\Pi^{\geq0}_{#1}}
\def\t{u}
\def\Measop{\operatorname{Meas}}
\def\Meas(#1,#2){\Measop_{#1}(#2)}
\def\Measp(#1,#2){\Measop'_{#1}(#2)}
\def\Crit{\operatorname{Crit}}
\def\Ctp{\Crit^{>0}}
\newcommand{\raisemath}[1]{\mathpalette{\raisem@th{#1}}}
\newcommand{\raisem@th}[3]{\raisebox{#1}{$#2#3$}}
\def\eps{\epsilon}
\def\Ctnn{\Crit^{\geq0}}
\def\Measd(#1,#2){\mathrm{M\widehat{e\vphantom{i}a}s}_{#1}(#2)}
\def\xrasim{\xrightarrow{\sim}}
\def\bt{{\bm t}}
\def\t{t}
\def\CCrit{\Crit}
\def\Cio{\CCrit^{\circ}}
\def\v{v}
\def\CioR_#1{\Cio_{#1}(\R)}
\def\conn_#1{c_{#1}}
\def\THtp{\Theta^{>0}}
\def\Hyp_#1{\Delta_{#1}}
\def\Cl(#1){#1^\boxtimes}
\let\ge\geqslant
\let\geq\geqslant
\let\leq\leqslant
\def\Meascl(#1,#2){{\overline{\Measop}}_{#1}(#2)}
\def\dsh#1{#1^\downarrow}
\def\Rtp{\R_{>0}}
\def\Rtnn{\R_{\ge0}}
\def\Ztnn{\Z_{\ge0}}
\def\Pmid_#1{\Pi^{>0}_{#1,\dsh{#1}}}
\def\paragraph#1{\subsubsection*{#1}}
\def\Mscl{1}
\def\crat(#1,#2;#3,#4){(\v_{#1},\v_{#2};\v_{#3},\v_{#4})}
\def\GH_#1{G^{\Mcal}_{#1}}
\def\Ordop{{\mathscr{O}}}
\def\COrdop{{\mathscr{L}}}
\def\Ord{\Ordop}
\def\COrd{\COrdop}
\def\Orda{\Ordop}
\def\Sim{\operatorname{Sim}_1}
\def\Ass{{\mathscr{A}}}
\def\Cyc{{\mathscr{C}}}
\def\Oao{\Ord^\circ}
\def\tube{\tau}
\def\Tubing{\mathbf{T}}
\def\KAss(#1){\mathcal{K}_{\Ass}(#1)}
\def\KCyc(#1){\mathcal{K}_{\Cyc}(#1)}
\def\FCyc(#1){\mathcal{F}_{\Cyc}(#1)}
\def\Adm(#1;#2){\operatorname{Adm}(#2)}
\def\Melt{\Mcal}
\def\Pa{\tilde P}
\def\xt{\tilde x}
\def\bxt{\tilde \bx}
\def\FM_#1{\Fcal_{\Ass_\Melt}(P;#1)}
\def\FMp_#1{\Fcal_{\Ass_{\Melt'}}(P;#1)}
\def\Triples(#1){#1^{\<3\>}}
\def\PTubing{\Tubing\sqcup\{P\}}
\def\PT{\PTubing}
\def\PTp{\T'\sqcup\{P\}}
\def\HAT{\widehat\Tubing}
\def\al_#1{\alpha_{#1}}
\def\alp_#1{\alpha'_{#1}}
\def\PTTx_#1{P_{#1}}
\def\Oo{\Ordop^\circ}
\def\COo{\COrdop^\circ}
\def\D_#1{\Comp_{#1}(P)}
\def\Comp{\operatorname{Comp}}
\def\CHx(#1,#2){\widehat#2[#1]}
\def\CH(#1){\CHx(#1,\Tubing)}
\def\Px[#1]{#1}
\def\Pax[#1]{#1}
\def\Star{\operatorname{Star}}
\def\Afford(#1){\R^{|#1|-2}}
\def\Closure(#1){\overline{#1}}
\def\bnull{\mathbf{0}}
\def\coord(#1,#2){z_{#2}\tbr[#1]}
\def\coordx(#1,#2){x_{#2}\tbr[#1]}
\def\coordxp(#1,#2){x'_{#2}\tbr[#1]}
\def\coordxt(#1,#2){\tilde x_{#2}\tbr[#1]}
\def\cxi_#1(#2,#3){x^\parr{#1}_{#3}\tbr[#2]}
\def\cyi_#1(#2,#3){y^\parr{#1}_{#3}\tbr[#2]}
\def\czi_#1(#2,#3){z^\parr{#1}_{#3}\tbr[#2]}
\def\coordy(#1,#2){y_{#2}\tbr[#1]}
\def\coordyt(#1,#2){\tilde y_{#2}\tbr[#1]}
\def\coordypn(#1,#2){y^\parr n_{#2}\tbr[#1]}
\def\coordxpn(#1,#2){x^\parr n_{#2}\tbr[#1]}
\def\coordz(#1,#2){z_{#2}\tbr[#1]}
\def\coordzp(#1,#2){z'_{#2}\tbr[#1]}
\def\ATTx(#1){A(#1)}
\def\BTTx(#1){B(#1)}
\def\tmax{t^{\operatorname{max}}}
\def\DT{\D_{\Tubing}}
\def\OFace(#1,#2){\Fcal^\circ_{\Ord}(#1,#2)}
\def\COFace(#1,#2){\Fcal^\circ_{\COrd}(#1,#2)}
\def\OFacecl(#1,#2){\Fcal_{\Ord}(#1,#2)}
\def\COFacecl(#1,#2){\Fcal_{\COrd}(#1,#2)}
\def\OaFace(#1,#2){\Fcal^\circ_{\Ord}(#1,#2)}
\def\OaFacecl(#1,#2){\Fcal_{\Ord}(#1,#2)}
\def\embop{\operatorname{Res}}
\def\T{\Tubing}
\def\t{\tube}
\def\avg{\operatorname{avg}}
\def\coll{\operatorname{coll}}
\def\ex{\operatorname{ex}}
\def\Coll{\operatorname{Coll}}
\def\Ex{\operatorname{Ex}}
\def\tp{\tau_+}
\def\Adj{\operatorname{Adj}}
\def\Adjp{\Adj'}
\def\Pareq(#1,#2){#1^{\operatorname{min}}_{\supseteq #2}}
\def\ijP{P_{\t}^{\tp}}
\def\jiP{P_{\tp}^\t}
\def\ti{\t^\parr i}
\def\tpi{\tp^\parr i}
\def\tubes{{\mathbf{B}}}
\def\Coh{\operatorname{Coh}}
\def\tm{\t_-}
\def\Poincare{Poincar\'e\xspace}
\def\EQ[#1]{\overline{#1}}
\def\eq[#1]{\overline{#1}}
\def\lP{\prec_P}
\def\lp{\prec_P}
\def\gp{\succ_P}
\def\leqp{\preceq_P}
\def\lPa{\prec_{\Pa}}
\def\Paf{\Pa_f}
\def\lpaf{\prec_{\Paf}}
\def\lpa{\prec_{\Pa}}
\def\leqPa{\preceq_{\Pa}}
\def\leqpa{\preceq_{\Pa}}
\def\EQTubes(#1){[\Tubes(#1)]}
\def\Pdash{}
\def\Padash{$\Pa$-}
\def\Padash{}
\def\tbr[#1]{[#1]}
\def\RSZ{\R_{\Sigma=0}}
\def\Coo{\COrdop^\circ}
\def\Ohat{\hat{\Ordop}}
\def\proj{\pi}
\def\psz{\proj_{\Sigma=0}}
\def\Sbf{\mathbf{S}}
\def\Stel{\operatorname{Stel}}
\def\pbt{\parr\bt}
\def\adj{\operatorname{adj}}
\def\embop{\rho}
\def\res_#1{\embop_{#1}}
\def\Res#1#2{\embop_{#1}}
\def\ResP{\embop}
\def\ResPa{\tilde\embop}
\def\pan{^\parr n}
\def\pai{^\parr i}
\def\Perm{\Pi}
\def\PermB{\Pi^B}
\def\prodb{\bar\prod}
\def\Triples{\operatorname{Triples}}
\newcommand{\precdot}{\prec\mathrel{\mkern-5mu}\mathrel{\cdot}}
\def\pat{^\parr t}
\def\posetdash{$P$-}
\def\Posetdash{$P$-}
\def\aposetdash{$\Pa$-}
\def\piping{piping\xspace}
\def\pipings{pipings\xspace}
\def\Piping{Piping\xspace}
\def\pipe{pipe\xspace}
\def\pipes{pipes\xspace}
\def\Pdashpipe{$P$-pipe\xspace}
\def\Pdashpipes{$P$-pipes\xspace}
\def\Pdashpiping{$P$-piping\xspace}
\def\Pdashpipings{$P$-pipings\xspace}
\begin{document}
\numberwithin{equation}{section}

%
\title{$P$-associahedra}
\author{Pavel Galashin}
\address{Department of Mathematics, University of California, Los Angeles, CA 90095, USA}
\email{{\href{mailto:galashin@math.ucla.edu}{galashin@math.ucla.edu}}}
\thanks{The author was supported by an Alfred P. Sloan Research Fellowship and by the National Science Foundation under Grants No.~DMS-1954121 and No.~DMS-2046915.}
\date{\today}

\subjclass[2020]{ 
  Primary:
  52B11. %
  Secondary:
  05E99, %
  06A07, %
  54D35. %
}

\keywords{Poset, associahedron, cyclohedron, configuration space, compactification}

\begin{abstract}
For each poset $P$, we construct a polytope $\Ass(P)$ called the \emph{$P$-associahedron}. Similarly to the case of graph associahedra, the faces of $\Ass(P)$ correspond to certain nested collections of subsets of $P$. The Stasheff associahedron is a compactification of the configuration space of $n$ points on a line, and we recover $\Ass(P)$ as an analogous compactification of the space of order-preserving maps $P\to\R$. Motivated by the study of totally nonnegative critical varieties in the Grassmannian, we introduce \emph{affine poset cyclohedra} and realize these polytopes as compactifications of configuration spaces of $n$ points on a circle. For particular choices of (affine) posets, we obtain associahedra, cyclohedra, permutohedra, and type $B$ permutohedra as special cases.
\end{abstract}

\maketitle

\begin{figure}[h]
\begin{tabular}{cccc}
\qquad\qquad&\begin{tikzpicture}[baseline=(Z.base)]
\coordinate(Z) at (0,-3);
\node(A) at (0,0){\includegraphics[width=0.15\textwidth]{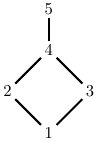}};
\end{tikzpicture}
 & \qquad\qquad&
\includegraphics[width=0.59\textwidth]{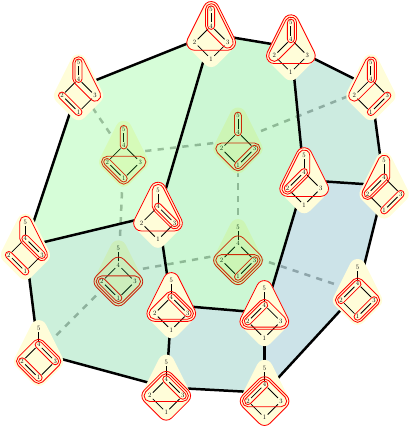} \\
\\
&$P$ & & $\Ass(P)$
\end{tabular}
  \caption{\label{fig:poset_ass_3d} A \posetdash associahedron.}
\end{figure}

\section{Introduction}
Polytopes arising from combinatorial data have been studied extensively in the recent decades. Some examples include \emph{order polytopes}~\cite{Stanley_two}, \emph{graph associahedra}~\cite{CaDe}, \emph{generalized permutohedra}~\cite{Pos_perm}, \emph{the associahedron}~\cite{Tamari,Stasheff,Haiman,Lee}, and \emph{the cyclohedron}~\cite{BoTa,Simion}. The latter two polytopes may be obtained as compactifications of configuration spaces of $n$ points on a line and on a circle, respectively; see e.g.~\cite{FuMa,AxSi,Kontsevich,Sinha,Gaiffi,LTV}.

 The goal of the present paper is to introduce a new class of polytopes called \emph{\posetdash associahedra} which combines the notions of graph associahedra and order polytopes in a natural way, and to show that these polytopes arise as compactifications of \emph{poset configuration spaces} of points on a line. We review these results in Sections~\ref{sec:intro:poset_ass}--\ref{sec:intro:compact}. We then introduce \emph{affine posets} and \emph{affine \aposetdash cyclohedra} in \cref{sec:intro:affine}. They correspond to compactifying \emph{affine poset configuration spaces} of points on a circle rather than on a line, and lead to applications to critical varieties~\cite{crit} which we pursue in a separate paper~\cite{crit_tnn}. %

\subsection{\posetdash associahedra}\label{sec:intro:poset_ass}
Let $(P,\leqp)$ be a finite connected poset with $|P|\geq2$. Recall from~\cite{Stanley_two} that the faces of the \emph{order polytope} of $P$ correspond\footnote{Stanley's construction of an order polytope only applies when $P$ has a minimal and a maximal element. In~\eqref{eq:intro:Oo_dfn}, we slightly modify his construction to include arbitrary connected posets.} to set partitions $\T$ of $P$ such that each $\t\in\T$ is a convex connected subset of $P$, and such that the directed graph $D_\T$ with vertex set $V(D_\T):=\T$ and edge set
\begin{equation}\label{eq:intro:acyclic}
  E(D_\T):=\{(\t,\t')\in\T^2\mid \t\cap \t'=\emptyset\text{ and }i\lp j \text{ for some $i\in\t$, $j\in\t'$}\}
\end{equation}
is acyclic. 
 Here a subset $\t\subseteq P$ is called \emph{convex} if having $i\lp j\lp k$ with $i,k\in\t$ implies $j\in \t$, and $\t$ is called \emph{connected} if the corresponding induced subgraph of the Hasse diagram of $P$ is connected. Let us say that two sets $A,B$ are \emph{nested} if either $A\subseteq B$ or $B\subseteq A$.
\begin{definition}
A \emph{\Pdashpipe} is a convex connected nonempty subset $\t\subseteq P$. A \emph{\Pdashpiping} is a collection $\T$ of \Pdashpipes such that any two sets $\t,\t'\in\T$ are either nested or disjoint, and such that the directed graph $D_\T$
given by~\eqref{eq:intro:acyclic} is acyclic.
\end{definition}

 When the poset $P$ is clear from the context, we refer to \Pdashpipes (resp., \Pdashpipings) simply as \emph{\pipes} (resp., \emph{\pipings}). We say that a \Pdash \pipe $\t$ is \emph{proper} if $1<|\t|<|P|$. A \Pdash \piping is \emph{proper} if it consists of proper \Pdash \pipes. Clearly, a subset of a proper \Pdash \piping is a proper \Pdash \piping. We let $\KAss(P)$ be an abstract simplicial complex whose vertices correspond to proper \Pdash \pipes, and whose simplices correspond to proper \Pdash \pipings.

\begin{theorem}[\Posetdash associahedron]\label{thm:poset_ass}
There exists a simplicial $(|P|-2)$-dimensional polytope $\Ass(P)^\ast$ whose boundary complex is isomorphic to $\KAss(P)$.
\end{theorem}
By definition, the \emph{\posetdash associahedron} $\Ass(P)$ is the polar dual of the polytope constructed in \cref{thm:poset_ass}. Thus $\Ass(P)$ is a simple polytope of dimension $|P|-2$ whose facets correspond to proper \Pdash \pipes and whose vertices correspond to maximal by inclusion proper \Pdash \pipings. See \cref{fig:poset_ass_3d,fig:ass_perm} for examples. 

We list some properties and examples of \posetdash associahedra in \cref{sec:prop}. For instance, similarly to other families of combinatorial polytopes (including permutohedra and associahedra), each face of $\Ass(P)$ is a product of smaller \posetdash associahedra. When $P$ is a chain, $\Ass(P)$ is combinatorially equivalent to the $(|P|-2)$-dimensional associahedron. When $P$ is a \emph{claw} (that is, $P$ contains a minimal element $\hat0$ and any two other elements of $P$ are incomparable), $\Ass(P)$ is combinatorially equivalent to the $(|P|-2)$-dimensional permutohedron. See \cref{fig:ass_perm} for two-dimensional examples. 

\begin{figure}

\scalebox{0.95}{
\begin{tabular}{cccc|cccc}
 \includegraphics[height=0.2\textwidth]{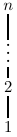}& \qquad&
 \includegraphics[width=0.32\textwidth]{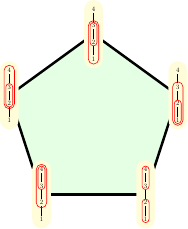}
& \qquad&\qquad & 
 \includegraphics[height=0.1\textwidth]{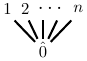}& 
\includegraphics[width=0.4\textwidth]{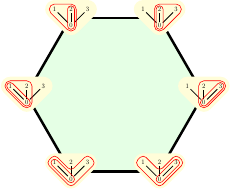}\\ &&&&&&& \\
$P$& & $\Ass(P)=$ associahedron & & & $P$ & $\Ass(P)=$ permutohedron
\end{tabular}
}
\caption{\label{fig:ass_perm} If $P$ is a chain (left) then $\Ass(P)$ is the associahedron. If $P$ is a claw (right) then $\Ass(P)$ is the permutohedron.}
\end{figure}

\begin{remark}
The set of \Pdash \pipes is not a \emph{building set} in the sense of~\cite{DCP,FeSt,Pos_perm} since the union of two \Pdash \pipes whose intersection is nonempty need not be a \Pdash \pipe. (It need not be convex.) Thus \posetdash associahedra are not special cases of graph associahedra or nestohedra. 
\end{remark}
\begin{remark}
Our notions of poset \pipes and \pipings differ from the well-studied notions of graph tubes and tubings~\cite{CaDe} in several ways. First, \Pdashpipes are assumed to be convex. Second, \Pdashpipes of size $1$ are not included in a proper \Pdashpiping. (In particular, when $P$ is a chain poset, \Pdashpipings may be more naturally viewed as bracketings; cf.~\cite[Figure~1(a)]{CaDe}.)  Third,  a graph tubing cannot contain two \emph{adjacent} graph tubes, i.e., two disjoint graph tubes whose union is a graph tube. We do not impose this restriction in the definition of \posetdash associahedra. Instead, we impose an acyclicity constraint~\eqref{eq:intro:acyclic} on \Pdashpipings.
\end{remark}
\begin{remark}
A different family of polytopes associated to posets was constructed in~\cite{DFRS}. We do not see any direct relation between the two constructions. It would be interesting to find the intersection of these two classes of polytopes.
\end{remark}
\begin{remark}
While we show that \posetdash associahedra $\Ass(P)$ exist as convex polytopes, we do not construct any explicit geometric realization of $\Ass(P)$ as a polytope with, say, integer vertex coordinates. Doing so remains an open problem.\footnote{Note added in 2023: This problem has now been solved in~\cite{Sack}.} Another interesting problem is to describe the $f$- and $h$-vectors of $\Ass(P)$ in terms of the combinatorics of $P$. 
\end{remark}

\begin{question}\label{que:Pasha}
In~\cite{LP_linear}, it was shown that graph associahedra of~\cite{CaDe} arise as dual cluster complexes of \emph{Laurent phenomenon algebras}~\cite{LP_LP}, which are certain generalizations of \emph{cluster algebras}~\cite{FZ}. Is there a relationship between \posetdash associahedra and dual cluster complexes of cluster algebras or of Laurent phenomenon algebras?
\end{question}
\noindent Another possible direction would be to relate \posetdash associahedra to $\tau$-tilting complexes of gentle algebras~\cite{PPPP}.

\subsection{Compactifications}\label{sec:intro:compact}
We explain how \posetdash associahedra may be obtained as compactifications of configuration spaces of points on a line. When $P$ is a chain, our construction recovers the case of Stasheff associahedra obtained as Axelrod--Singer compactifications~\cite{AxSi}; see also~\cite{FuMa}.

Recall that the \emph{order polytope}~\cite{Stanley_two} of $P$ is the space of order-preserving maps $P\to[0,1]$. We modify this construction to consider order-preserving maps $P\to\R$ instead. Let $\Sim$ be the group acting on $\R^P$ by \emph{rescalings} $\bx\mapsto \la\bx$ for $\la\in\Rtp$ and \emph{constant shifts} $\bx\mapsto \bx+\mu(1,1,\dots,1)$ for $\mu\in\R$. We let
\begin{equation}\label{eq:intro:Oo_dfn}
  \Oo(P):=\{\bx\in\R^P\mid x_i<x_j\text{ for all $i\lp j$}\}/\Sim
\end{equation}
denote the \emph{$P$-configuration space}. 
It is not hard to see (cf. \cref{sec:order_polyt}) that $\Oo(P)$ is naturally identified with the interior of a $(|P|-2)$-dimensional polytope denoted $\Ord(P)$. The faces of $\Ord(P)$ are indexed by \pipings $\T$ which are simultaneously set partitions of $P$.
 If $P$ is \emph{bounded}, i.e., contains a maximal and a minimal element, then $\Ord(P)$ is projectively equivalent to Stanley's order polytope; see \cref{rmk:Stanley}.

\begin{figure}
  \setlength{\tabcolsep}{0pt}
\makebox[1.0\textwidth]{
\scalebox{0.94}{
\begin{tabular}{c|c|c}
\begin{tikzpicture}[baseline=(Z.base)]
\coordinate(Z) at (0,-3);
\node(A) at (0,0){
\begin{tabular}{c}
  \includegraphics[width=0.15\textwidth]{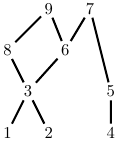}\ \strut \\ \\
(a) Poset $P$\\\\\hline \\
\includegraphics[width=0.15\textwidth]{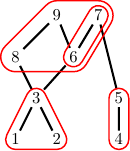}\ \strut
\end{tabular}
};
\end{tikzpicture}\hspace{-0.05in}
& \  \includegraphics[height=0.5\textwidth]{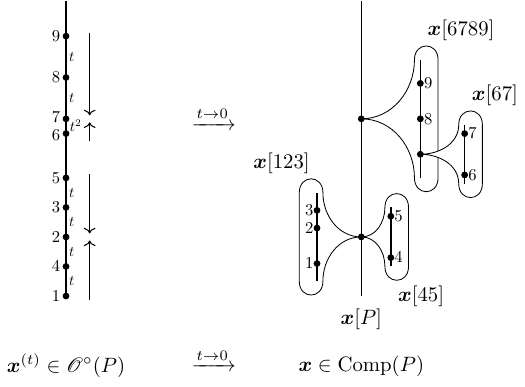} 
& \  \includegraphics[width=0.22\textwidth]{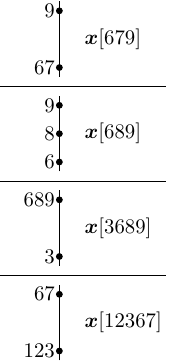}
\\
(b) \Piping $\T$ & (c) Taking a limit $\bx^{(t)}\xrightarrow{t\to0}\bx$ in $\Comp(P)$ & \ (d) $\bx[\t]$ for $\t\notin\T$
\end{tabular}
}
}
  \caption{\label{fig:comp} Defining the compactification $\Comp(P)$; see \cref{ex:comp}.}
\end{figure}

We will consider a certain compactification of $\Oo(P)$ which we first describe informally. See \cref{fig:comp} and \cref{ex:comp}. An element  $\bx\in\Oo(P)$ is a collection of $|P|$ points on a line satisfying the inequalities in~\eqref{eq:intro:Oo_dfn}. Allowing some (but not all, in view of the action of $\Sim$) of the points to collide, we obtain a point $\bx\in\Ord(P)$, which belongs to a face labeled by some set partition $\T_0=\{\t_1,\t_2,\dots,\t_m\}$ of $P$ into $m\geq2$ disjoint \Pdash \pipes. Thus all points in each \pipe $\t_j$ have collided, and moreover, it could be that all points in, say, $\t_1\sqcup\t_2$ have collided. During the collision, we keep track of the ``ratios of distances'' between all pairs of points inside each individual $\t_j$ (however, the distances between pairs of points in $\t_i\times\t_j$ for $i\neq j$ are ignored). In the limit, this gives a point $\bx\tbr[\t_j]\in \Ord(\Px[\t_j])$ for each $j=1,2,\dots,m$, where $\Px[\t_j]$ is treated as a connected subposet $(\t_j,\leqp)$ of $P$. We iterate this construction: the point $\bx\tbr[\t_j]$ belongs to some face of $\Ord(\Px[\t_j])$ labeled by a partition of $\t_j$ into disjoint \Pdash \pipes, so we record the distance ratios between pairs of points in each of those \pipes, etc. At the end, we obtain a collection $\T(\bx)$ of \Pdash \pipes which form a \Pdash \piping, and for each \Pdash \pipe $\t\in\T(\bx)$, we have a point $\bx[\t]\in\Ord(\Px[\t])$.

The non-rigorous part in the above paragraph is the notion of ``ratios of distances'' that we keep track of when the points collide. While such ratios are an essential ingredient in the definition of the Axelrod--Singer compactification~\cite{AxSi,Sinha,LTV}, we found that this approach cannot be directly applied to poset configuration spaces: see \cref{ex:bad_triples}. Instead, we utilize a new construction which we now describe formally. 

For a point $\bx\in\Oo(P)$ and a \pipe $\t$, let $\bx\tbr[\t]\in\Oo(\Px[\t])$ be the \emph{restriction} of $\bx$ to $\t$, i.e., the image of $\bx$ under the standard projection $\R^P\to\R^\t$. (This projection is $\Sim$-equivariant.) 
 Recall that $\Oo(\Px[\t])$ is identified with the interior of the order polytope $\Ord(\Px[\t])$. Consider the composite \emph{restriction map}
\begin{equation}\label{eq:intro:embP}
  \ResP: \Oo(P) \to \prod_{|\t|>1} \Ord(\Px[\t]),\quad \bx\mapsto (\bx\tbr[\t])_{|\t|>1},
\end{equation}
where the product is taken over \Pdash \pipes $\t$ satisfying $|\t|>1$. (This includes $\t=P$.)
\begin{definition}[\emph{$P$-compactification}]\label{dfn:intro:Comp}
Let $\Comp(P)$ denote the closure 
\begin{equation*}%
  \Comp(P):=\overline{\ResP(\Oo(P))}.
\end{equation*}
\end{definition}
Thus a point $\bx\in\Comp(P)$ is a collection $(\bx[\t])_{|\t|>1}\in\prod_{|\t|>1} \Ord(\Px[\t])$ of points in various order polytopes. We refer to the coordinates of $\bx[\t]$ as $(x_i[\t])_{i\in\t}$. We outlined above a recursive way to associate a proper \Pdash \piping $\T(\bx)$ to each such point $\bx\in\Comp(P)$; see \cref{dfn:bx_to_tubing} for further details. This endows $\Comp(P)$ with the structure of a stratified space, where the strata are indexed by proper \pipings.

\begin{example}\label{ex:comp}
Consider the poset $P$ in \figref{fig:comp}(a). For small $t>0$, the point $\bx^{(t)}$ shown on the left in \figref{fig:comp}(c) belongs to the $P$-configuration space $\Oo(P)$. When we take the limit as $t\to0$, we obtain a point $\bx\in\Comp(P)$, described as follows. The points $1,2,3,4,5$ collide, as do the points $6,7,8,9$, thus $\bx[P]\in\Ord[P]$ satisfies $x_1[P]=x_2[P]=\dots=x_5[P]$ and $x_6[P]=\dots=x_9[P]$. The set $\{1,2,3,4,5\}$ is a union of two \pipes, and the corresponding two points $\bx[123]\in\Ord(123)$ and $\bx[45]\in\Ord(45)$ are among those shown on the right in \figref{fig:comp}(c). Here we abbreviate $123=\{1,2,3\}$, etc. The two to one ratio of distances between the points $1,2$ and $2,3$ is encoded in the coordinates of $\bx[123]$. Similarly, the point $\bx[6789]\in\Ord(6789)$ satisfies $x_6[6789]=x_7[6789]$, but we have $x_6[67]<x_7[67]$. The \pipes $123, 45, 6789, 67$ form a proper \piping $\T:=\T(\bx)$ which labels (cf. \cref{dfn:bx_to_tubing}) the stratum of $\Comp(P)$ containing $\bx$. This \piping is shown in \figref{fig:comp}(b). By definition, to specify a point $\bx\in\Comp(P)$, one needs to specify a point $\bx[\t]\in\Ord(P)$ for \emph{any} \pipe $\t$, including the case $\t\notin\T$. Some of such points $\bx[\t]$ are shown in \figref{fig:comp}(d). As we explain in \cref{lemma:Coh_reconstruct}, it actually suffices to only specify the points $\bx[\t]$ for $\t\in\PT$.
\end{example}

\begin{theorem}\label{thm:intro:comp}
  There exists a stratification-preserving homeomorphism $\Ass(P)\xrasim\Comp(P)$.
\end{theorem}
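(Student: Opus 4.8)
The plan is to build the homeomorphism $\Ass(P) \xrasim \Comp(P)$ by matching both sides with a common combinatorial-geometric model, namely by showing that $\Comp(P)$ is a regular CW-complex (in fact a compact manifold with corners) whose face poset is anti-isomorphic to $\KAss(P)$ — equivalently, isomorphic to the face poset of the simple polytope $\Ass(P)$ — and then invoking a standard uniqueness principle: two regular CW-decompositions of homeomorphic spaces with isomorphic face posets are cellularly homeomorphic, and more to the point, any simple polytope is homeomorphic (as a stratified space) to any regular CW-ball with the same face poset. Concretely, first I would make precise the map $\bx \mapsto \T(\bx)$ of \cref{dfn:bx_to_tubing} and verify that the fibers (strata) $\Comp(P)_\T := \{\bx \in \Comp(P) \mid \T(\bx) = \T\}$, as $\T$ ranges over proper $P$-tubings, partition $\Comp(P)$, with $\Comp(P)_\emptyset = \ResP(\Oo(P)) \cong \Oo(P)$ the open top stratum of dimension $|P|-2$.

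The heart of the argument is a local product structure. Fix a proper tubing $\T = \{\t_1 \supsetneq \cdots\}$ together with the "contraction poset" structure it induces: collapsing each maximal tube of $\T$ inside its parent produces a smaller poset, and iterating gives a forest-like recursion. I would prove that a neighborhood in $\Comp(P)$ of a point $\bx$ with $\T(\bx)=\T$ is homeomorphic to
\begin{equation*}
  \Oo(P/\T) \times \prod_{\t \in \T} \Comp(\Px[\t]\,/\,\T|_\t) \;\cong\; \Rtnn^{|\T|} \times \big(\text{open ball}\big),
\end{equation*}
where $P/\T$ denotes the poset obtained by contracting the maximal tubes of $\T$ and $\T|_\t$ the induced tubing on $\t$; the key point is that the "ratios of distances inside $\t$" recorded by $\bx[\t]$ vary independently of the relative positions of the collided blocks, which is exactly what the restriction-map definition \eqref{eq:intro:embP} buys us (and is why the naive Axelrod–Singer triples fail, cf. \cref{ex:bad_triples}). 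Establishing that this local model glues correctly — i.e. that $\T(\bx')$ for $\bx'$ near $\bx$ is obtained from $\T$ by \emph{deleting} some tubes, and that each deletion corresponds to opening up one $\Rtnn$-coordinate — is what shows the stratification is a manifold-with-corners stratification with the face poset of $\KAss(P)$. By \cref{thm:poset_ass} this face poset is realized by the simple polytope $\Ass(P)$, and a compact connected manifold with corners whose face poset is that of a simple polytope, with all faces contractible, is homeomorphic to that polytope by the standard recognition theorem for polytopal CW-spheres/balls.

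Carrying this out requires two technical inputs I would establish first. First, a \emph{reconstruction lemma} (this is \cref{lemma:Coh_reconstruct}, asserted in the excerpt): a point of $\Comp(P)$ is determined by its coordinates $\bx[\t]$ for $\t \in \PT$, i.e. the ambient product $\prod_{|\t|>1}\Ord(\Px[\t])$ is highly redundant and the closure $\Comp(P)$ is cut out by explicit compatibility relations between $\bx[\t]$ and $\bx[\t']$ when $\t \subsetneq \t'$. This reduces the dimension count to $|P|-2$ and gives the local charts above honest coordinates. Second, a \emph{limit-existence lemma}: every sequence of distance-ratio data converging in each factor actually arises as a limit of points of $\Oo(P)$ compatible across all tubes simultaneously — this is where connectedness of $P$ and the acyclicity condition on $D_\T$ enter, guaranteeing one can always consistently "spread out" a collided configuration.

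The main obstacle I anticipate is precisely the gluing/compatibility analysis across different tubes: because the union of two overlapping tubes need not be a tube, the recursion tree of a tubing is not simply a nested family, and one must check that the locally defined product charts agree on overlaps and that the closure operation does not introduce extra, lower-dimensional "phantom" pieces or fail to be locally Euclidean-with-corners along deep strata. Concretely, the delicate step is showing that when several tubes of $\T$ are pairwise disjoint but share a common parent, the independence of their internal ratio-data persists in the closure — i.e. that $\Comp(P)$ near such a stratum is genuinely a product and not some quotient or blow-down thereof. I would handle this by an explicit parametrization: scale the configuration so the parent block has diameter $1$, write the positions of sub-blocks in terms of a point of $\Oo(\text{parent}/\T)$, and let the intra-block scales go to $0$ along independent rays, checking directly that the restriction maps $\bx \mapsto \bx[\t]$ have the asserted limits and that these limits range over exactly $\prod \Comp(\Px[\t])$. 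Once the local structure is pinned down, the global statement follows formally from \cref{thm:poset_ass}.
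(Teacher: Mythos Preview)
Your proposal is essentially correct and follows the same architecture as the paper: stratify $\Comp(P)$ by proper tubings, establish local manifold-with-corners charts near each stratum, match the face poset with that of $\Ass(P)$, and conclude by a recognition argument. The paper's substantive technical contribution---which you gesture at in your ``local product structure'' and ``gluing/compatibility'' paragraphs but do not pin down---is the explicit construction of \emph{expanding} and \emph{collapsing} homeomorphisms $\ex_{\t,\tp}$, $\coll_{\t,\tp}$ between open sets $\Ex(\t,\tp)\subset\Adj(\t,\tp)\times[0,\infty)$ and $\Coll(\t,\tp)\subset\Comp(P)$, one for each adjacent pair of tubes; iterating these over $\T\setminus\T'$ produces the chart $U\times[0,\eps)^{|\T\setminus\T'|}$ you anticipate. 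This is precisely where the ``independence of intra-block ratio data'' you worry about is verified, and the paper's formulas~\eqref{eq:ex_dfn_z}--\eqref{eq:coll_y_to_z} are the concrete content of your ``scale the parent block to diameter $1$ and let sub-block scales go to $0$'' sketch.

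One point to sharpen: the ``standard recognition theorem for polytopal CW-spheres/balls'' you invoke is not elementary. The paper establishes only that each cell closure $\Closure(\D_\T)$ is a topological manifold with boundary (the corner chart $U\times[0,\eps)^m$ is merely homeomorphic to a half-space), and then argues by induction on cell dimension that each $\Closure(\D_\T)$ is a closed ball, using Bj\"orner's result that the boundary is a sphere together with the \emph{generalized Poincar\'e conjecture} (Smale--Freedman--Perelman). Your phrase ``follows formally'' undersells this step; there is no cheaper route known, and in particular manifold-with-corners structure plus face poset plus contractible faces does not by itself furnish a homeomorphism to the polytope without this deep input.
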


\subsection{Affine \aposetdash cyclohedra}\label{sec:intro:affine}
We now describe affine versions of the above constructions, which have served as the original motivation for this work; see \cref{rmk:crit}.
\begin{definition}\label{dfn:intro:aff_poset}
An \emph{affine poset (of order $n\geq1$)}  is a poset $\Pa=(\Z,\leqPa)$ such that:
\begin{itemize}%
\item for all $i\in\Z$, $i\lPa i+n$;
\item for all $i,j\in\Z$, $i\leqPa j$ if and only if  $i+n\leqPa j+n$;%
\item $\Pa$ is \emph{strongly connected}: for all $i,j\in\Z$, we have $i\leqPa j+kn$ for some $k\geq0$.
\end{itemize}
\end{definition}
\noindent We denote the order of $\Pa$ by $|\Pa|:=n$.

A \emph{($\Pa$-)\pipe} is a convex connected nonempty subset of $\Z$ which either equals to $\Pa$ or contains at most one element in each residue class modulo $n$. Thus if $\t$ is a \Padash \pipe then so is $\t+dn$ for any $d\in\Z$, where we set $\t+dn:=\{i+dn\mid i\in\t\}$. We say that the \pipes $\t$ and $\t+dn$ are \emph{equivalent}, and let $\eq[\t]:=\{\t+dn\mid d\in\Z\}$ denote the equivalence class of $\t$. A collection of \pipes is called \emph{$n$-periodic} if it is a union of such equivalence classes.

A \emph{($\Pa$-)\piping} is an $n$-periodic collection $\T$ of \Padash \pipes such that any two \pipes in $\T$ are either nested or disjoint, and such that the directed graph $D_\T$ given by~\eqref{eq:intro:acyclic} is acyclic. 
 A \pipe $\t$ is called \emph{proper} if it satisfies $|\t|>1$ and $\t\neq \Pa$. A \piping is called \emph{proper} if it consists of proper \pipes. Observe that each \piping is a disjoint union of finitely many equivalence classes of \pipes. We let $\KCyc(\Pa)$ be an abstract simplicial complex whose vertices correspond to equivalence classes of proper \Padash \pipes, and whose simplices correspond to proper \Padash \pipings.
\begin{theorem}[Affine \aposetdash cyclohedron]\label{thm:aff_poset_cyc}
There exists a simplicial $(|\Pa|-1)$-dimensional polytope $\Cyc(\Pa)^\ast$ whose boundary complex is isomorphic to $\KCyc(\Pa)$.
\end{theorem}

\begin{figure}
  \makebox[1.0\textwidth]{
\scalebox{0.94}{
\ \ \quad\begin{tabular}{cccc|cccc}
 \includegraphics[height=0.4\textwidth]{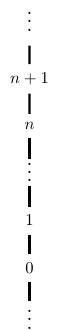}& \qquad&
  \includegraphics[width=0.35\textwidth]{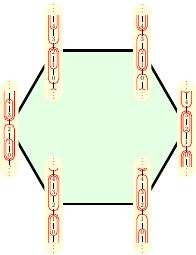} 
& \qquad&\qquad & 
 \includegraphics[height=0.3\textwidth]{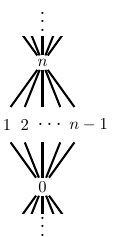}& \hspace{-0.3in}
\includegraphics[width=0.45\textwidth]{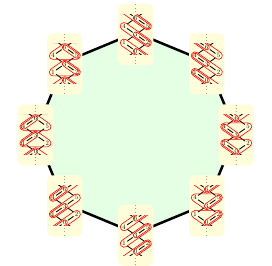}\\ &&&&&&& \\
$\Pa$& & $\Cyc(\Pa)=$ cyclohedron & & & $\Pa$ & $\Cyc(\Pa)=$ type $B$ permutohedron
\end{tabular}
}
}
  \caption{\label{fig:aff_cyc}If $\Pa$ is a circular chain (left) then $\Cyc(\Pa)$ is the cyclohedron. If $\Pa$ is a circular claw (right) then $\Cyc(\Pa)$ is the type $B$ permutohedron.}
\end{figure}

\noindent We define the \emph{affine \aposetdash cyclohedron} $\Cyc(\Pa)$ as the polar dual to $\Cyc(\Pa)^\ast$. See \cref{cor:aff_poset_cyc_properties} for a list of its properties. It is a simple $(|\Pa|-1)$-dimensional polytope whose vertices correspond to proper \pipings consisting of $|\Pa|-1$ equivalence classes of \pipes, and whose facets correspond to equivalence classes of proper \pipes. Each face of $\Cyc(\Pa)$ is a product of smaller \posetdash associahedra and affine \aposetdash cyclohedra. When $\Pa$ is a \emph{circular chain} shown in \figref{fig:aff_cyc}(left) (resp., a \emph{circular claw} shown in \figref{fig:aff_cyc}(right)), $\Cyc(\Pa)$ is combinatorially equivalent to the cyclohedron (resp., to the type $B$ permutohedron) of dimension $|\Pa|-1$. Since the cyclohedron is a type $B$ analog of the associahedron~\cite{Simion}, we may think of affine posets as type $B$ analogs of finite posets.

Finally, we explain how affine \aposetdash cyclohedra arise as compactifications. 
 Fix some constant $c\in\Rtp$. We identify points $\bx\in\R^{|\Pa|}$ with infinite sequences $\bxt=(\xt_i)_{i\in\Z}$ satisfying $\xt_{i+n}=\xt_i+c$ for all $i\in\Z$. Let the group $\R(1,1,\dots,1)$ act on $\R^{|\Pa|}$ by constant shifts. Set
 \begin{equation}\label{eq:intro:Orda}
\begin{aligned}%
  \Oao(\Pa)&:=\{\bx\in\R^{|\Pa|}/\R(1,1,\dots,1)\mid \xt_i<\xt_j\text{ for all $i\lpa j$}\},\\ 
\Orda(\Pa)&:=\{\bx\in\R^{|\Pa|}/\R(1,1,\dots,1)\mid \xt_i\leq\xt_j\text{ for all $i\leqpa j$}\}.
\end{aligned}
\end{equation}
We show in \cref{cor:Orda_nonempty} that $\Orda(\Pa)$ is a nonempty polytope of dimension $|\Pa|-1$. We call it the \emph{affine order polytope} of $\Pa$.

Given a point $\bx\in\Oao(\Pa)$ and a \pipe $\t$ with $|\t|>1$, we may still consider the restriction $\bx\tbr[\t]\in\Oo(\t)$ whose coordinates are given by $(\xt_i)_{i\in\t}$. (Recall that $\t=\Pa$ is considered a \pipe, in which case we set $\bx\tbr[\t]:=\bx$.) When two \pipes $\t,\t'$ are equivalent, we have $\bx\tbr[\t]=\bx\tbr[\t']$. 
We thus get a map
\begin{equation*}%
  \ResPa: \Oao(\Pa)\to\bar{\prod_{|\t|>1}} \Ord(\Pax[\t]),\quad \bx\mapsto(\bx\tbr[\t])_{|\t|>1}.
\end{equation*}
Here $\prodb_{|\t|>1} \Ord(\Pax[\t])$ is the set of points $(\bx[\t])_{|\t|>1}\in\prod_{|\t|>1} \Ord(\Pax[\t])$ satisfying $\bx[\t]=\bx[\t']$ whenever two \pipes $\t,\t'$ are equivalent. Thus essentially the product $\prodb_{|\t|>1} \Ord(\Pax[\t])$ is taken over finitely many equivalence classes $\eq[\t]$ of \pipes $\t$ satisfying $|\t|>1$, including the case $\t=\Pa$. For $\t\neq\Pa$, $\Ord(\Pax[\t])$ is the order polytope associated to the finite connected subposet $(\t,\leqpa)$ of $\Pa$.
 We consider the closure
\begin{equation}\label{eq:intro:Comp_Pa}
    \Comp(\Pa):=\overline{\ResPa(\Oao(\Pa))}.
\end{equation}
Similarly to the case of \posetdash associahedra, $\Comp(\Pa)$ admits a stratification into pieces indexed by proper \pipings.
\begin{theorem}
  There exists a stratification-preserving homeomorphism $\Cyc(\Pa)\xrasim\Comp(\Pa)$.
\end{theorem}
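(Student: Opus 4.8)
The plan is to deduce this last theorem from its finite analogue \cref{thm:intro:comp} together with \cref{thm:aff_poset_cyc}, by setting up the affine versions of all the combinatorial and topological gadgets used in the finite case and checking that the proofs go through verbatim up to the modifications forced by $n$-periodicity. First I would establish that $\Comp(\Pa)$ is a compact space carrying a well-defined stratification by proper \Padash-tubings: starting from a point $\bx\in\Comp(\Pa)=\overline{\ResPa(\Oao(\Pa))}$, one runs the same recursive ``collision-detection'' procedure as in the finite case (\cref{dfn:bx_to_tubing}) on each tube $\t$ with $|\t|>1$, using the coordinate point $\bx[\t]\in\Ord(\Pax[\t])$; the output is a collection of \Padash-tubes, and one checks it is $n$-periodic (because $\bx[\t]=\bx[\t+n]$ by construction, so the procedure commutes with the shift $i\mapsto i+n$) and that it forms a proper \Padash-tubing in the sense of \cref{sec:intro:affine}. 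Acyclicity of $D_\T$ should follow, as in the finite case, from the fact that each $\bx[\t]$ lies in the order polytope $\Ord(\Pax[\t])$, whose faces are indexed by tubings that are set partitions with $D$ acyclic; here one uses \cref{cor:Orda_nonempty} to know $\Orda(\Pa)$ is an honest $(|\Pa|-1)$-dimensional polytope so that the top tube $\t=\Pa$ behaves like the other tubes.

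Next I would prove a reconstruction lemma — the affine analogue of \cref{lemma:Coh_reconstruct} — stating that a point $\bx\in\Comp(\Pa)$ is determined by the finitely many coordinates $\bx[\t]$ for $\t\in\PTubing$ (one representative per equivalence class), and conversely that any coherent such collection arises from a point of $\Comp(\Pa)$. This is the technical heart that lets one identify $\Comp(\Pa)$ with a space glued from cells $\prod \Oo(\Pax[\t])$ over proper tubings, matching the face structure of the abstract polytope $\Cyc(\Pa)$ from \cref{thm:aff_poset_cyc}. Then I would construct the homeomorphism $\Cyc(\Pa)\xrasim\Comp(\Pa)$ stratum by stratum: the open stratum $\Oao(\Pa)$ maps to the interior of $\Cyc(\Pa)$; each boundary stratum indexed by a proper tubing $\T$ is, via the reconstruction lemma, a product $\prod_{[\t]\in\T/\sim}\Oo(\Pax[\t])\times\Oao(\Pa/\T)$-type configuration space, which by the finite result \cref{thm:intro:comp} (applied to each finite subposet $\t$) and an induction on $|\Pa|$ for the ``quotient'' affine piece is identified with the corresponding face of $\Cyc(\Pa)$. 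One assembles these into a global homeomorphism by checking compatibility along the closure relations, exactly as one does to prove \cref{thm:intro:comp}.

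The main obstacle I anticipate is the top-dimensional/``global'' tube $\t=\Pa$, which has no finite analogue: in the finite case $\t=P$ is just the largest tube and $\Ord(P)$ is an ordinary order polytope, but here $\Ord(\Pax[\Pa])$ must be replaced by the \emph{affine} order polytope $\Orda(\Pa)$ living in $\R^{|\Pa|}/\R(1,\dots,1)$, whose geometry (convexity, dimension, face lattice) is genuinely new and only handled by \cref{cor:Orda_nonempty}. Relatedly, the recursive collision procedure must be shown to terminate and produce a \emph{proper} tubing — i.e. collisions never force the whole of $\Pa$ to ``collapse'' in a way not already quotiented out by $\R(1,\dots,1)$ — which amounts to controlling how the periodicity constraint $\xt_{i+n}=\xt_i+c$ interacts with the rescaling freedom that is present in the finite case but \emph{absent} here (the affine setting only has shifts, not rescalings, so the bookkeeping of ``distance ratios'' near $\t=\Pa$ is different). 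I expect the induction to be organized so that once one passes to a proper tube $\t\subsetneq\Pa$, that tube is a finite connected poset and everything reduces to the already-proven finite theory; the affine subtleties are concentrated entirely at the outermost layer.
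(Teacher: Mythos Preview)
Your outline is broadly on target in spirit---set up the affine analogues of coherent collections, the tubing $\T(\bx)$, the reconstruction lemma, and then assemble---and you correctly single out the top tube $\t=\Pa$ (affine order polytope, no rescaling freedom) as the one genuinely new feature. However, the proposed shortcut of invoking \cref{thm:intro:comp} as a black box together with induction on $|\Pa|$ does not do the work you want it to, and this is where your proposal diverges from the paper.

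The issue is this: knowing that $\Comp(\t)\cong\Ass(\t)$ for each finite subposet $\t$ tells you nothing about how the cell $\D_\T$ sits inside $\Comp(\Pa)$. To assemble stratum-wise homeomorphisms into a global one, you need to know that each closure $\overline{\D_\T}$ (with its subspace topology in $\Comp(\Pa)$) is a closed ball. You seem to want to get this from a product decomposition $\overline{\D_\T}\cong\prod\Comp(\t/\HAT[\t])$, but that homeomorphism is not free---proving it is essentially equivalent to proving the local manifold-with-boundary structure of $\Comp(\Pa)$ near $\D_\T$, which is exactly the content of the collapsing/expanding maps in \cref{sec:collapse_expand}. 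Your final clause ``exactly as one does to prove \cref{thm:intro:comp}'' is thus carrying the entire weight of the argument, and the inductive invocation of the finite theorem is redundant.

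The paper's proof simply repeats all of \cref{sec:poset_compact} with $P$ replaced by $\Pa$: the definitions of $\Coh(\Pa)$, $\T(\bx)$, $\D_\T$, and the maps $\ex_{\t,\tp},\coll_{\t,\tp}$ go through verbatim once one works with equivalence classes of tubes and replaces $\prod_{|\t|>1}\Ord(\t)$ by $\prodb_{|\t|>1}\Ord(\t)$. The only nontrivial checks are that $\HAT(\bx)$ is an infinite $n$-periodic rooted tree, that the cell dimension formula becomes $|\Pa|-|\EQ[\T]|-1$, and that when $\tp=\Pa$ the sets $\ijP,\jiP$ in~\eqref{eq:ijP_jiP} are infinite but only finitely many inequalities are active in~\eqref{eq:tmax_dfn}--\eqref{eq:Coll_dfn}. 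After that, the manifold-with-boundary argument and the generalized Poincar\'e step proceed unchanged. So the right organization is not ``reduce to the finite case'' but rather ``rerun the finite proof with periodic bookkeeping''; your instinct that the affine subtleties are concentrated at the outermost layer is correct, but they are resolved by redoing the local analysis there, not by an induction that bypasses it.
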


\begin{figure}
\begin{tabular}{ccc}
\includegraphics[width=0.45\textwidth]{fig_polyt/aff_claw_oct}  & &
\includegraphics[width=0.42\textwidth]{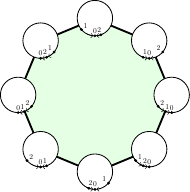} \\ \\
$\Cyc(\Pa)$ && $\Comp(\Pa)$
\end{tabular}
  \caption{\label{fig:circles} $\Cyc(\Pa)$ as a compactification of the $\Pa$-configuration space of points on a circle. See \cref{ex:circles}.}
\end{figure}
\begin{remark}
The quotient $\R/c\Z$ is homeomorphic to a circle $S^1$. Thus $\Oao(\Pa)$ may be considered as a configuration space of $|\Pa|$ points on $S^1$ (modulo global rotations of $S^1$) such that the points comparable in $\Pa$ are not allowed to pass through each other. When we take the closure in~\eqref{eq:intro:Comp_Pa}, we allow some (possibly all) of the points to collide. During the collisions, we keep track of the ratios of distances recursively as we did in \cref{sec:intro:compact}. In particular, when the points belonging to some \pipe $\t\neq\Pa$ collide, the relative distances between them are described by a point $\bx\tbr[\t]$ in the order polytope $\Ord(\Pax[\t])$ (as opposed to an affine order polytope). This is consistent with the fact that a circle is locally homeomorphic to a line.
\end{remark}

\begin{example}\label{ex:circles}
Suppose that $\Pa$ is a circular claw as in \figref{fig:aff_cyc}(right) of order $|\Pa|=3$. We may view $\Oao(\Pa)$ as the configuration space of three points labeled $0,1,2$ moving on a circle so that $1$ and $2$ can pass through each other, but neither $1$ nor $2$ can pass through $0$. %
 Consider the octagon in \figref{fig:circles}(right). Each vertex is labeled by a circle with points $0,1,2$ on it. We  view each such circular configuration as a limit as $t\to0$ of a family of configurations where the distance between $0$ and the closest point is $t^2$ while the distance between $0$ and the farthest point is $t$. In the limit as $t\to0$, it yields a point in $\Comp(\Pa)$ which corresponds to a vertex of $\Cyc(\Pa)$. This correspondence is illustrated in \cref{fig:circles}. 
\end{example}

\begin{remark}\label{rmk:crit}
Affine posets relevant to critical varieties are constructed as follows. Choose a permutation $f\in S_n$. Place $n$ vertices on a circle labeled $1,2,\dots,n$ in clockwise order. For each $s\in[n]:=\{1,2,\dots,n\}$, draw an arrow $s\to i$ whenever $i=f(s)$. The arrow starts slightly after $s$ and terminates slightly before $i$ in clockwise order; see \figref{fig:crit}(left). Assuming the resulting union of $n$ arrows is topologically connected, the affine poset $\Paf$ is defined as the $n$-periodic transitive closure of the relations
\begin{equation*}%
  i\lpaf j\lpaf i+n
\end{equation*}
for all $1\leq i<j\leq n$ such that the arrows $s\to i$ and $t\to j$ cross; see \figref{fig:crit}(right). Setting $c:=\pi$, the $\Paf$-configuration space $\Oao(\Paf)$ defined in~\eqref{eq:intro:Orda} coincides with the space $\THtp_f$ of \emph{$f$-admissible tuples} which parameterizes the \emph{critical cell $\Ctp_f$}; see~\cite[Definition~1.6]{crit}. As we show in~\cite[Theorem~4.1]{crit_tnn}, the affine \aposetdash cyclohedron $\Cyc(\Paf)$ admits a surjective continuous map onto the \emph{totally nonnegative critical variety} $\Ctnn_f$, defined as the closure of $\Ctp_f$ inside the Grassmannian.
\end{remark}

\begin{figure}
\begin{tabular}{ccc}
\includegraphics[width=0.25\textwidth]{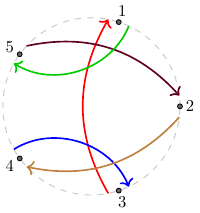}  & &
\includegraphics[width=0.25\textwidth]{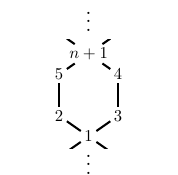} \\ \\
Strand diagram of $f\in S_n$ && Affine poset $\Paf$
\end{tabular}
  \caption{\label{fig:crit} Associating an affine poset $\Paf$ (right) to a strand diagram of a permutation $f\in S_n$ (left).}
\end{figure} 

\subsection*{Acknowledgments}
I am grateful to Dev Sinha for clarifications regarding some constructions in~\cite{Sinha}, which provided indispensable tools for the proof of \cref{thm:intro:comp} in \cref{sec:poset_compact}. I also thank Pasha Pylyavskyy for his initial contributions to~\cite{crit} and for bringing the relevant results of~\cite{LP_linear,CPY} to my attention; see \cref{que:Pasha} and \cref{rmk:Pasha}. Finally, I am grateful to the anonymous referees for their valuable suggestions, and in particular for their input regarding terminology choices such as \emph{\posetdash associahedra} and \emph{\pipings}.

\section{\Posetdash associahedra}

\subsection{Order cones and polytopes}\label{sec:order_polyt}
We start by collecting several simple results on order polytopes. %
 Let $P$ be a finite connected poset with $|P|\geq2$. First, rather than taking the quotient modulo the group $\Sim$ of rescalings and constant shifts, we would like to define $\Ord(P)$ as an explicit subset of $\R^P$. Let $\RSZ^P$ denote the subspace of $\R^P$ where the sum of coordinates is zero. Define a linear function $\al_P$ on $\R^P$ by
\begin{equation}\label{eq:al_dfn}
  \al_P(\bx):=\sum_{i\precdot_P j} x_j-x_i.
\end{equation}
Here the sum is taken over the covering relations $i\precdot_P j$ in $P$. We are ready to define the \emph{order cone $\COrd(P)$}, the \emph{order polytope $\Ord(P)$}, and their interiors:
\begin{align*}
\COrd(P)&:=\{\bx\in\RSZ^P\mid x_i\leq x_j\text{ for all $i\leqp j$}\}, & \Ord(P)&:=\{\bx\in\COrd(P)\mid \al_P(\bx)=1\};\\ 
\Coo(P)&:=\{\bx\in\RSZ^P\mid x_i< x_j\text{ for all $i\lp j$}\}, &\Oo(P)&:=\{\bx\in\Coo(P)\mid \al_P(\bx)=1\}.
\end{align*}
The definition and some basic properties of $\COrd(P)$ may be found e.g. in~\cite{PRW,JoSa}. Recall that a cone is called \emph{pointed} if it does not contain a line through the origin. Clearly, $\COrd(P)$ is a pointed polyhedral cone since for each $\bx\in \COrd(P)\setminus \{0\}$, we have $\al_P(\bx)>0$. Thus $\Ord(P)$ is a polytope of dimension $|P|-2$.

Next, we describe the faces of $\Ord(P)$. 
\begin{definition}
 A \emph{\piping partition} of $P$ is a \piping $\T$ which is simultaneously a set partition of $P$.
\end{definition}

Consider a point $\bx\in\Ord(P)$. Let $\tubes(\bx)$ be the collection of maximal by inclusion \pipes $\t$ such that we have $x_i=x_j$ for all $i,j\in\t$. Then $\tubes(\bx)$ is a \piping partition of $P$. Given an arbitrary \piping partition $\T$ of $P$, let
\begin{equation}\label{eq:tubes_dfn}
  \OFace(P,\T):=\{\bx\in\Ord(P)\mid \tubes(\bx)=\T\},
\end{equation}
and let $\OFacecl(P,\T)$ denote the closure of $\OFace(P,\T)$. For the coarsest \piping partition $\T=\{P\}$, let $\OFacecl(P,\T):=\emptyset$ denote the empty face of $\Ord(P)$. The following proposition is a straightforward extension of the results of~\cite{Stanley_two}.

\begin{proposition}\label{prop:Ord_faces}
The map $\T\mapsto \OFacecl(P,\T)$ is a bijection between \piping partitions $\Tubing$ of $P$ and faces of $\Ord(P)$. Face inclusion corresponds to refinement:
\begin{equation}\label{eq:Ord_faces}
  \OFacecl(P,\T)\subseteq\OFacecl(P,\T') \quad\Longleftrightarrow\quad \text{each $\tube'\in\Tubing'$ is contained in some $\tube\in\Tubing$}.
\end{equation}
The dimension of each face $\OFacecl(P,\T)$ equals $|\T|-2$.\qed
\end{proposition}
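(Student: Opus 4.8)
The plan is to reduce to Stanley's theorem on the order polytope~\cite{Stanley_two} and then transport his description of faces across the identification. First I would handle the case when $P$ has a minimal element $\hat 0$ and a maximal element $\hat 1$: then $\COrd(P)$ is (after adjoining the direction $(1,1,\dots,1)$ and rescaling so that $x_{\hat 0}=0$, $x_{\hat 1}=1$, which is exactly the content of the normalization $\al_P(\bx)=1$ and projecting out the line $\R(1,\dots,1)$) affinely isomorphic to Stanley's order polytope $\mathcal{O}(P\setminus\{\hat 0,\hat 1\})$; cf.\ \cref{rmk:Stanley}. Stanley shows that the faces of the order polytope correspond to partitions of the poset into convex connected "blocks" on which the coordinate is constant, with the quotient poset being a genuine poset (i.e.\ the associated relation is antisymmetric — acyclicity of $D_\T$), and that face containment is refinement. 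Unwinding the identification, a face of $\Ord(P)$ is cut out by turning some of the defining inequalities $x_i\le x_j$ into equalities; the maximal tubes on which $\bx$ is constant form exactly $\tubes(\bx)$, and the requirement that the result still lie in (the closure of) a genuine stratum is precisely the acyclicity of $D_{\tubes(\bx)}$ — hence $\tubes(\bx)$ is a tubing partition. For general connected $P$ with no global bounds, I would run the same argument directly on the pointed cone $\COrd(P)\subseteq\RSZ^P$: its faces correspond to subsets $S$ of the covering relations such that the equalities $\{x_i=x_j : (i,j)\in S\}$, together with the surviving inequalities, define a face; the equalities partition $P$ into convex connected blocks (convexity and connectedness are forced because we are equating along Hasse edges and taking transitive closure of "$=$"), and the face is nonempty of full expected dimension exactly when the induced relation on blocks has no directed cycle, i.e.\ $D_\T$ is acyclic.

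The key steps, in order, are: (1) observe $\COrd(P)$ is a pointed polyhedral cone (already done in the text via $\al_P>0$ on nonzero points), so faces of $\Ord(P)$ biject with nonzero faces of $\COrd(P)$; (2) show every nonzero face $F$ of $\COrd(P)$ has the form $\{x_i=x_j\ \forall i,j\in\t,\ \t\in\T\}\cap\COrd(P)$ for a unique set partition $\T$ into convex connected tubes, by taking $\T$ to be the common value of $\tubes(\bx)$ for $\bx$ in the relative interior of $F$ (this is constant on the relative interior because the set of tight inequalities is); (3) verify $D_\T$ is acyclic: a directed cycle $\t_1\to\t_2\to\dots\to\t_1$ in $D_\T$ forces, via~\eqref{eq:intro:acyclic} and transitivity of $\leqp$, a chain of inequalities $x$-values around the cycle that are all equalities on $F$, contradicting that distinct blocks of $\tubes(\bx)$ have... — actually the cleaner statement is the converse: if $D_\T$ is acyclic then there is $\bx\in\COrd(P)$ whose block-values are strictly increasing along a linear extension of $D_\T$, giving a point with $\tubes(\bx)=\T$, so the stratum $\OFace(P,\T)$ is nonempty; and if $D_\T$ has a cycle then no such $\bx$ exists, so $\T$ is not realized; (4) prove the containment/refinement statement~\eqref{eq:Ord_faces}: $F(\T')\subseteq F(\T)$ iff the tight-inequality set of $\T$ is contained in that of $\T'$ iff every block of $\T'$ is equated inside some block of $\T$ iff each $\t'\in\T'$ lies in some $\t\in\T$; (5) compute dimensions: $\dim F(\T)$ equals the number of blocks minus one minus the corank contribution, and since the quotient "poset" $D_\T$ is acyclic on $|\T|$ blocks with the single normalization $\al_P=1$ (equivalently $\sum x_i=0$ plus projective normalization), $\dim\OFacecl(P,\T)=|\T|-2$; the empty face corresponds to $\T=\{P\}$, consistent with $|\T|-2=-1$.

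For step (5) I would argue that $\COrd(P)$ restricted to the subspace where all $x_i$ with $i$ in the same block of $\T$ agree is exactly $\COrd(Q)$ where $Q$ is the poset on $\T$ with relations generated by $D_\T$ (this is a genuine poset precisely by acyclicity), and $\COrd(Q)\subseteq\RSZ^{\T}$ is a pointed cone of full dimension $|\T|$ when $Q$ is connected — connectedness of $Q$ follows from connectedness of $P$ — so the corresponding face of the polytope has dimension $|\T|-2$, and it is a facet of $\COrd(P)$'s face lattice relative to the blocks; the relative interior of this face is $\OFace(P,\T)$ by construction, which also shows the face is nonempty.

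The main obstacle I expect is step (3) together with the uniqueness in step (2): one must be careful that the partition read off from a face is genuinely into \emph{convex connected} tubes and that acyclicity of $D_\T$ is not just necessary but sufficient for $\OFace(P,\T)\neq\emptyset$. The subtlety is that equating coordinates along Hasse edges automatically produces convex connected blocks (a block is the equivalence class under "reachable by Hasse edges whose endpoints are equated"), but checking that \emph{every} face arises this way — rather than from some more exotic set of tight inequalities — uses that the facet-defining inequalities of $\COrd(P)$ are exactly the covering relations $x_i\le x_j$ for $i\precdot_P j$ (redundancy of non-covering relations), which is a standard fact about order cones~\cite{PRW,JoSa} that I would cite. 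Once that normal-form for faces is in hand, everything else is bookkeeping, and the dimension count and refinement statement follow formally, exactly paralleling~\cite{Stanley_two}.
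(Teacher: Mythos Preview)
The paper does not actually give a proof: the proposition is stated with a trailing \qed and introduced only as ``a straightforward extension of the results of~\cite{Stanley_two}.'' Your proposal is correct and is precisely the standard argument one would write out for this extension --- identifying faces of the pointed cone $\COrd(P)$ with sets of tight covering-relation inequalities, reading off the convex connected blocks, using acyclicity of $D_\T$ as the exact condition for the stratum to be nonempty (via a linear extension of the quotient), and computing the dimension via the quotient poset $P/\T$. One tiny slip: in step~(5) you write that $\COrd(Q)$ has ``full dimension $|\T|$,'' but since it sits inside $\RSZ^\T$ it has dimension $|\T|-1$; after the affine slice $\al_P=1$ (which is transverse since $\al_P>0$ on the cone) you then correctly get $|\T|-2$.
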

\begin{corollary}\ \label{cor:Ord_faces_properties}
\begin{enumerate}[label=\normalfont(\roman*)]
\item The vertices of $\Ord(P)$ are in bijection with partitions $P=I\sqcup F$ of $P$ into a connected nonempty order ideal $I$ and a connected nonempty order filter $F$.
\item The facets of $\Ord(P)$ are in bijection with the covering relations $i\precdot_P j$ in $P$.
\item Each face $\OFacecl(P,\T)$ of $\Ord(P)$ is itself an order polytope $\Ord(P/\T)$, where the quotient poset $P/\T$ is obtained from $P$ by identifying all elements of $P$ that belong to a single \pipe of $\T$.
\end{enumerate}
\end{corollary}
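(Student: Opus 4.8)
The plan is to deduce all three parts from \cref{prop:Ord_faces}, using that the faces of $\Ord(P)$ are precisely the sets $\OFacecl(P,\T)$ indexed by tubing partitions $\T$, with $\dim\OFacecl(P,\T)=|\T|-2$, with inclusion given by refinement, and with $\OFacecl(P,\T)$ equal to the set of $\bx\in\Ord(P)$ that are constant on each tube of $\T$ (this last description is implicit in the discussion preceding \cref{prop:Ord_faces}). Under this dictionary, vertices of $\Ord(P)$ correspond to tubing partitions $\T$ with $|\T|=2$, facets to those with $|\T|=|P|-1$, and part (iii) simply asks to recognize an arbitrary $\OFacecl(P,\T)$ as an order polytope.

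For (i), I would start from a two-block tubing partition $\T=\{\t_1,\t_2\}$. The only possible arcs of $D_\T$ are $(\t_1,\t_2)$ and $(\t_2,\t_1)$; connectedness of $P$ forces at least one to be present (the Hasse diagram has an edge between $\t_1$ and $\t_2$) and acyclicity of $D_\T$ forbids both. Say $(\t_2,\t_1)$ is absent: then no element of $\t_2$ lies below an element of $\t_1$, so, since $P=\t_1\sqcup\t_2$, the set $\t_1$ is an order ideal and $\t_2$ an order filter; both are nonempty and connected because they are tubes. Conversely, given a partition $P=I\sqcup F$ with $I$ a connected nonempty ideal and $F$ a connected nonempty filter, $I$ and $F$ are convex (every ideal or filter is), hence tubes, and $D_{\{I,F\}}$ has no arc $(F,I)$, so it is acyclic and $\{I,F\}$ is a tubing partition. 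These two assignments are mutually inverse, which gives the bijection.

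For (ii), a tubing partition with $|\T|=|P|-1$ has exactly one two-element block $\{a,b\}$ and singletons elsewhere. A two-element set is automatically convex, so $\{a,b\}$ is a tube exactly when $a$ and $b$ are adjacent in the Hasse diagram, i.e.\ when one covers the other; conversely each covering relation $i\precdot_P j$ produces the tube $\{i,j\}$. What still needs checking is that $D_\T$ is automatically acyclic for such $\T$: a directed cycle would be forced to pass through the vertex $\{a,b\}$, and chasing it produces either the relation $b<a$ (contradicting the cover $a\precdot_P b$, say) or a chain witnessing an element strictly between $a$ and $b$ (again contradicting that $a\precdot_P b$ is a cover). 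Hence $\T\mapsto$ the covering relation contained in its non-singleton block is a bijection from facet-partitions to covering relations of $P$, with the evident inverse.

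For (iii), I would note that $\OFacecl(P,\T)$ lies in the subspace $V_\T\subseteq\R^P$ of vectors constant on each tube of $\T$, which the coordinate projection identifies with $\R^{P/\T}$. Under this identification, the inequalities $x_i\le x_j$ for $i\leqp j$ cutting out $\COrd(P)$ become exactly the inequalities of the order cone of $P/\T$ --- here one uses that, by acyclicity of $D_\T$, the order relation of $P/\T$ is the reflexive--transitive closure of the relation generated by the arcs of $D_\T$, and that a tube-constant vector is monotone for $P$ iff the induced vector on blocks is monotone for $P/\T$. Therefore $\OFacecl(P,\T)$ and $\Ord(P/\T)$ are two transversal cross-sections --- cut by $\{\al_P=1\}$ and by $\{\al_{P/\T}=1\}$ --- of the single cone $\{\by\in\R^{P/\T}\mid y_\t\le y_{\t'}\text{ whenever }\t\preceq_{P/\T}\t'\}$, which is pointed modulo its lineality line $\R(1,\dots,1)$; since any two transversal cross-sections of such a cone are affinely isomorphic, $\OFacecl(P,\T)\cong\Ord(P/\T)$. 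I expect parts (i) and (ii) to be quick, and the only friction in (iii) to be the bookkeeping of reconciling the two normalizations ($\RSZ$ and $\{\al=1\}$) used to pin $\Ord(\cdot)$ down as a concrete polytope when passing from $P$ to $P/\T$ --- not a real obstacle, but the step where care is needed; it is absorbed by the cross-section remark (equivalently, by viewing $\Ord(Q)$ uniformly as the space of order-preserving maps $Q\to\R$ modulo $\Sim$).
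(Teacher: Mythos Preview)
The paper states this corollary without proof, treating it as an immediate consequence of \cref{prop:Ord_faces}; your proposal fills in exactly the routine verifications one would expect, and the overall argument is correct.

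One small slip: in part~(ii) you write ``A two-element set is automatically convex.'' This is false in general (e.g.\ $\{1,3\}$ in the chain $1\lp 2\lp 3$). What is true is that a \emph{connected} two-element set $\{a,b\}$ is automatically convex: connectedness in the Hasse diagram forces one of $a,b$ to cover the other, and a covering pair has nothing strictly between. Since you need both convexity and connectedness for $\{a,b\}$ to be a tube, and connectedness alone already forces the covering relation (hence convexity), your conclusion stands; only the phrasing needs adjusting.

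Your handling of~(iii) is fine. The point that $\OFacecl(P,\T)$ and $\Ord(P/\T)$ sit inside $\R^{P/\T}$ with different normalizations (both the zero-sum hyperplane and the $\al=1$ slice change when passing from $P$ to $P/\T$) is real, and you resolve it correctly by viewing both as cross-sections of the same pointed cone modulo its lineality line---equivalently, as the same quotient $\{\text{order-preserving maps }P/\T\to\R\}/\Sim$. This is presumably what the paper intends by ``is itself an order polytope.''
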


Let $\t\subseteq P$ be a \emph{non-singleton \pipe}, i.e., a \pipe satisfying $|\t|>1$. Recall that $\t$ is treated as a subposet $(\t,\leqp)$ of $P$. Given any set $A\supseteq \t$, define the following maps:
\begin{align*}%
 &\avg_\t:\R^A\to \R, \quad \bx\mapsto \frac1{|\t|}\sum_{i\in \t} x_i; &\quad &\psz^\t:\R^A\to\RSZ^\t,  \quad \bx\mapsto (x_i-\avg_\t(\bx))_{i\in \t};\\
  &\al_\t:\R^A\to\R,\quad \bx\mapsto \sum_{i,j\in\t:\ i\,\precdot_{P}\, j} x_j-x_i;&\quad &\res_\t:\R^A\dashrightarrow \R^\t,\quad \bx\mapsto \frac1{\al_\t(\bx)} \psz^\t(\bx).
\end{align*}
Here $\res_\t$ is a rational map defined on the subset of $\R^A$ where $\al_\t(\bx)\neq0$. 

\begin{remark}\label{rmk:suppress}
We suppress the dependence of the maps $\avg_\t,\psz^\t,\al_\t,\res_\t$ on $A$. Thus, for example, we have $\al_\t\circ\psz^\t=\al_\t$ as maps $\R^A\to\R$.
\end{remark}

The map $\res_P$ provides a homeomorphism between the $P$-configuration space defined in~\eqref{eq:intro:Oo_dfn} and the interior $\Oo(P)$ of $\Ord(P)$. More generally, suppose that $\t\subseteq\tp$ are non-singleton \pipes. Then we have a map
\begin{equation}\label{eq:psz_ord}
  \psz^{\Px[\t]}: \COrd(\Px[\tp])\to \COrd(\Px[\t]).
\end{equation}
The map $\res_\t:\COrd(\Px[\tp])\dashrightarrow\Ord(\Px[\t])$ is defined at all points $\bx\in\COrd(\Px[\tp])$ such that not all coordinates $\{x_i\mid i\in\t\}$ are equal. For the case $\tp=P$, we find that $\res_\t$ coincides with the map $\bx\mapsto\bx\tbr[\t]$ from \cref{sec:intro:compact}. Thus the map $\ResP$ in~\eqref{eq:intro:embP} extends to a map
\begin{equation}\label{eq:embP_Coo}
  \ResP:\Coo(P)\to \prod_{|\t|>1} \Ord(\Px[\t]),\quad \bx\mapsto (\Res{\t}{P}(\bx))_{|\t|>1}.
\end{equation}

\begin{remark}\label{rmk:Stanley}
Suppose $P$ is bounded, and denote by $\hat0,\hat1\in P$ its minimal and maximal elements. The order polytope $\Ohat(P)$, introduced by Stanley~\cite{Stanley_two}, is the set of all $\bx\in \R^P$ satisfying $x_{\hat0}=0$, $x_{\hat1}=1$, and $x_i\leq x_j$ for all $i\leqp j$. Letting $\alp_P(\bx):=x_{\hat1}-x_{\hat0}$, we see that the map $\psz^P$ provides an affine isomorphism between $\Ohat(P)$ and the polytope $\Ord'(P):=\{\bx\in\COrd(P)\mid \alp_P(\bx)=1\}$. Thus the polytopes $\Ohat(P)$ and $\Ord(P)$ are projectively equivalent. When $P$ is not bounded, it appears that the polytope $\Ord(P)$ has not been considered before.
\end{remark}

\subsection{Proof of Theorem~\ref{thm:poset_ass}}\label{sec:proof:poset_ass}
We use a variation of Lee's construction~\cite{Lee}. Our proof can be summarized as follows. Recall from \cref{prop:Ord_faces} that the faces of $\Ord(P)$ correspond to \piping partitions of $P$, and therefore the same holds for the polar dual $\Ord(P)^\ast$. For each proper \pipe $\t$, we have a face of $\Ord(P)^\ast$ corresponding to the partition 
\begin{equation}\label{eq:t-partition}
   \{\t\}\sqcup\{\{i\}\mid i\in P\setminus \tube\}.
\end{equation}
 We will show that $\Ass(P)^\ast$ is obtained from $\Ord(P)^\ast$ by performing \emph{stellar subdivisions} at all such faces. The order of stellar subdivisions is chosen so that the size of $\t$ is weakly decreasing along the way. Before we proceed with the proof, we consider an example of constructing the polar dual of the polytope $\Ass(P)$ from \cref{fig:poset_ass_3d}.

\begin{figure}
\makebox[1.0\textwidth]{
\def\Mscl{0.7}
  \setlength{\tabcolsep}{-10pt}
\scalebox{1.2}{
\begin{tabular}{ccc}
  \includegraphics[width=0.3\textwidth]{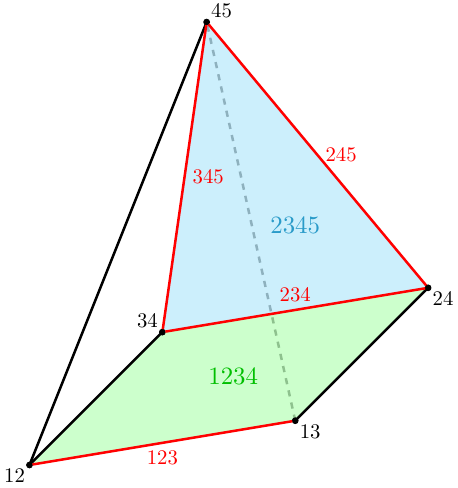} &  \includegraphics[width=0.3\textwidth]{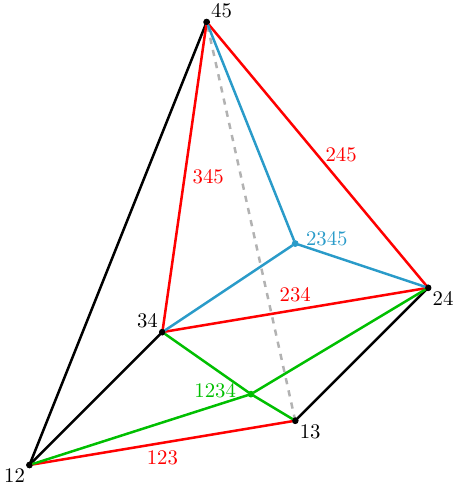}  &  \includegraphics[width=0.3\textwidth]{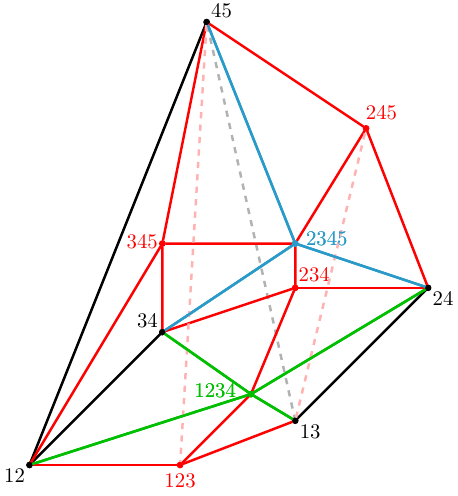} \\
\scalebox{\Mscl}{$\Mcal=\{P\}$} & \scalebox{\Mscl}{$\Mcal=\{P,1234,2345\}$\quad} &                                             \scalebox{\Mscl}{$\Mcal=\{P,1234,2345,123,234,245,345\}$}\\
\scalebox{\Mscl}{$\Ass_\Melt(P)^\ast=\Ord(P)^\ast$} && \scalebox{\Mscl}{$\Ass_\Melt(P)^\ast=\Ass(P)^\ast$}
\end{tabular}
}
}
  \caption{\label{fig:melt} Constructing the polytope $\Ass_\Melt(P)^\ast$ from $\Ord(P)^\ast$ inductively via stellar subdivisions. See \cref{ex:melt}.}
\end{figure}
\begin{example}\label{ex:melt}
Let $P=$
\scalebox{0.7}{\begin{tikzpicture}[scale=0.50,baseline=(C1.base)]
\coordinate (C1) at (0, 0);
\coordinate (C2) at (-1, 1);
\coordinate (C3) at (1, 1);
\coordinate (C4) at (0, 2);
\coordinate (C5) at (0, 3);
\node[ scale=0.80, inner sep=2pt] (N1) at (0, 0) {1};
\node[ scale=0.80, inner sep=2pt] (N2) at (-1, 1) {2};
\node[ scale=0.80, inner sep=2pt] (N3) at (1, 1) {3};
\node[ scale=0.80, inner sep=2pt] (N4) at (0, 2) {4};
\node[ scale=0.80, inner sep=2pt] (N5) at (0, 3) {5};
\draw[line width=1.00pt] (N1)--(N2);
\draw[line width=1.00pt] (N1)--(N3);
\draw[line width=1.00pt] (N2)--(N4);
\draw[line width=1.00pt] (N3)--(N4);
\draw[line width=1.00pt] (N4)--(N5);
\end{tikzpicture}} be the poset in \figref{fig:poset_ass_3d}(left). The polytope $\Ord(P)^\ast$ is shown in \figref{fig:melt}(left). Here and below we abbreviate $123:=\{1,2,3\}$, etc. The faces of $\Ord(P)^\ast$ correspond to \piping partitions of $P$, and each face of the form~\eqref{eq:t-partition} for some proper \pipe $\t$ is labeled by $\t$ in \figref{fig:melt}(left). For instance, the top left triangular face with vertices $\{12,34,45\}$ corresponds to the \piping partition $\{12,345\}$ which is not of the form~\eqref{eq:t-partition}, so we do not label this face in the figure. Next, we apply stellar subdivisions at all faces labeled by $4$-element \pipes, obtaining the polytope in \figref{fig:melt}(middle). The set $\Melt$, defined below, records the list of faces at which the subdivision has already been performed. 
We then apply stellar subdivisions at all faces labeled by $3$-element \pipes, obtaining the polytope in \figref{fig:melt}(right). Since $2$-element \pipes label the vertices of $\Ord(P)^\ast$, the corresponding stellar subdivisions do not change the polytope. The vertices of the resulting polytope in \figref{fig:melt}(right) are in bijection with proper \pipes, and a collection of vertices forms a face precisely when the corresponding \pipes form a \piping. Thus the polar dual of this polytope is combinatorially equivalent to $\Ass(P)$, as one can check by comparing \figref{fig:melt}(right) to \figref{fig:poset_ass_3d}(right).
\end{example}

We now explain the proof in detail. Suppose we are given a set $\Melt$ of \pipes such that for $\tube\subseteq \tube'$ with $\tube\in\Melt$, we have $\tube'\in\Melt$. We refer to the elements of $\Melt$ as \emph{melted \pipes}. A \pipe which does not belong to $\Melt$ is called \emph{frozen}.

A \piping $\Tubing$ satisfying $P\in\T$ is called \emph{$\Melt$-admissible} if
\begin{enumerate}[label=\normalfont(\alph*)]
\item\label{M_adm_fro} for each frozen \pipe $\tube\in\Tubing$, there is no $\tube'\in\Tubing$ such that $\tube'\subsetneq \tube$.
\item\label{M_adm_melt} for each melted \pipe $\tube\in\Tubing$,
 the maximal by inclusion \pipes $\tube'\in\Tubing$ satisfying $\tube'\subsetneq \tube$ form a \piping partition of $\tube$.
\end{enumerate}
Let $(\Adm(P;\Melt),\leq_\Melt)$ denote the poset of all $\Melt$-admissible \pipings, where $\Tubing\leq_\Melt \Tubing'$ if and only if $\T$ is obtained from $\T'$ by removing some melted \pipes and subdividing some frozen \pipes. More precisely, $\T\leq_\Melt \T'$ if
\begin{enumerate}
\item\label{tube_rel:frozen} for each frozen \pipe $\tube\in \Tubing$, there exists a frozen \pipe $\tube'\in\Tubing'$ satisfying $\tube\subseteq\tube'$, and
\item\label{tube_rel:melted} for each melted \pipe $\tube\in\Tubing$, we have $\tube\in \Tubing'$.
\end{enumerate}

Our proof will proceed by induction on $|\Melt|$, starting from the base case $\Melt=\{P\}$. For each set $\Melt$, we will introduce a polytope $\Ass_\Melt(P)^\ast$ whose boundary face lattice\footnote{By definition, the \emph{boundary face lattice}  includes all faces (in particular, the empty face) except for the polytope itself.} is isomorphic to $\Adm(P;\Melt)$. For each $\T\in\Adm(P;\Melt)$, we let $\FM_\T$ denote the corresponding face of $\Ass_\Melt(P)^\ast$. We will show that its dimension is given by
\begin{equation}\label{eq:FM_dim}
  \dim(\FM_\Tubing) = |P|+|\Tubing\cap \Melt|-|\Tubing\setminus \Melt|-2.
\end{equation}
For example, the minimal element of $\Adm(P;\Melt)$ consists of $P$ together with all singleton \pipes. (Throughout the entire induction process, the singleton \pipes stay frozen.) By~\eqref{eq:FM_dim}, the face corresponding to this minimal element has dimension $-1$ and thus is the empty face of $\Ass_\Melt(P)^\ast$. We encourage the reader to check that the face poset of the polytope in \figref{fig:melt}(middle) coincides with $\Adm(P;\Melt)$ for $\Mcal=\{P,1234,2345\}$.

Consider the base case $\Melt=\{P\}$. By definition, each $\Melt$-admissible \piping $\T$ contains $P$ together with a \piping partition of $P$ into frozen \pipes. The order relation $\leq_\Melt$ is given by coarsening, which is the opposite of~\eqref{eq:Ord_faces}. Thus we let $\Ass_\Melt(P)^\ast:=\Ord(P)^\ast$ be the polar dual of $\Ord(P)$. For example, maximal elements of $\Adm(P;\Melt)$ correspond to \pipings of the form $\T=\{P,I,F\}$ where $I$ (resp., $F$) is a nonempty order ideal (resp., order filter). By \cref{cor:Ord_faces_properties}, such \pipings are in bijection with the facets of $\Ass_\Melt(P)^\ast$. We check that~\eqref{eq:FM_dim} holds for the base case.

We now proceed with the induction step. Suppose we have constructed the polytope $\Ass_\Melt(P)^\ast$ as above for some set $\Melt$. Choose a maximal by inclusion frozen proper \pipe $\tube\notin \Melt$, and let $\Melt':=\Melt\sqcup\{\tube\}$. Set
\begin{equation*}%
  \Sbf_\tube:=\{P,\tube\}\sqcup\{\{i\}\mid i\in P\setminus \tube\}.
\end{equation*}
 Thus $\Scal_\t$ is an $\Mcal$-admissible \piping. Let $\FM_{\Sbf_\tube}$ be the corresponding face of $\Ass_\Melt(P)^\ast$. Our goal is to perform a stellar subdivision of $\Ass_\Melt(P)^\ast$ at the face $\FM_{\Sbf_\tube}$.

We give some background on stellar subdivisions; see e.g.~\cite[Exercise~3.0]{Ziegler} or~\cite[Section~2.1]{AdBe}. Let $Q$ be a polytope and $F\subsetneq Q$ be its face. Assume for simplicity that $Q$ contains the origin in its interior. Geometrically, a stellar subdivision $\Stel(Q,F)$ of $Q$ at the face $F$ is obtained by choosing a point $\bx$ in the relative interior of $F$ and setting 
\begin{equation*}%
  \Stel(Q,F):=\Conv(Q\cup \{(1+\eps)\bx\})
\end{equation*}
for some sufficiently small $\eps>0$. Combinatorially, the face poset of $\Stel(Q,F)$ is obtained from that of $Q$ via the following procedure:
\begin{enumerate}[label=\normalfont(\roman*)]
\item\label{stellar1} add a new vertex $\bx':=(1+\eps)\bx$;
\item\label{stellar2} remove all faces $F'$ of $Q$ containing $F$;
\item\label{stellar3} for each face $F'$ of $Q$ containing $F$ and each face $F''\subseteq F$ not containing $F$, add a new face $\Conv(F''\cup\{\bx'\})$ of dimension $\dim(F'')+1$.
\end{enumerate}

Going back to our proof, we let $F:=\FM_{\Sbf_\tube}$, $Q:=\Ass_\Melt(P)^\ast$, and $\Ass_{\Melt'}(P)^\ast:=\Stel(Q,F)$. Thus the face poset of $\Ass_{\Melt'}(P)^\ast$ is given  by steps~\ref{stellar1}--\ref{stellar3} above. Let us now compare $\Adm(P;\Melt)$ to $\Adm(P;\Melt')$ and show that $\Adm(P;\Melt')$ is obtained from $\Adm(P;\Melt)$ by applying analogs of steps~\ref{stellar1}--\ref{stellar3}.

\ref{stellar1}: $\Adm(P;\Melt')\setminus \Adm(P;\Melt)$ contains a \piping 
\begin{equation*}%
  \Sbf'_\tube:=\{P,\tube\}\sqcup\{\{i\}\mid i\in P\}.
\end{equation*}
 It corresponds to the new vertex $\bx'$ of $\Ass_{\Melt'}(P)^\ast$.

\ref{stellar2}: Let $\Tubing$ be an $\Melt$-admissible \piping such that $\Sbf_\tube\leq_\Melt \Tubing$, i.e., such that $\FM_{\Sbf_\tube}\subseteq \FM_{\Tubing}$. Since $\tube$ was a maximal by inclusion frozen \pipe, by~\eqref{tube_rel:frozen} we get that $\tube\in\Tubing$. In particular, $\Tubing$ is not $\Melt'$-admissible, thus the face $\FM_{\Tubing}$ is removed. Conversely, any $\T\in\Adm(P;\Melt)\setminus\Adm(P;\Melt')$ must contain $\t$.

\ref{stellar3}: Let $\Sbf_\tube\leq_\Melt \Tubing$ be as above. Any $\Tubing'\leq_\Melt\Tubing$ is obtained from $\Tubing$ by removing some melted \pipes and subdividing some frozen \pipes. Moreover, we have $\Sbf_\tube\not\leq_\Melt \Tubing'$ if and only if $\tube\notin \Tubing'$ (thus $\tube$ was among the subdivided frozen \pipes). 
In this case, we claim that $\T'':=\Tubing'\sqcup\{\tube\}$ is an $\Melt'$-admissible \piping. First, because $\T'$ contains a subdivision of $\t$, any two \pipes in $\T''$ are either nested or disjoint. Next, we need to show that the directed graph $D_{\T}''$ is acyclic. Suppose otherwise that $\t''_1\to\t''_2\to\cdots\to\t''_m\to\t''_{m+1}=\t''_1$ is a directed cycle in $D_{\T}''$. For each $j\in[m]$, let $\t_j\in\T$ be the minimal by inclusion \pipe containing $\t''_j$. We see that $\t''_j\in\Melt'$ if and only if $\t_j\in\Melt'$, in which case $\t_j=\t''_j$. Let $D_\T$ be the directed graph obtained from $\T$ via~\eqref{eq:intro:acyclic}. Let $j\in[m]$. If $\t_j\cap \t_{j+1}=\emptyset$ then $(\t_j,\t_{j+1})$ is an edge in $D_\T$. Otherwise, $\t_j$ and $\t_{j+1}$ must be nested, say, $\t_j\subseteq\t_{j+1}$. Since $\t''_j\cap \t''_{j+1}=\emptyset$, we cannot have $\t''_{j+1}=\t_{j+1}$, so $\t_{j+1}\notin \Melt'$ is frozen, and therefore $\t_j=\t_{j+1}$ by \ref{M_adm_fro}. Because $\T'$ is itself a \piping, we must have $\t''_i=\t=\t_i$ for some $i\in[m]$. Therefore not all \pipes $\t_j$ are equal to each other. We arrive at a directed cycle in $D_\T$, a contradiction. We have shown that $\T''$ is a \piping. Finally, because $\t\in\Melt'$ is subdivided in $\T'$, the \piping $\T''$ is $\Melt'$-admissible. This way, we obtain all $\Melt'$-admissible \pipings containing $\tube$.

We let $\FMp_{\T''}$ be the face $\Conv(\FM_{\T'}\cup\{\bx'\})$ of $\Stel(Q,F)$. We find that $\dim\FMp_{\T''}=\dim\FM_{\T'}+1$, which is consistent with~\eqref{eq:FM_dim} since $\T''=\T'\sqcup\{\t\}$ and $\t\in\Melt'$. 
 Any $\Melt'$-admissible \piping not containing $\tube$ is already $\Melt$-admissible. This exactly parallels the description in step~\ref{stellar3}. We have shown that $\Adm(P;\Melt')$ is the boundary face lattice of $\Ass_{\Melt'}(P)^\ast$, completing the induction step.

We continue this process until $\Melt$ contains all proper \pipes. Then every $\Melt$-admissible \piping contains $P$ and all singleton \pipes. Removing them, we obtain an order-preserving bijection between $\Adm(P;\Melt)$ and the poset of proper \pipings ordered by inclusion. Thus the boundary face poset $\Adm(P;\Melt)$ of $\Ass_{\Melt}(P)^\ast$ is isomorphic to the face poset of the simplicial complex $\KAss(P)$ in \cref{thm:poset_ass}. \qed

\subsection{Properties of \posetdash associahedra}\label{sec:prop}
Recall that a poset $P$ is called a \emph{chain} if its covering relations are $1\precdot_P 2\precdot_P\cdots \precdot_P n$, and $P$ is called a \emph{claw} if its covering relations are $\hat0\precdot_P1,\hat0\precdot_P2,\dots,\hat0\precdot_Pn$. 
In the following result, we identify two polytopes if they are combinatorially equivalent.
\begin{corollary}\label{cor:poset_ass_properties}
Let $P$ be a finite connected poset with $|P|\geq2$.
\begin{enumerate}[label=\normalfont(\roman*)]
\item\label{PApr:simple} $\Ass(P)$ is a simple polytope of dimension $|P|-2$.
\item\label{PApr:flag} Its polar dual $\Ass(P)^\ast$ is simplicial, but in general not flag.
\item\label{PApr:faces_dim} For each proper \piping $\T$, the corresponding face of $\Ass(P)$ has dimension $|P|-|\T|-2$.
\item\label{PApr:vert} The vertices of $\Ass(P)$ are in bijection with proper \pipings of size $|P|-2$.
\item\label{PApr:facets} The facets of $\Ass(P)$ are in bijection with proper \pipes.
\item\label{PApr:faces} Each face of $\Ass(P)$ is a product of \posetdash associahedra.
\item\label{PApr:chain} When $P$ is a chain, $\Ass(P)$ is the $(|P|-2)$-dimensional associahedron.
\item\label{PApr:claw} When $P$ is a claw, $\Ass(P)$ is the $(|P|-2)$-dimensional permutohedron.
\end{enumerate}
\end{corollary}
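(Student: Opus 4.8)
I would prove the eight items in the following order, treating the formal ones first.

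\emph{Items \itemref{PApr:simple}, \itemref{PApr:faces_dim}, \itemref{PApr:vert}, \itemref{PApr:facets} and the ``simplicial'' clause of \itemref{PApr:flag}.} By \cref{thm:poset_ass}, $\Ass(P)$ is the polar dual of a simplicial $(|P|-2)$‑polytope, hence a simple polytope of dimension $|P|-2$, which is \itemref{PApr:simple} and the first clause of \itemref{PApr:flag}. Polar duality is an inclusion‑reversing bijection of faces carrying a $k$‑face of $\Ass(P)^\ast$ to a $((|P|-2)-1-k)$‑face of $\Ass(P)$; applied to the simplex of $\KAss(P)$ spanned by a proper tubing $\T$ — which has $|\T|$ vertices, hence dimension $|\T|-1$ — it yields a face of dimension $|P|-|\T|-2$, proving \itemref{PApr:faces_dim}. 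This dimension equals $0$ precisely when $|\T|=|P|-2$; since the facets of the simplicial polytope $\Ass(P)^\ast$ are exactly the inclusion‑maximal simplices of $\KAss(P)$, a proper tubing has size $|P|-2$ iff it is inclusion‑maximal, giving \itemref{PApr:vert}. The dimension equals $|P|-3$ precisely when $|\T|=1$, giving \itemref{PApr:facets}.

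\emph{The ``not flag'' clause of \itemref{PApr:flag}.} I would exhibit a poset whose tube complex has a minimal non‑face of size three. Take $P$ on the set $\{a_1,b_1,a_2,b_2,a_3,b_3\}$, defined as the transitive closure of the relations $a_i\lp b_i$ ($i=1,2,3$) together with $a_1\lp b_2$, $a_2\lp b_3$, $a_3\lp b_1$; its Hasse diagram is a $6$‑cycle, so $P$ is connected. Each $\t_i:=\{a_i,b_i\}$ is a proper tube; each pair $\{\t_i,\t_j\}$ is a proper tubing, since the $\t_i$ are pairwise disjoint and the relevant directed graph has a single edge; but $\{\t_1,\t_2,\t_3\}$ is not a tubing, as $D_{\{\t_1,\t_2,\t_3\}}$ contains the directed triangle $\t_1\to\t_2\to\t_3\to\t_1$. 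Hence $\KAss(P)$, and with it the boundary complex of $\Ass(P)^\ast$, is not flag.

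\emph{Item \itemref{PApr:faces}.} This is the substantive part. Fix a proper tubing $\T$ and put $\hat\T:=\T\cup\{P\}$, a forest under inclusion with root $P$. For $\t\in\hat\T$ with children $C(\t)$ (the inclusion‑maximal members of $\hat\T$ strictly inside $\t$), let $Q_\t$ be the poset obtained from the induced subposet on $\t$ by contracting each child $c\in C(\t)$ to a single element; explicitly, its ground set is $\bigl(\t\setminus\bigcup_{c\in C(\t)}c\bigr)\sqcup C(\t)$, with order induced from $P$. Recalling that faces of $\Ass(P)$ correspond to proper tubings with inclusion of faces reversing inclusion of tubings, the face lattice of the face indexed by $\T$ is the poset of proper tubings of $P$ containing $\T$, ordered by reverse inclusion. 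The key step is a \emph{contraction lemma}: for a proper tubing $\T'\supseteq\T$ and $\t\in\hat\T$, the images in $Q_\t$ of those $\t'\in\T'\setminus\T$ whose minimal enclosing member of $\hat\T$ equals $\t$ form a proper tubing $\U_\t$ of $Q_\t$, and $\T'\mapsto(\U_\t)_{\t\in\hat\T}$ is a poset isomorphism from $\{$proper tubings of $P$ containing $\T\}$ onto $\prod_{\t\in\hat\T}\{$proper tubings of $Q_\t\}$ (both ordered by reverse inclusion in each factor). Given the lemma, the face of $\Ass(P)$ indexed by $\T$ is combinatorially $\prod_{\t\in\hat\T}\Ass(Q_\t)$ — with the convention that $\Ass$ of a $2$‑element poset is a point, dropped from the product — proving \itemref{PApr:faces}; as a consistency check, $\sum_{\t\in\hat\T}(|Q_\t|-2)=|P|-|\T|-2$. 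The main obstacle lies entirely here: one must verify that $Q_\t$ is genuinely a poset (antisymmetry and transitivity of its induced order use the convexity of tubes and the acyclicity of $D_{\hat\T}$), that it is connected with at least two elements, and that contraction is a bijection compatible, in both directions, with convexity, connectedness, and the nested/disjoint and acyclicity conditions defining tubings.

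\emph{Items \itemref{PApr:chain} and \itemref{PApr:claw}.} These follow by exhibiting explicit isomorphisms of simplicial complexes (two simplicial polytopes with isomorphic boundary complexes are combinatorially equivalent). If $P$ is a chain $1\lp\cdots\lp n$, its proper tubes are the intervals $[i,j]$ with $i<j$ and $[i,j]\neq[1,n]$, and a set of such intervals is a proper tubing exactly when the intervals are pairwise nested or disjoint (disjoint intervals produce a single edge in $D$, and for arbitrary laminar families all $D$‑edges point ``left to right'', so acyclicity is automatic). Labeling the vertices of a convex $(n+1)$‑gon by $0,1,\dots,n$, the map $[i,j]\mapsto\{i-1,j\}$ is a bijection from proper tubes to diagonals carrying proper tubings to pairwise non‑crossing sets of diagonals; hence $\KAss(P)$ is the boundary complex of the dual of the $(n-2)$‑dimensional associahedron, so $\Ass(P)$ is that associahedron. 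If $P$ is a claw with minimum $\hat0$ and incomparable atoms $1,\dots,n$, then a subset is a tube iff it is a singleton or contains $\hat0$; thus $\t\mapsto\t\setminus\{\hat0\}$ bijects proper tubes with proper nonempty subsets of $[n]$, any two proper tubes share $\hat0$ (so are never disjoint and $D$ is edgeless), and a collection of proper tubes is a tubing iff it is a chain under inclusion. Therefore $\KAss(P)$ is the order complex of $2^{[n]}\setminus\{\emptyset,[n]\}$, the boundary complex of the dual of the $(n-1)$‑dimensional permutohedron, so $\Ass(P)$ is that permutohedron.
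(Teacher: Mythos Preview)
Your proof is correct and follows essentially the same approach as the paper: the formal items are deduced from \cref{thm:poset_ass} and polar duality, the non-flag example is the same six-cycle poset (the paper's \cref{fig:intr}), and item~\itemref{PApr:faces} uses the same quotient-poset construction $Q_\t=\t/\T[\t]$. The only cosmetic differences are in \itemref{PApr:chain} and \itemref{PApr:claw}, where you invoke diagonals of a polygon and the order complex of the truncated Boolean lattice, whereas the paper uses the equivalent models of plane rooted trees and ordered set partitions.
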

\begin{proof}
Most of these properties are simple consequences of the definitions and \cref{thm:poset_ass}. We comment on some of them.

\begin{figure}
\includegraphics[width=0.1\textwidth]{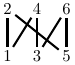}
  \caption{\label{fig:intr}A poset $P$ such that $\Ass(P)^\ast$ is not a flag simplicial complex.}
\end{figure}

\ref{PApr:flag}: Consider the poset $P$ in \cref{fig:intr}. The proper \pipes $\{1,2\}$, $\{3,4\}$, $\{5,6\}$ correspond to three vertices of $\Ass(P)^\ast$ such that any two of them form an edge of $\Ass(P)^\ast$. However, $\T:=\{\{1,2\},\{3,4\},\{5,6\}\}$ is not a \piping since the graph $D_\T$ contains a directed cycle. Thus these three vertices do not form a $2$-dimensional face of $\Ass(P)^\ast$, and therefore the boundary of $\Ass(P)^\ast$ is not a flag simplicial complex.

\ref{PApr:faces}: Consider a proper \piping $\T$. For each $\t\in\PT$, consider the quotient $\t/\T[\t]$ of the poset $\t$ obtained by identifying all elements which belong to some $\tm\in\T$ satisfying $\tm\subsetneq \t$. Then the face of $\Ass(P)$ corresponding to $\T$ is combinatorially equivalent to the product $\prod_{\t\in\PT}\Ass(\t/\T[\t])$ of such quotient \posetdash associahedra.

\ref{PApr:chain}: Let $n:=|P|$. Recall that the faces of the $(n-2)$-dimensional associahedron are in bijection with plane rooted trees with $n$ leaves, where the root has degree $\geq2$. Face closure relations correspond to edge contractions in such trees. In view of \cref{dfn:tree} below, it follows that when $P$ is a chain, plane rooted trees with $n$ leaves are in bijection with proper \pipings. Explicitly, we may assume that each plane tree is embedded in the upper half plane with the leaves lying on the $x$-axis. Labeling the leaves $1,2,\dots,n$ from left to right, each non-leaf vertex $v$ gives rise to a \pipe $\t_v$ consisting of the labels of its descendant leaves. The collection of $\t_v$ over all non-leaf vertices $v$ other than the root of the tree gives a proper \piping. Clearly, each proper \piping arises from a unique such plane rooted tree.

\ref{PApr:claw}: Let $n:=|P|-1$. Label the elements of $P\setminus\{\hat0\}$ by $1,2,\dots,n$ as in \figref{fig:ass_perm}(right). Recall that the $(n-1)$-dimensional \emph{permutohedron} $\Perm_n$ is the convex hull of all vectors obtained from $(1,2,\dots,n)$ by permuting the coordinates. The faces of $\Perm_n$ are in bijection with \emph{ordered set partitions} $(B_1,B_2,\dots,B_k)$, where $[n]=B_1\sqcup B_2\sqcup\cdots\sqcup B_k$ and each $B_i$ is nonempty. For each $i\in[k]$, let $\t_i:=\{\hat0\}\sqcup B_1\sqcup\cdots\sqcup B_i$. We obtain a proper \piping $\T:=\{\t_i\mid i\in[k]\}$, and the resulting map gives the desired order-preserving bijection.
\end{proof}

\section{\Posetdash associahedra as compactifications}\label{sec:poset_compact}
We develop some further properties of compactifications introduced in \cref{sec:intro:compact} and prove \cref{thm:intro:comp}. Before we proceed with the proof, we demonstrate a problem that arises when extending the definition of Axelrod--Singer compactifications to poset configuration spaces.  The standard approach~\cite{AxSi,Sinha,LTV} when $P$ is a chain is to consider a family of functions
\begin{equation*}%
  d_{i,j,k}:\Oo(P)\to[0,\infty],\quad d_{i,j,k}(\bx):=\frac{|x_i-x_j|}{|x_i-x_k|}
\end{equation*}
for all triples $i,j,k\in P$ of distinct elements. The Axelrod--Singer compactification is then essentially the closure of the image of $\Oo(P)$ inside the corresponding ${|P|\choose 3}$-dimensional space.\footnote{For arbitrary manifolds, one needs to include other functions keeping track of the coordinates $x_i$ and the directions of the unit vectors $\frac{x_i-x_j}{|x_i-x_j|}$ when the points $x_i$ and $x_j$ collide. In the $1$-dimensional case, ignoring these extra  functions does not alter the resulting compactifications.} In order to apply a similar construction to an arbitrary poset $P$, one would expect to fix some set $\Triples(P)$ consisting of some of the ${|P|\choose 3}$ triples $(i,j,k)$, and then define $\Comp(P)$ to be the closure of the image of $\Oo(P)$ inside the corresponding $|\Triples(P)|$-dimensional space. The following example demonstrates that this is impossible.
\begin{example}\label{ex:bad_triples}
\begin{figure}
  \setlength{\tabcolsep}{12pt}
\begin{tabular}{cccc}
\includegraphics[width=0.1\textwidth]{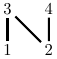} & 
\includegraphics[width=0.3\textwidth]{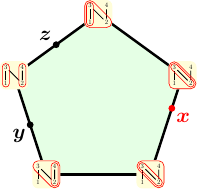}&
\includegraphics[width=0.4\textwidth]{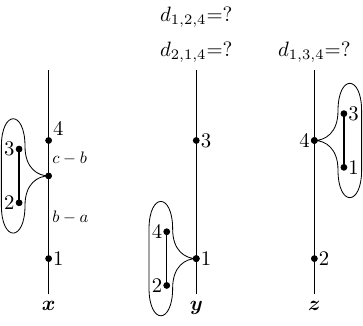}\\
$P$ & $\Ass(P)$ & 
\end{tabular}
  \caption{\label{fig:N_shaped} The functions $d_{i,j,k}$ needed to determine the ratio ${(c-b):(b-a)}$ cannot be extended to the whole boundary of $\Ass(P)$. See the proof of \cref{cor:poset_ass_properties}\ref{PApr:claw}.}
\end{figure}

Let $P$ be the \emph{$N$-shaped poset} with relations $1\lp 3 \gp 2\lp 4$. Thus $\Ass(P)$ is a pentagon; see \cref{fig:N_shaped}. Let $\bx\pat\in\Oo(P)$ be a configuration of four points on a line. We will use the definition~\eqref{eq:intro:Oo_dfn} of $\Oo(P)$ as a subset of $\R^P/\Sim$. Consider a point $\bx\in\Ass(P)$ obtained as the limit $x\pat_1\to a$, $x\pat_2,x\pat_3\to b$, $x\pat_4\to c$. Letting $a< b< c$ vary, we obtain a $1$-dimensional face of $\Ass(P)$. Thus there should be a triple $(i,j,k)\in\Triples(P)$ such that $d_{i,j,k}$ allows one to recover the ratio $(c-b):(b-a)$. The set of such triples $(i,j,k)$, modulo the symmetry of $P$ swapping $1\leftrightarrow 4$ and $2\leftrightarrow3$, and modulo swapping $j$ and $k$ in $d_{i,j,k}$, consists of $(1,2,4)$, $(2,1,4)$, and $(1,3,4)$. Suppose $(1,2,4)\in\Triples(P)$ or $(2,1,4)\in\Triples(P)$. Then consider a different limit where $x\pat_1,x\pat_2,x\pat_4\to 0$ and $x\pat_3\to 1$. This limit should yield a single point $\by\in \Ass(P)$. However, depending on how $x\pat_1,x\pat_2,x\pat_4$ approach $0$, the ratios $d_{1,2,4}(\bx\pat)$ and $d_{2,1,4}(\bx\pat)$ may converge to any numbers in $[0,\infty]$. Thus $(1,2,4),(2,1,4)\notin\Triples(P)$. Similarly, if $(1,3,4)\in\Triples(P)$ then we consider a limit where $x\pat_1,x\pat_3,x\pat_4\to1$, $x\pat_2\to 0$. This should yield a single point $\bz\in\Ass(P)$ but the ratio $d_{1,3,4}(\bx\pat)$ again may converge to any number depending on the way we take the limit. We arrive at a contradiction. See \cref{fig:N_shaped}.

The same problem arises if we consider more general distance ratio functions $d'_{i,j,k,\ell}(\bx):=\frac{|x_i-x_j|}{|x_k-x_\ell|}$. For instance, the above ratio $(c-b):(b-a)$ may be recovered from $d'_{1,3,2,4}$. However, considering a limit  $x\pat_1,x\pat_3\to 1$, $x\pat_2,x\pat_4\to0$ corresponding to a vertex of $\Ass(P)$, we again conclude that $d'_{1,3,2,4}$ cannot be used in the construction.
\end{example}

\subsection{Coherent collections}\label{sec:coh}
Our first goal is to describe which elements of $\prod_{|\t|>1} \Ord(\Px[\t])$ belong to $\Comp(P)$. For that, we will introduce the notion of a \emph{coherent collection}. Recall from~\eqref{eq:psz_ord} that for any \pipes $\t\subseteq\tp$ with $|\t|>1$, the map $\psz^\t$ gives a projection $\COrd(\tp)\to\COrd(\t)$.
\begin{definition}\label{dfn:coh}
An element $\bx\in \prod_{|\t|>1} \Ord(\Px[\t])$ is called \emph{coherent} if
\begin{equation}\label{eq:coh}
\text{for any $\t\subsetneq\t_+$ with $|\t|>1$, there exists $\la\in\Rtnn$ such that }\psz^\t(\bx\tbr[\tp])=\la\bx\tbr[\t].
\end{equation}
\end{definition}
\noindent We let $\Coh(P)$ denote the set of points $\bx\in \prod_{|\t|>1} \Ord(\Px[\t])$ satisfying~\eqref{eq:coh}. We will see later in \cref{prop:Comp=Coh} that $\Coh(P)=\Comp(P)$.

\begin{remark}\label{rmk:blow_up}
For  $(\by,\bz)\in\C^d\times(\C^d\setminus\{\bnull\})$, the condition that there exists some $\la\in\C$ satisfying $\by=\la\bz$ cuts out a subvariety of $\C^d\times (\C^d\setminus\{\bnull\})$ defined by equations $y_iz_j=y_jz_i$ for all $i, j\in[d]$. This construction is closely related to the classical notion of a \emph{blow-up} in algebraic geometry; see e.g.~\cite[page 28]{Hartshorne}. Thus the space $\Coh(P)$ may be considered a \emph{polytope-theoretic blow-up} of $\Ord(P)$ along the collection of faces indexed by \piping partitions of the form~\eqref{eq:t-partition}. We note that there is a well-known connection between blow-ups of toric varieties and stellar subdivisions of the associated polytopes; see e.g.~\cite[Section~1.7]{Oda}. It would be interesting to find some family of algebraic varieties reflecting the combinatorics of \posetdash associahedra.
\end{remark}

\subsection{A cell decomposition}\label{sec:cell_decn}
Given a point $\bx\in\Coh(P)$ and a non-singleton \pipe $\t$, we have a point $\bx[\t]\in\Ord(\t)$. We may therefore consider the corresponding \piping partition $\tubes(\bx[\t])$ of $\t$ defined in \cref{sec:order_polyt}. 
\begin{definition}\label{dfn:bx_to_tubing}
Let $\bx\in\Coh(P)$. Let $\HAT(\bx)$ be the smallest collection of \pipes such that
\begin{itemize}
\item $\HAT(\bx)$ contains $P$;
\item for each non-singleton $\t\in\HAT(\bx)$, $\HAT(\bx)$ also contains all \pipes in $\tubes(\bx\tbr[\t])$.
\end{itemize}
In particular, $\HAT(\bx)$ contains $P$ and all singleton \pipes. We let $\T(\bx)$ denote the set of proper \pipes in $\HAT(\bx)$. For an arbitrary proper \piping $\T$, we let $\HAT$ be obtained from $\T$ by adding $P$ and all singleton \pipes.
\end{definition}
\begin{definition}\label{dfn:tree}
Let $\HAT$ be a collection of \pipes containing $P$ and all singleton \pipes, such that any two \pipes in $\HAT$ are either nested or disjoint. Then $\HAT$ has the following structure of a rooted tree. The \pipe $P\in\HAT$ is the root, and the singleton \pipes are the leaves. For each non-singleton \pipe $\t\in\HAT$, the set $\HAT[\t]$ of its children consists of all maximal by inclusion \pipes $\tm\in\HAT(\bx)$ satisfying $\tm\subsetneq \t$. 
\end{definition}
\begin{lemma}\label{le:bx_to_tubing}
For any $\bx\in\Coh(P)$, $\T(\bx)$ is a proper \piping.
\end{lemma}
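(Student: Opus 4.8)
The plan is to verify the two defining properties of a proper tubing for the collection $\T(\bx)$: first, that any two tubes in $\T(\bx)$ are nested or disjoint; and second, that the directed graph $D_{\T(\bx)}$ from~\eqref{eq:intro:acyclic} is acyclic. Properness (that $1<|\t|<|P|$ for each $\t\in\T(\bx)$) is automatic since $\T(\bx)$ by definition consists only of the proper tubes in $\HAT(\bx)$, and all tubes produced by $\tubes(\bx\tbr[\t])$ are by construction convex connected subsets of $P$, hence $P$-tubes.

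For the nested-or-disjoint property, I would argue by induction on the tree structure of $\HAT(\bx)$ described in \cref{dfn:tree}. Given two tubes $\t,\t'\in\HAT(\bx)$, look at the smallest tube $\t''\in\HAT(\bx)$ containing both (such a tube exists since $P$ itself contains everything). If $\t''=\t$ or $\t''=\t'$ we are nested and done. Otherwise both $\t$ and $\t'$ are contained in proper descendants of $\t''$; since the children $\HAT(\bx)[\t'']$ of $\t''$ are exactly the tubes of the tubing partition $\tubes(\bx\tbr[\t''])$ of $\t''$, they are pairwise disjoint by \cref{prop:Ord_faces}, so $\t$ and $\t'$ lie in distinct children and are therefore disjoint; or they lie in the same child $\t_1$, and by minimality of $\t''$ this forces $\t=\t_1$ or $\t'=\t_1$, contradicting the assumption. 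Either way, recursing into the child $\t_1$ and using that $\t_1$-subtubings inherit the nested-or-disjoint property completes the induction. The key input is that each $\tubes(\bx\tbr[\t])$ is a genuine set partition of $\t$ into tubes, which is precisely the content of the discussion preceding \cref{prop:Ord_faces}.

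For acyclicity of $D_{\T(\bx)}$, suppose toward a contradiction that there is a directed cycle $\t_1\to\t_2\to\cdots\to\t_m\to\t_1$ in $D_{\T(\bx)}$; by~\eqref{eq:intro:acyclic} all the $\t_j$ are pairwise disjoint (an edge requires empty intersection), and for each $j$ there are $a_j\in\t_j$, $b_j\in\t_{j+1}$ with $a_j\lp b_j$. Let $\t^\ast$ be the smallest tube in $\HAT(\bx)$ containing all of $\t_1,\dots,\t_m$; such $\t^\ast$ exists, is non-singleton, and by disjointness of the $\t_j$ together with the nested-or-disjoint structure, each $\t_j$ is contained in a distinct child of $\t^\ast$ — so passing to $\T^\ast:=\tubes(\bx\tbr[\t^\ast])$, the tubes $\t'_j$ (the child of $\t^\ast$ containing $\t_j$) form a directed cycle in $D_{\T^\ast}$, using that $a_j\lp b_j$ with $a_j\in\t'_j$, $b_j\in\t'_{j+1}$ and that the $\t'_j$ remain pairwise disjoint. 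But $\T^\ast$ is a tubing partition of $\t^\ast$, hence its graph $D_{\T^\ast}$ is acyclic by \cref{prop:Ord_faces} — contradiction. I expect the main obstacle to be making precise the reduction in this last step: one must check that the $\t'_j$ really are pairwise distinct (so the cycle in $D_{\T^\ast}$ is nontrivial) and that collapsing each $\t_j$ into its ancestor child $\t'_j$ preserves all the required comparabilities $a_j\lp b_j$ — this uses that $\t'_j\cap\t'_{j+1}=\emptyset$, which in turn follows from minimality of $\t^\ast$ exactly as in the argument above, and that the comparability relation $\lp$ restricted to $\t^\ast$ is just the induced subposet order. The rest is bookkeeping about the recursive definition of $\HAT(\bx)$.
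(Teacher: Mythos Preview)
Your approach is essentially the same as the paper's: take the lowest common ancestor $\t^\ast$ in $\HAT(\bx)$ of the tubes in a putative cycle, project each tube to the child of $\t^\ast$ containing it, and obtain a cycle among the children, contradicting that $\tubes(\bx[\t^\ast])$ is a tubing partition.

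One small imprecision: you assert that the $\t_j$ are ``pairwise disjoint'' and that the projected $\t'_j$ are ``pairwise distinct.'' Neither is quite right---the edge condition in~\eqref{eq:intro:acyclic} only forces \emph{consecutive} tubes in the cycle to be disjoint, and two non-consecutive $\t_j$ could well project to the same child of $\t^\ast$. What you actually need (and what the paper uses) is the weaker fact that the $\t'_j$ are \emph{not all equal}: if they were, that common child would contain every $\t_j$, contradicting the minimality of $\t^\ast$. Then for consecutive $j$, either $\t'_j=\t'_{j+1}$ or $\t'_j\cap\t'_{j+1}=\emptyset$ with an edge $\t'_j\to\t'_{j+1}$; removing consecutive repeats yields a genuine cycle in $D_{\tubes(\bx[\t^\ast])}$. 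You essentially flag this issue yourself at the end, so the fix is minor.
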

\begin{proof}
It is clear that any two \pipes in $\HAT:=\HAT(\bx)$ are either nested or disjoint. We need to show that the directed graph $D_{\T}$ is acyclic. Suppose otherwise that $\t'_1\to\t'_2\to\cdots\to\t'_m\to\t'_{m+1}=\t'_1$ is a cycle in $D_{\T}$. Let $\tp\in\HAT$ be the lowest common ancestor (cf. \cref{dfn:tree}) of $\t'_1,\t'_2,\dots,\t'_m$. For each $j\in[m]$, let $\t_j$ be the child of $\tp$ containing $\t'_j$. Thus the \pipes $\t_1,\t_2,\dots,\t_m$ are not all equal to each other. The children of $\tp$ in $\HAT$ form a \piping partition $\HAT[\tp]$ of $\tp$ equal to $\tubes(\bx[\tp])$. Thus for each $j\in[m]$, either $\t_j=\t_{j+1}$ or $\t_j\cap \t_{j+1}=\emptyset$, in which case $\t_j\to\t_{j+1}$ is an edge of $D_{\T}$. We have therefore found a cycle in $D_{\T}$ consisting of children of $\tp$, which contradicts the fact that they form a \piping partition of $\tp$.
\end{proof}

Our next goal is to show that any point $\bx\in\Coh(P)$ is completely determined by the points $\bx[\t]$ for all  $\t\in\HAT(\bx)$ (as opposed to all \pipes $\t$) satisfying $|\t|>1$; cf. \figref{fig:comp}(c,d) and \cref{ex:comp}.
\begin{definition}\label{dfn:Pareq}
Given an arbitrary subset $A\subseteq P$ and a \piping $\HAT$ with $P\in\HAT$, let $\Pareq(\HAT,A)$ be the minimal by inclusion \pipe $\t\in \HAT$ satisfying $\t\supseteq A$.
\end{definition}
\begin{lemma}\label{lemma:Coh_reconstruct}
Let $\bx\in\Coh(P)$ and $\HAT:=\HAT(\bx)$. Let $\t$ be any non-singleton \pipe, and let $\tp:=\Pareq(\HAT,\t)$. Then
\begin{equation*}%
  \al_\t(\bx[\tp])>0 \quad\text{and}\quad  \bx[\t]=\Res{\t}{\tp}(\bx[\tp]).
\end{equation*}
\end{lemma}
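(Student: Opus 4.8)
The plan is to split into the trivial case $\t\in\HAT$ and the substantive case $\t\subsetneq\tp$. When $\t\in\HAT$ we have $\tp=\Pareq(\HAT,\t)=\t$ (the tubes of $\HAT$ containing $\t$ are pairwise nested since they all contain the nonempty set $\t$, so $\t$ itself is their bottom element), and then $\al_\t(\bx[\t])=1>0$ because $\bx[\t]\in\Ord(\t)$, while $\res_\t(\bx[\t])=\psz^\t(\bx[\t])=\bx[\t]$ since $\bx[\t]\in\RSZ^\t$. So from now on assume $\t\subsetneq\tp$; note $|\tp|\ge|\t|>1$, so $\tp$ is a non-singleton tube of $\HAT$.

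The heart of the argument is the inequality $\al_\t(\bx[\tp])>0$. First I would record the standard fact that, since $\bx[\tp]\in\COrd(\tp)$, every summand $x_j[\tp]-x_i[\tp]$ of $\al_\t(\bx[\tp])$ is nonnegative, and the sum vanishes precisely when $\bx[\tp]$ is constant along every covering relation of $P$ contained in $\t$, hence — because the Hasse diagram of the connected tube $\t$ is connected — precisely when $\bx[\tp]$ is constant on $\t$. So it suffices to show $\bx[\tp]$ is \emph{not} constant on $\t$. Suppose it were. Then $\t$ is a constant tube of $\bx[\tp]$, hence contained in a maximal constant tube $\t'$ of $\bx[\tp]$; such maximal constant tubes are exactly the blocks of the tubing partition $\tubes(\bx[\tp])$, so $\t'\in\tubes(\bx[\tp])$ and $\t\subseteq\t'\subseteq\tp$. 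Since $\tp\in\HAT$ is non-singleton, the defining closure property of $\HAT(\bx)$ (\cref{dfn:bx_to_tubing}) gives $\tubes(\bx[\tp])\subseteq\HAT$, so $\t'\in\HAT$. If $\t'=\tp$ then $\bx[\tp]$ is constant on all of $\tp$, contradicting $\al_\tp(\bx[\tp])=1$; if $\t'\subsetneq\tp$ then $\t'$ is a tube of $\HAT$ with $\t\subseteq\t'\subsetneq\tp$, contradicting the minimality of $\tp=\Pareq(\HAT,\t)$. Either way we reach a contradiction, so $\al_\t(\bx[\tp])>0$.

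Granting this, the identity $\bx[\t]=\res_\t(\bx[\tp])$ falls out of coherence. Since $\t\subsetneq\tp$ and $|\t|>1$, condition~\eqref{eq:coh} provides $\la\in\Rtnn$ with $\psz^\t(\bx[\tp])=\la\,\bx[\t]$. Applying $\al_\t$ to both sides and using $\al_\t\circ\psz^\t=\al_\t$ (\cref{rmk:suppress}) together with $\al_\t(\bx[\t])=1$ yields $\al_\t(\bx[\tp])=\la$, so $\la=\al_\t(\bx[\tp])>0$; dividing gives $\bx[\t]=\tfrac1{\al_\t(\bx[\tp])}\psz^\t(\bx[\tp])=\res_\t(\bx[\tp])$. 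The only genuinely delicate point — and the one I would write out most carefully — is the non-constancy step: it is exactly where one must combine the recursive closure property of $\HAT(\bx)$ with the minimality built into $\Pareq(\HAT,\t)$, and where the hypothesis $\tp=\Pareq(\HAT,\t)$ (rather than an arbitrary tube over $\t$) is used in an essential way. Everything else is bookkeeping with the linear functionals $\al_\t$ and the projections $\psz^\t$.
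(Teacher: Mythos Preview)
Your proof is correct and follows essentially the same approach as the paper: both argue that minimality of $\tp=\Pareq(\HAT,\t)$ forces $\t$ not to lie inside any block of $\tubes(\bx[\tp])\subseteq\HAT$, hence $\bx[\tp]$ is non-constant on $\t$ and $\al_\t(\bx[\tp])>0$, after which coherence~\eqref{eq:coh} yields $\bx[\t]=\res_\t(\bx[\tp])$. Your treatment is slightly more explicit (separating the trivial case $\t\in\HAT$ and spelling out why $\al_\t=0$ forces constancy via connectedness of $\t$), but the argument is the same.
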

\begin{proof}
  Because $\tp\in\HAT$ is minimal by inclusion containing $\t$, we see that $\t$ is not contained in any \pipe in the \piping partition $\HAT[\tp]=\tubes(\bx[\tp])$. In particular, not all coordinates $\{x_i[\tp]\mid i\in\t\}$ are equal. Thus $\al_\t(\bx[\tp])>0$ and $\psz^\t(\bx[\tp])\neq\bnull$. By~\eqref{eq:coh}, we have $\la\bx[\t]=\psz^\t(\bx[\tp])$, and since the right hand side is nonzero, we have $\la>0$. It follows that $\la=\al_\t(\bx[\tp])$, thus $\bx[\t]=\Res{\t}{\tp}(\bx[\tp])$.
\end{proof}

\begin{proposition}\label{prop:Comp=Coh}
We have $\Comp(P)=\Coh(P)$.
\end{proposition}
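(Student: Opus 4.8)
The plan is to establish the two inclusions $\Comp(P)\subseteq\Coh(P)$ and $\Coh(P)\subseteq\Comp(P)$ in turn. For the first, it suffices to show $\ResP(\Coo(P))\subseteq\Coh(P)$ together with $\Coh(P)$ being closed, since then $\Comp(P)=\overline{\ResP(\Oo(P))}\subseteq\overline{\ResP(\Coo(P))}\subseteq\Coh(P)$. The containment $\ResP(\Coo(P))\subseteq\Coh(P)$ is a short computation: for $\bx\in\Coo(P)$ and tubes $\t\subsetneq\tp$ with $|\t|>1$ one has $\psz^\t\circ\psz^\tp=\psz^\t$ (as in \cref{rmk:suppress}), hence
\[
  \psz^\t(\res_\tp(\bx))=\tfrac{1}{\al_\tp(\bx)}\,\psz^\t(\bx)=\tfrac{\al_\t(\bx)}{\al_\tp(\bx)}\,\res_\t(\bx),
\]
and since any tube with more than one element is connected and therefore contains a covering relation of $P$, we get $\al_\t(\bx),\al_\tp(\bx)>0$; thus~\eqref{eq:coh} holds with $\la=\al_\t(\bx)/\al_\tp(\bx)>0$. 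For closedness, note that $\prod_{|\t|>1}\Ord(\Px[\t])$ is compact, and that for a nonzero vector $z$ and an arbitrary vector $y$ the condition ``$y=\la z$ for some $\la\in\Rtnn$'' is equivalent to the conjunction of ``$y$ and $z$ are parallel'' and ``$\langle y,z\rangle\ge0$'', both closed; applying this for each pair $\t\subsetneq\tp$ with $z=\bx[\t]$ (nonzero because $\al_\t(\bx[\t])=1$) and $y=\psz^\t(\bx[\tp])$ realizes $\Coh(P)$ as a finite intersection of closed sets.

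The substantive inclusion is $\Coh(P)\subseteq\Comp(P)$. Given $\bx\in\Coh(P)$, I would exhibit a degenerating family $\bx^{(t)}$ in the $P$-configuration space, indexed by small $t>0$, with $\ResP(\bx^{(t)})\to\bx$ as $t\to0^+$. Regard $\HAT:=\HAT(\bx)$ as the rooted tree of \cref{dfn:tree}, and for $i\in P$ let $P=\t_{i,0}\supsetneq\t_{i,1}\supsetneq\dots\supsetneq\t_{i,d_i}=\{i\}$ be the chain of ancestors of the leaf $\{i\}$ (so $\t_{i,k}$ is the ancestor of $\{i\}$ at tree-depth $k$, and $\t_{i,k+1}$ is the child of $\t_{i,k}$ containing $i$). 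Define
\[
  \bx^{(t)}_i:=\sum_{k=0}^{d_i-1}t^{\,k}\,x_i[\t_{i,k}]\qquad(i\in P),
\]
where $x_i[\t]$ is the $i$-th coordinate of $\bx[\t]\in\Ord(\Px[\t])$. Grouping the indices $i$ by their depth-$k$ ancestor (when it exists) and using $\bx[\t]\in\RSZ^\t$ shows $\bx^{(t)}\in\RSZ^P$. To see that $\bx^{(t)}\in\Coo(P)$ for small $t>0$, fix $i\lp j$ and let $\tau\in\HAT$ be the lowest common ancestor of $\{i\}$ and $\{j\}$, say of depth $m$; since $\tau$ is non-singleton, $\tau=\t_{i,m}=\t_{j,m}$ and the degree-$t^m$ terms of $\bx^{(t)}_i$ and $\bx^{(t)}_j$ are present. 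For $k<m$ the indices $i,j$ lie in a common child $\t_{i,k+1}=\t_{j,k+1}$ of $\t_{i,k}=\t_{j,k}$, which is a block of the tubing partition $\tubes(\bx[\t_{i,k}])=\HAT[\t_{i,k}]$ (cf.\ \cref{dfn:bx_to_tubing}) and hence one on which $\bx[\t_{i,k}]$ is constant; thus the degree-$t^k$ contributions to $\bx^{(t)}_i$ and $\bx^{(t)}_j$ coincide for $k<m$. Moreover $\bx[\tau]$ lies in the relative interior of the face of $\Ord(\Px[\tau])$ labeled by $\tubes(\bx[\tau])=\HAT[\tau]$, a face identified with $\Ord(\tau/\HAT[\tau])$ by \cref{cor:Ord_faces_properties}; since $i,j$ lie in distinct children of $\tau$ and $i\lp j$, the corresponding classes satisfy $[\t_{i,m+1}]\prec[\t_{j,m+1}]$ in $\tau/\HAT[\tau]$, so $x_i[\tau]<x_j[\tau]$. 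Therefore
\[
  \bx^{(t)}_j-\bx^{(t)}_i=t^{m}\bigl(x_j[\tau]-x_i[\tau]\bigr)+O(t^{m+1})
\]
has positive leading coefficient; since there are finitely many relations $i\lp j$, it follows that $\bx^{(t)}\in\Coo(P)\setminus\{\bnull\}$ for all sufficiently small $t>0$.

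It remains to check $\res_\t(\bx^{(t)})\to\bx[\t]$ as $t\to0^+$ for each non-singleton tube $\t$; since $\res_\t$ is invariant under positive rescaling, this yields $\ResP\bigl(\bx^{(t)}/\al_P(\bx^{(t)})\bigr)\to\bx$ with $\bx^{(t)}/\al_P(\bx^{(t)})\in\Oo(P)$, hence $\bx\in\Comp(P)$. Set $\sigma:=\Pareq(\HAT,\t)$, so that $\sigma=\t_{i,m_0}$ for every $i\in\t$, where $m_0$ is the depth of $\sigma$. In $\bx^{(t)}_i$, the terms with $k<m_0$ do not depend on $i\in\t$ (the child $\t_{i,k+1}$ of $\t_{i,k}$ contains $\sigma\supseteq\t$, so $\bx[\t_{i,k}]$ is constant on it) and are annihilated by $\psz^\t$; the degree-$t^{m_0}$ term contributes $t^{m_0}\psz^\t(\bx[\sigma])$; and the remaining terms are $O(t^{m_0+1})$. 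By \cref{lemma:Coh_reconstruct}, $\al_\t(\bx[\sigma])>0$ and $\psz^\t(\bx[\sigma])=\al_\t(\bx[\sigma])\,\bx[\t]$, so $\psz^\t(\bx^{(t)})=t^{m_0}\al_\t(\bx[\sigma])\,\bx[\t]+O(t^{m_0+1})$. Applying $\al_\t$ and using $\al_\t\circ\psz^\t=\al_\t$ together with $\al_\t(\bx[\t])=1$ gives $\al_\t(\bx^{(t)})=t^{m_0}\al_\t(\bx[\sigma])+O(t^{m_0+1})$, which is positive for small $t$; hence $\res_\t(\bx^{(t)})=\psz^\t(\bx^{(t)})/\al_\t(\bx^{(t)})\to\bx[\t]$.

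The main obstacle is the inclusion $\Coh(P)\subseteq\Comp(P)$, and within it the two properties of the family $\bx^{(t)}$: that it remains in the open cone $\Coo(P)$ — which relies on $\bx[\tau]$ sitting in the \emph{relative interior} of the face it labels, so that each strict order relation survives at the leading power of $t$ — and that its restrictions converge to the prescribed coordinates, which is precisely where coherence~\eqref{eq:coh} enters, via \cref{lemma:Coh_reconstruct}, to pin $\psz^\t(\bx[\sigma])$ down to a nonnegative multiple of $\bx[\t]$. The closedness of $\Coh(P)$ and the reverse inclusion are comparatively routine.
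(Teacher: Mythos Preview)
Your proof is correct and follows essentially the same approach as the paper: closedness of $\Coh(P)$ plus the easy check on $\ResP(\Coo(P))$ for one inclusion, and a degenerating family built as a weighted sum of the $\bx[\t]$ over the tree $\HAT(\bx)$ for the other. The paper uses a multi-parameter family $\bt=(t_\t)_{\t\in\T}$ subject to $t_{\tm}\ll t_\t$, whereas you specialize to the single-parameter choice $t_\t=t^{\operatorname{depth}(\t)}$; your verification that $\bx^{(t)}\in\Coo(P)$ and that $\res_\t(\bx^{(t)})\to\bx[\t]$ is more explicit than the paper's, but the arguments are the same in substance.
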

\begin{proof}
First,~\eqref{eq:coh} is satisfied for all points in  $\ResP(\Oo(P))$. We explained in \cref{rmk:blow_up} that~\eqref{eq:coh} is described by polynomial equations and thus it is satisfied for the points in the closure $\Comp(P)$ of $\ResP(\Oo(P))$. Therefore $\Comp(P)\subseteq\Coh(P)$.

Conversely, let $\bx\in\Coh(P)$ and $\T:=\T(\bx)$. The following argument is borrowed from~\cite[Section~3.4]{Sinha}. Choose a vector $\bt=(t_\t)_{\t\in\T}\in\Rtp^{\T}$ such that
\begin{equation}\label{eq:coh_suff_small}
  0<t_\t\ll 1 \quad\text{for all $\t\in\T$,\quad  and}\quad t_{\tm}\ll t_\t \quad\text{for all $\tm,\t\in\T$ such that $\tm\subsetneq\t$}.
\end{equation}
Define a point $\by^\pbt\in \R^P$ by
\begin{equation*}%
  y_i^\pbt:=x_i\tbr[P]+\sum_{\t\in\T:\ i\in\t} t_\t x_i\tbr[\t], \quad\text{for all $i\in P$}.
\end{equation*}
It is easy to see that for $\bt$ sufficiently small satisfying~\eqref{eq:coh_suff_small}, we have $\by^\pbt\in\COo(P)$. Let
\begin{equation*}%
  \bz^\pbt:= \ResP \left(\by^\pbt\right) \quad \in \prod_{|\t|>1} \Ord(\Px[\t]);
\end{equation*}
cf.~\eqref{eq:embP_Coo}. We claim that $\lim_{\bt\to\bnull} \bz^\pbt=\bx$ inside $\prod_{|\t|>1} \Ord(\Px[\t])$,
 where the limit is taken in the above regime~\eqref{eq:coh_suff_small}. In other words, we need to show that $\lim_{\bt\to\bnull} \bz^\pbt[\t]=\bx[\t]$ for each non-singleton \pipe $\t$. This is clear for $\t=P$. Suppose next that $\t\in\T$. Define a point $\bx^\pbt[\t]\in\Coo(\t)$ by
\begin{equation*}%
  x^\pbt_i[\t]:=x_i[\t]+\sum_{\t_-\in\T:\ i\in\t_-\subsetneq\t} \frac{t_{\tm}}{t_\t} x_i\tbr[\t_-] \quad\text{for $i\in\t$}.
\end{equation*}
Thus $\bz^\pbt[\t]=\frac1{\al_\t(\bx^\pbt[\t])}\bx^\pbt[\t]$. By~\eqref{eq:coh_suff_small}, we have $\bx^\pbt[\t]\to\bx[\t]$ inside $\COrd(\t)$ as $\bt\to\bnull$. Thus $\al_\t(\bx^\pbt[\t])\to 1$ and  $\bz^\pbt[\t]\to\bx[\t]$ as $\bt\to\bnull$. We have shown the result for $\t\in\T$. 
For any proper \pipe $\t\notin\T$, the result follows by \cref{lemma:Coh_reconstruct}: for $\tp:=\Pareq(\HAT,\t)$, the map $\Res{\t}{\tp}:\COrd(\tp)\dashrightarrow\Ord(\t)$ is continuous where it is defined, and its domain of definition includes the points $\bx[\tp]$ and $\bz^\pbt[\tp]$.
\end{proof}

\begin{definition}
Given a proper \piping $\T$, let
\begin{equation*}%
  \D_\T:=\{\bx\in\Comp(P)\mid \T(\bx)=\T\}.
\end{equation*}
\end{definition}

Recall from \cref{dfn:tree} that for a proper \piping $\T$ and a \pipe $\t\in\HAT$, we denote by $\HAT[\t]$ the \piping partition of $\t$ consisting of all children of $\t$ in the rooted tree $\HAT$.
\begin{proposition}\label{prop:DT_cell}
For each proper \piping $\T$, we have a homeomorphism
\begin{equation*}%
  \DT\cong \prod_{\t\in\PTubing} \OFace(\Px[\t],\HAT[\t]).
\end{equation*}
\end{proposition}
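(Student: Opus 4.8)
The plan is to exhibit explicit mutually inverse continuous maps between $\DT$ and $\prod_{\t\in\PT}\OFace(\Px[\t],\HAT[\t])$. In the forward direction, given $\bx\in\DT$, I would send $\bx$ to the tuple $(\bx[\t])_{\t\in\PT}$ of its components indexed by tubes in $\PT$. By \cref{dfn:bx_to_tubing} and the definition of $\DT$, for each $\t\in\HAT$ (including $\t=P$) the tubing partition $\tubes(\bx[\t])$ equals exactly the set $\HAT[\t]$ of children of $\t$; hence $\bx[\t]\in\OFace(\Px[\t],\HAT[\t])$, so the map is well defined. Continuity is immediate since it is a restriction of the coordinate projection $\prod_{|\t|>1}\Ord(\Px[\t])\to\prod_{\t\in\PT}\Ord(\Px[\t])$.

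For the inverse, given a tuple $(\by_\t)_{\t\in\PT}$ with $\by_\t\in\OFace(\Px[\t],\HAT[\t])$, I would reconstruct a point $\bx\in\Coh(P)=\Comp(P)$ using \cref{lemma:Coh_reconstruct} as the blueprint: for an arbitrary non-singleton tube $\s$, set $\sp:=\Pareq(\HAT,\s)$ and define $\bx[\s]:=\res_\s(\by_{\sp})$ when $\sp\in\PT\cup\{P\}$, with $\by_P:=\by_\Pa$ understood appropriately. One must check that $\al_\s(\by_{\sp})>0$, which holds precisely because $\sp$ is the minimal tube in $\HAT$ containing $\s$, so $\s$ is not contained in any block of $\tubes(\by_{\sp})=\HAT[\sp]$ and hence the coordinates $\{y_i[\sp]\mid i\in\s\}$ are not all equal. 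Then I would verify that the resulting collection $(\bx[\s])_{|\s|>1}$ is coherent: for $\s\subsetneq\s_+$ with $|\s|>1$, if $\Pareq(\HAT,\s)=\Pareq(\HAT,\s_+)$ then coherence follows from $\psz^\s\circ\psz^{\s_+}=\psz^\s$ (cf. \cref{rmk:suppress}); if they differ, one chases through the tower of tubes between $\s$ and $\s_+$ in $\HAT$ and uses that each intermediate $\res$ is (up to a positive scalar) $\psz$ followed by normalization. Finally I would check that $\T(\bx)=\T$, i.e. that applying \cref{dfn:bx_to_tubing} to the reconstructed $\bx$ recovers exactly $\HAT$: this follows because for $\t\in\HAT$ we arranged $\tubes(\bx[\t])=\HAT[\t]$, so the recursive closure generating $\HAT(\bx)$ produces precisely the tubes of $\HAT$ and no others. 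Continuity of the inverse follows from continuity of each $\res_\s$ on its domain of definition, together with the fact that on $\prod_{\t\in\PT}\OFace(\Px[\t],\HAT[\t])$ we stay inside that domain by the $\al_\s>0$ check.

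I expect the main obstacle to be the verification that the reconstruction lands in $\Coh(P)$ and that $\T(\bx)=\T$, rather than the topology. The subtle point is that $\Coh(P)$ imposes the coherence constraint \eqref{eq:coh} for \emph{all} pairs $\t\subsetneq\t_+$, not merely for pairs adjacent in the tree $\HAT$, so one must confirm that coherence along $\HAT$-edges propagates to arbitrary nested pairs; this is where the identities $\psz^\t\circ\psz^{\t_+}=\psz^\t$ and the compatibility of $\res$ with composition do the work. A second delicate step is ensuring the reconstruction and the forward map are genuinely inverse to each other: starting from $\bx\in\DT$, applying the forward map and then reconstructing must return $\bx$, which again reduces to \cref{lemma:Coh_reconstruct} showing that every component $\bx[\s]$ of a coherent point is already determined by the components indexed by $\HAT$. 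Once these algebraic compatibilities are in place, the homeomorphism assertion is routine, since all maps involved are continuous rational maps restricted to domains on which the relevant denominators are bounded away from zero.
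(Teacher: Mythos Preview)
Your proposal is correct and follows essentially the same approach as the paper's proof: the forward map is coordinate projection, the inverse is built via $\res_\t(\bx[\tp])$ with $\tp=\Pareq(\HAT,\t)$, and coherence is verified by splitting on whether the two $\Pareq$ values coincide. One sharpening worth noting: in the case where $\t':=\Pareq(\HAT,\t)\subsetneq\tp':=\Pareq(\HAT,\tp)$, the paper does not ``chase through a tower'' but observes directly that $\psz^{\t}(\bx[\tp])=0$ (since $\t\subseteq\t'$ lies inside a single block of $\HAT[\tp']=\tubes(\bx[\tp'])$, the coordinates of $\bx[\tp']$---hence of $\bx[\tp]$, which is proportional to $\psz^{\tp}(\bx[\tp'])$---agree on $\t$), so~\eqref{eq:coh} holds with $\la=0$; your tower-chasing phrasing is vaguer and could mislead, as the intermediate $\res$ maps along such a tower are undefined precisely because these projections vanish.
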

\begin{proof}
Let $\bx\in\DT$. By \cref{dfn:bx_to_tubing}, we have 
\begin{equation}\label{eq:bx_prod_OFace}
  (\bx\tbr[\t])_{\t\in\PTubing} \in \prod_{\t\in\PTubing} \OFace(\Px[\t],\HAT[\t]).
\end{equation}
We claim that the map $\bx\mapsto  (\bx\tbr[\t])_{\t\in\PTubing}$ is a homeomorphism. To describe the inverse of this map, choose a point $(\bx\tbr[\t])_{\t\in\PTubing}$ as in~\eqref{eq:bx_prod_OFace}. Take any non-singleton \pipe $\t$ and let $\tp:=\Pareq(\HAT,\t)$. By \cref{lemma:Coh_reconstruct}, we must set $\bx[\t]:=\Res{\t}{\tp}(\bx\tbr[\tp])$. This defines a point $\bx\in\prod_{|\t|>1} \Ord(\Px[\t])$. We claim that $\bx\in\Coh(P)$, i.e., that it satisfies~\eqref{eq:coh}.

Let $\t\subsetneq\tp$ be arbitrary \pipes with $|\t|>1$. Our goal is to show that $\psz^\t(\bx[\tp])=\la\bx[\t]$ for some $\la\in\Rtnn$. Let $\t':=\Pareq(\HAT,\t)$ and $\tp':=\Pareq(\HAT,\tp)$, thus $\t'\subseteq\tp'$. Suppose first that $\t'\subsetneq\tp'$. Then $\t'$ is a subset of some \pipe in $\HAT[\tp']$, and thus $\psz^{\t'}(\bx[\tp'])=0$. Since $\bx[\tp]$ is proportional to $\psz^{\tp}(\bx[\tp'])$, and since $\psz^{\t}\circ\psz^{\tp}=\psz^{\t}\circ\psz^{\t'}=\psz^\t$ (cf. \cref{rmk:suppress}), it follows that $\psz^\t(\bx[\tp])=0$. Thus~\eqref{eq:coh} holds with $\la=0$. Suppose now that $\t'=\tp'$ and let $\by:=\bx[\t']=\bx[\tp']$. Then $\bx[\t]$ is a positive scalar multiple of $\psz^\t(\by)$ and $\bx[\tp]$ is a positive scalar multiple of $\psz^{\tp}(\by)$. Again using $\psz^{\t}\circ\psz^{\tp}=\psz^\t$, we find that $\bx[\t]$ is a positive scalar multiple of $\psz^\t(\bx[\tp])$. Thus $\bx\in\Coh(P)=\Comp(P)$. Moreover, \cref{dfn:bx_to_tubing} implies that $\bx\in\DT$.

We have constructed a bijection between $\DT$ and $\prod_{\t\in\PTubing} \OFace(\Px[\t],\HAT[\t])$. This bijection and its inverse are clearly continuous, thus the two spaces are homeomorphic.
\end{proof}
\begin{corollary}\label{cor:Comp_cell_decn}
We have a disjoint union%
\begin{equation*}%
  \Comp(P)=\bigsqcup_{\T} \DT,
\end{equation*}
where for each proper \piping $\T$, the cell $\DT$ is homeomorphic to $\R^{|P|-|\T|-2}$. 
\end{corollary}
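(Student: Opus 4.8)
The plan is to assemble the corollary from the two results just proved, \cref{prop:Comp=Coh} and \cref{prop:DT_cell}, together with the elementary observation that distinct proper tubings give disjoint strata and that these strata exhaust $\Comp(P)$.

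\medskip

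First I would observe that the decomposition $\Comp(P)=\bigsqcup_\T \DT$ is essentially tautological: by \cref{prop:Comp=Coh} every point $\bx\in\Comp(P)$ lies in $\Coh(P)$, and by \cref{le:bx_to_tubing} the associated set $\T(\bx)$ is a well-defined proper tubing; hence $\bx\in\D_{\T(\bx)}$. Since the defining condition of $\DT$ is $\T(\bx)=\T$, the sets $\DT$ are pairwise disjoint for distinct $\T$, and their union is all of $\Comp(P)$. This gives the disjoint union statement with no further work.

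\medskip

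Next I would identify each cell with a Euclidean space. By \cref{prop:DT_cell} we have a homeomorphism $\DT\cong\prod_{\t\in\PTubing}\OFace(\Px[\t],\HAT[\t])$. Each factor $\OFace(\Px[\t],\HAT[\t])$ is, by~\eqref{eq:tubes_dfn} and \cref{prop:Ord_faces}, the relative interior of the face $\OFacecl(\Px[\t],\HAT[\t])$ of the order polytope $\Ord(\Px[\t])$, hence homeomorphic to an open ball of dimension $\dim\OFacecl(\Px[\t],\HAT[\t])=|\HAT[\t]|-2$, i.e.\ to $\R^{|\HAT[\t]|-2}$. Therefore $\DT$ is homeomorphic to $\R^{d}$ with $d=\sum_{\t\in\PTubing}\bigl(|\HAT[\t]|-2\bigr)$. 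It remains to check the arithmetic identity $\sum_{\t\in\PTubing}(|\HAT[\t]|-2)=|P|-|\T|-2$. Here the main (and only) obstacle is this bookkeeping: using the rooted-tree structure of \cref{dfn:tree}, the tree $\HAT$ has as its node set $\{P\}\cup\T\cup\{\text{singletons}\}$, so $|P|$ leaves, $1$ root, and $|\T|$ internal non-root nodes; summing $|\HAT[\t]|$ (the number of children) over all $|\T|+1$ internal nodes counts every non-root node of $\HAT$ exactly once, giving $\sum_{\t\in\PTubing}|\HAT[\t]|=|P|+|\T|$ (the $|P|$ leaves plus the $|\T|$ internal non-root tubes), while subtracting $2$ from each of the $|\T|+1$ terms removes $2(|\T|+1)$; hence $d=|P|+|\T|-2(|\T|+1)=|P|-|\T|-2$, as claimed. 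This matches the face-dimension formula of \cref{cor:poset_ass_properties}\ref{PApr:faces_dim}, as expected.

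\medskip

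I do not anticipate any genuine difficulty: the statement is a direct corollary packaging \cref{prop:Comp=Coh} and \cref{prop:DT_cell}, and the only thing to verify carefully is the dimension count, which is a one-line edge-counting argument on the tree $\HAT$.
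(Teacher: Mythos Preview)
Your proposal is correct and follows essentially the same approach as the paper: invoke \cref{prop:DT_cell} to express $\DT$ as a product of open faces, use \cref{prop:Ord_faces} for the dimension of each factor, and verify the identity $\sum_{\t\in\PT}(|\HAT[\t]|-2)=|P|-|\T|-2$ via the tree structure of $\HAT$ (each non-root tube is a child of exactly one parent). Your explicit justification of the disjoint-union statement via \cref{prop:Comp=Coh} and \cref{le:bx_to_tubing} is a nice clarification that the paper leaves implicit.
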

\begin{proof}
By \cref{prop:DT_cell}, $\DT$ is homeomorphic to an open ball. By \cref{prop:Ord_faces}, its dimension is given by 
\begin{equation*}%
  \sum_{\t\in\PT} (|\HAT[\t]|-2)=  \sum_{\t\in\PT} |\HAT[\t]| -2|\T|-2=|\T|+|P|-2|\T|-2=|P|-|\T|-2.
\end{equation*}
The first and the third equalities are trivial, and the second equality follows from the fact that each \pipe $\tm\in\HAT\setminus\{P\}$ appears in $\HAT[\t]$ for exactly one $\t\in\PT$.
\end{proof}

\begin{lemma}\label{lemma:DT_closure}
The closure of each cell $\DT$ in $\Comp(P)$ is given by
\begin{equation*}%
  \Closure(\D_\Tubing)=\bigsqcup_{\Tubing'\supseteq\Tubing} \D_{\Tubing'},
\end{equation*}
where the union is taken over proper \pipings $\T'$ containing $\T$.
\end{lemma}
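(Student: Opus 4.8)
The plan is to prove the two inclusions $\Closure(\D_\Tubing)\subseteq\bigsqcup_{\Tubing'\supseteq\Tubing}\D_{\Tubing'}$ and $\bigsqcup_{\Tubing'\supseteq\Tubing}\D_{\Tubing'}\subseteq\Closure(\D_\Tubing)$ separately, working inside the compact metric space $\prod_{|\t|>1}\Ord(\Px[\t])$; since $\Comp(P)$ is closed there (being a closure), closures and sequential limits may be taken in the ambient product. As the cells $\D_{\Tubing'}$ partition $\Comp(P)$ by \cref{cor:Comp_cell_decn}, the first inclusion is equivalent to $\Tubing\subseteq\T(\bx)$ for all $\bx\in\Closure(\D_\Tubing)$, and the second to $\D_{\Tubing'}\subseteq\Closure(\D_\Tubing)$ for each individual proper tubing $\Tubing'\supseteq\Tubing$.

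For the first inclusion, let $\bx^{(n)}\to\bx$ with $\bx^{(n)}\in\D_\Tubing$. Since $\HAT(\bx)$ consists of $\T(\bx)$ together with $P$ and the singletons (\cref{dfn:bx_to_tubing}), it is enough to show $\HAT\subseteq\HAT(\bx)$ for $\HAT:=\widehat\Tubing$. I would prove, by induction down the rooted tree $\HAT$ (\cref{dfn:tree}) on $|\tp|$, that every $\tp\in\HAT$ lies in $\HAT(\bx)$. The root $P$ does; for the inductive step, take $\tp\in\HAT\cap\HAT(\bx)$ with $|\tp|>1$ and write $\HAT[\tp]=\{c_1,\dots,c_p\}$ ($p\ge2$) for its children, which partition $\tp$. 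By \cref{prop:DT_cell}, $\bx^{(n)}[\tp]$ is constant on each $c_j$, so every tube $\mu$ that is a union of at least two of the $c_j$ is not constant on $\bx^{(n)}[\tp]$ and has $\tp$ as its smallest $\HAT$-ancestor; hence \cref{lemma:Coh_reconstruct} gives $\bx^{(n)}[\mu]=\Res{\mu}{\tp}(\bx^{(n)}[\tp])$, which lies in $\OFace(\Px[\mu],\{c_j\mid c_j\subseteq\mu\})$. Letting $n\to\infty$ and applying \cref{prop:Ord_faces}, every block of $\tubes(\bx[\mu])$ is a union of $c_j$'s. Fixing a child $c$ and setting $\mu_0:=\tp$ and $\mu_{i+1}:=$ the block of $\tubes(\bx[\mu_i])$ containing $c$, we obtain tubes $\mu_i\in\HAT(\bx)$ (as $\HAT(\bx)$ is closed under $\t\mapsto\tubes(\bx[\t])$), each a union of $c_j$'s containing $c$, with $\mu_0\supsetneq\mu_1\supsetneq\cdots$ strictly decreasing (since each $\bx[\mu_i]$ is a point of $\Ord(\Px[\mu_i])$, so $\tubes(\bx[\mu_i])$ has at least two blocks). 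Thus $\mu_k=c$ for some $k$, whence $c\in\HAT(\bx)$.

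For the second inclusion, I would first reduce to $\Tubing'=\Tubing\sqcup\{\sigma\}$: deleting a tube from a proper tubing again gives a proper tubing (the acyclicity digraph of a subcollection is the induced subgraph of $D_{\Tubing'}$), so $\Tubing$ and $\Tubing'$ are joined by a chain $\Tubing=\Tubing_0\subsetneq\cdots\subsetneq\Tubing_m=\Tubing'$ of proper tubings each adding one tube, and $\D_{\Tubing_{i+1}}\subseteq\Closure(\D_{\Tubing_i})$ implies $\Closure(\D_{\Tubing_{i+1}})\subseteq\Closure(\D_{\Tubing_i})$. In the single-tube case, set $\tp:=\Pareq(\HAT,\sigma)$; one checks that $\tp\in\HAT$ is non-singleton, that $\tubes(\bx[\sigma])$ equals the set of $\HAT$-children of $\tp$ contained in $\sigma$, and that $\widehat{\Tubing'}[\t]=\HAT[\t]$ for all $\t\in\HAT$, $\t\neq\tp$. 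For small $s>0$ define $\bx^{(s)}\in\D_\Tubing$ via \cref{prop:DT_cell} by $\bx^{(s)}[\t]:=\bx[\t]$ for $\t\in\PT$, $\t\neq\tp$, and $\bx^{(s)}[\tp]:=\res_\tp\big((x_i[\tp]+s\,x_i[\sigma]\,\one[i\in\sigma])_{i\in\tp}\big)$; using that $\bx[\sigma]$ is order-preserving on $\sigma$ and that $\sigma$ is a block of $\tubes(\bx[\tp])$, one verifies $\bx^{(s)}[\tp]\in\OFace(\Px[\tp],\HAT[\tp])$ for small $s$. A short computation with the identity $\psz^\t\circ\psz^{\tp}=\psz^\t$ (compare \cref{rmk:suppress}) then shows that the reconstructed point has $\bx^{(s)}[\sigma]=\bx[\sigma]$ and $\bx^{(s)}[\t]\to\bx[\t]$ as $s\to0$ for every non-singleton tube $\t$ — each $\bx^{(s)}[\t]$ being either literally $\bx[\t]$ or the image of $\bx^{(s)}[\tp]$ under a restriction map defined and continuous at $\bx[\tp]$ — so $\bx^{(s)}\to\bx$ and $\bx\in\Closure(\D_\Tubing)$.

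I expect the main obstacle to be the bookkeeping needed to keep the tree combinatorics synchronized with the topological limits. For the first inclusion the delicate point is that several children of $\tp$ may merge into a single block of $\tubes(\bx[\tp])$ in the limit, and one must argue that each of them nevertheless reappears, one level deeper, as a genuine tube of $\HAT(\bx)$. For the second, the delicate point is to check that the single perturbation at $\tp$ propagates correctly through the reconstruction of \cref{lemma:Coh_reconstruct} to every blown-up coordinate $\bx[\t]$ with $\t\notin\HAT$, without creating a $\tfrac00$ indeterminacy in the rational maps $\Res{\t}{\tp}$. The remaining ingredients — the face combinatorics of \cref{prop:Ord_faces} and the elementary identities among $\avg_\t,\psz^\t,\al_\t,\res_\t$ — are routine.
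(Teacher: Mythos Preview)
Your proposal is correct, and both inclusions differ in organization from the paper's argument.

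For the first inclusion, the paper fixes a single $\t\in\T$, sets $\tp:=\Pareq(\HAT(\bx),\t)$, and shows (via \cref{lemma:Coh_reconstruct}) that for every $\by\in\D_\T$ the tube $\t$ lies inside a single block of $\tubes(\by[\tp])$; passing to the limit forces $\t$ into a block of $\tubes(\bx[\tp])\subseteq\HAT(\bx)$, contradicting the minimality of $\tp$ unless $\t=\tp$. This is a two-line contradiction. Your descending induction down the tree $\HAT$, with the descent $\mu_0\supsetneq\mu_1\supsetneq\cdots$ inside $\HAT(\bx)$, is more elaborate but equally valid; the key point---that each relevant $\bx[\mu]$ lies in the closed face $\OFacecl(\Px[\mu],\{c_j\subseteq\mu\})$ because $\bx^{(n)}[\mu]$ does---is the same closure-of-face reasoning the paper uses, just iterated.

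For the second inclusion, the paper perturbs all tubes of $\T'\setminus\T$ simultaneously using a vector $\bt=(t_\t)_{\t\in\T'\setminus\T}$ in the regime~\eqref{eq:coh_suff_small}, mirroring the proof of \cref{prop:Comp=Coh}. Your reduction to the one-tube case $\T'=\T\sqcup\{\sigma\}$ via a chain of proper tubings is cleaner: it avoids the multi-scale ``$\ll$'' bookkeeping, and the single-parameter perturbation at $\tp$ together with $\res_\t\circ\res_\sigma=\res_\t$ handles all cases at once. The price is an outer induction on $|\T'\setminus\T|$, which the paper's all-at-once construction absorbs.
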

\begin{proof}
Suppose that a point $\bx\in\D_{\Tubing'}$ belongs to $\Closure(\D_\Tubing)$. First, we show that $\Tubing'\supseteq\Tubing$. Let $\t\in\T$ and $\tp:=\Pareq(\HAT',\t)$. We need to show that $\t\in\T'$. If $\t=\tp$ then we are done, so assume $\t\subsetneq\tp$. 

We claim that for any point $\by\in\DT$, $\t$ is contained inside some \pipe $\t'\in\tubes(\by[\tp])$ (which therefore satisfies $\t'\subsetneq\tp$). Indeed, by \cref{lemma:Coh_reconstruct}, $\by[\tp]$ is obtained as $\res_{\tp}(\by[\tp'])$ for $\tp':=\Pareq(\HAT,\tp)$. We see that $\t,\tp'\in\HAT$ and $\t\subsetneq \tp\subseteq \tp'$, so $\t$ is contained inside some \pipe in $\tubes(\by[\tp'])$. Since $\t\subsetneq\tp$, it follows that $\t$ is contained inside some $\t'\in\tubes(\by[\tp])$.

Since $\bx$ is the limit of a sequence of points in $\DT$, we see that $\t$ is contained inside some \pipe $\t''\in\tubes(\bx[\tp])$. By \cref{dfn:bx_to_tubing}, we have $\t''\in\HAT'$, and since $\t\subseteq\t''\subsetneq\tp$, we get a contradiction with the minimality of $\tp$. We have shown that $\T'\supseteq \T$.

Conversely, suppose that $\bx\in\D_{\T'}$ for some $\T'\supseteq\T$. Our goal is to show that $\bx\in\Closure(\DT)$. We modify the construction in the proof of \cref{prop:Comp=Coh}. Choose a vector $\bt=(t_\t)_{\t\in\T'\setminus\T}$. For $\t\in\PT$, define a vector $\by^\pbt[\t]$ by
\begin{equation*}%
  y_i^\pbt[\t]:=x_i[\t]+\sum_{\tm\in\T'\setminus\T:\ i\in\tm\subsetneq \t} t_{\tm}x_i[\tm] \quad\text{for $i\in\t$.}
\end{equation*}
For $\bt\in\Rtp^{\T'\setminus\T}$ sufficiently small satisfying~\eqref{eq:coh_suff_small}, we get $\by^\pbt[\t]\in\COrd(\t)\setminus\{\bnull\}$. Let
\begin{equation*}%
  \bz^\pbt[\t]:=\res_\t(\by^\pbt[\t])\quad\in\Ord(\t).
\end{equation*}
We see that $\bz^\pbt[\t]\in\OFace(\Px[\t],\HAT[\t])$. Repeating this for each $\t\in\PT$, we obtain a point in $\prod_{\t\in\PTubing} \OFace(\Px[\t],\HAT[\t])$, which by \cref{prop:DT_cell} gives a point $\bz^\pbt\in\DT$. Similarly to the argument in the proof of \cref{prop:Comp=Coh}, we get $\bz^\pbt\to\bx$ as $\bt\to\bnull$.
\end{proof}

\subsection{Collapsing and expanding maps}\label{sec:collapse_expand}
We now come to the most technical part of our proof. We will construct a family of maps which will be later used to show that the closure $\Closure(\DT)$ of each cell is a topological manifold with boundary. Throughout this section, we fix two \pipes $\t\subsetneq \tp$ with $|\t|>1$. 

\begin{definition}
Given a proper \piping $\T$, we say that $\t,\tp$ are \emph{adjacent in $\HAT$} if $\t,\tp\in\HAT$ and $\tp$ is the parent of $\t$ in $\HAT$, i.e., $\t\in\HAT[\tp]$. We denote by $\adj(\t,\tp)$ the set of proper \pipings $\T$ such that $\t,\tp$ are adjacent in $\HAT$.  We let
\begin{equation*}%
  \Adj(\t,\tp):= \bigsqcup_{\T\in\adj(\t,\tp)} \DT, \quad \Adjp(\t,\tp):= \bigsqcup_{\T\in\adj(\t,\tp)} \DT\sqcup \D_{\T\setminus\{\t\}}.
\end{equation*}
\end{definition}

Next, we write
\begin{equation}\label{eq:ijP_jiP}
  \ijP:=\{(i,j)\in\t\times(\tp\setminus\t)\mid i\lP j\} \quad\text{and}\quad \jiP:=\{(j,i)\in(\tp\setminus\t)\times\t\mid j\lP i\}.
\end{equation}

For $\bx\in\Adj(\t,\tp)$, let%
\begin{equation}\label{eq:tmax_dfn}
  \tmax_{\t,\tp}(\bx):=\sup\left\{t\in\R_{\geq0}\middle | 
\begin{tabular}{ccl}
                                                    $\coordx(\tp,i)+t\coordx(\t,i)<\coordx(\tp,j)$ &\quad & for $(i,j)\in\ijP$, and \\
 $\coordx(\tp,j)<\coordx(\tp,i)+t\coordx(\t,i)$& & for $(j,i)\in\jiP$
                                                  \end{tabular} \right\}.
\end{equation}
Note that the set on the right hand side of~\eqref{eq:tmax_dfn} is nonempty since it contains $t=0$. Thus we get a map $\tmax_{\t,\tp}:\Adj(\t,\tp)\to [0,\infty]$. We treat $[0,\infty]$ as a topological space homeomorphic to a line segment.
\begin{lemma}\label{lemma:tmax_cont}
The map $\tmax_{\t,\tp}$ is continuous on $\Adj(\t,\tp)$ and has image in $(0,\infty]$.
\end{lemma}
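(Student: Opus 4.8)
The plan is to analyze the two pieces of the claim separately: continuity of $\tmax_{\t,\tp}$ as a map into the compact interval $[0,\infty]$, and the assertion that its values lie in $(0,\infty]$, i.e.\ that $\tmax_{\t,\tp}(\bx)>0$ everywhere on $\Adj(\t,\tp)$. I would first unwind the definition on a single cell $\DT$ with $\T\in\adj(\t,\tp)$. By \cref{prop:DT_cell}, a point $\bx\in\DT$ is encoded by the tuple $(\bx\tbr[\t'])_{\t'\in\PT}$, and by \cref{lemma:Coh_reconstruct} the coordinates $\coordx(\tp,i)$ and $\coordx(\t,i)$ appearing in~\eqref{eq:tmax_dfn} depend continuously on $\bx$ (they are obtained from the recorded data of $\HAT$ via the continuous restriction maps $\res_{\t'}$, whose domains of definition contain the relevant points). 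So on each cell the strict inequalities in~\eqref{eq:tmax_dfn} cut out, for fixed $\bx$, a set of the form $[0,T(\bx))$ or $[0,\infty]$, with $T(\bx)=\min$ over $(i,j)\in\ijP\sqcup\jiP$ of the positive thresholds $\frac{\coordx(\tp,j)-\coordx(\tp,i)}{\coordx(\t,i)}$ (restricted to those indices where the denominator is nonzero and has the right sign, and $+\infty$ otherwise). Each such threshold is a continuous $(0,\infty]$-valued function of $\bx$ on $\DT$ (the numerator is $>0$ or $<0$ strictly, since $\bx\tbr[\tp]\in\OaFace(\tp,\HAT[\tp])$ and $\t$ is one block of $\HAT[\tp]$, so the points of $\t$ and $\tp\setminus\t$ are strictly separated in $\bx\tbr[\tp]$), and a finite minimum of continuous $(0,\infty]$-valued functions is continuous and $(0,\infty]$-valued. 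This already gives both continuity on each individual cell and the bound $\tmax_{\t,\tp}>0$ pointwise.

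The substantive point is continuity \emph{across} cells: $\Adj(\t,\tp)$ is a union of cells $\DT$ of varying dimension, and one must check $\tmax_{\t,\tp}$ is continuous on the whole space with the subspace topology from $\prod_{|\t'|>1}\Ord(\t')$. For this I would take a sequence $\bx^{(N)}\to\bx$ with $\bx^{(N)}\in\D_{\T_N}$, $\bx\in\D_\T$, all in $\adj(\t,\tp)$; then in $\prod_{|\t'|>1}\Ord(\t')$ we have $\bx^{(N)}\tbr[\tp]\to\bx\tbr[\tp]$ and $\bx^{(N)}\tbr[\t]\to\bx\tbr[\t]$ coordinatewise. The thresholds $\frac{x_j[\tp]-x_i[\tp]}{x_i[\t]}$ are then continuous functions of these coordinates \emph{wherever the denominator is nonzero}; the only subtlety is indices $i$ with $x_i[\t]=0$ at the limit point $\bx$. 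But $x_i[\t]=0$ cannot happen for $i\in\t$ unless all of $\t$'s coordinates in $\bx\tbr[\t]$ coincide, which is impossible since $\bx\tbr[\t]\in\OaFace(\t,\HAT[\t])$ and $|\t|>1$, so $\tubes(\bx\tbr[\t])\neq\{\t\}$, meaning the coordinates $(x_i[\t])_{i\in\t}$ are not all equal — however they need not be all \emph{nonzero}. So I would instead argue at the level of the defining inequality directly: $\tmax_{\t,\tp}(\bx)$ is $\sup$ of the set of $t$ for which a finite system of strict affine-in-$t$ inequalities holds, with coefficients continuous in $\bx$; such a $\sup$ is a lower-semicontinuous function of $\bx$ in general, and one gets upper-semicontinuity (hence continuity) provided the system is "regular" at $t=\tmax_{\t,\tp}(\bx)$, i.e.\ at least one inequality becomes an equality with nonzero $t$-coefficient when $\tmax<\infty$. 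The main obstacle is precisely verifying this regularity uniformly — ruling out the degenerate situation where $\tmax_{\t,\tp}(\bx)$ is finite only because of a "boundary at infinity" artifact — and I expect the cleanest route is to compare with the closure structure: combine \cref{lemma:DT_closure} (which says $\bx\in\Closure(\D_{\T_N})$ for cofinally many $N$, so one can also approximate within a single larger cell) with the explicit family $\bz^{\pbt}$ constructed there, showing that the locus $\tmax_{\t,\tp}(\bx)=\infty$ corresponds exactly to the tube $\tp$ being "splittable away from $\t$" in a way visible in $\HAT$, and that $\tmax_{\t,\tp}$ restricted to each such stratum matches up continuously.

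Concretely, I would organize the write-up as: (1) give the explicit formula $\tmax_{\t,\tp}(\bx)=\min_{(i,j)}\bigl(x_j[\tp]-x_i[\tp]\bigr)/x_i[\t]$ over the indices $(i,j)\in\ijP$ with $x_i[\t]>0$ together with $(j,i)\in\jiP$ with $x_i[\t]<0$, with the convention that the minimum over the empty set is $\infty$; (2) show each individual term is a continuous $(0,\infty]$-valued function on the open set of $\Adj(\t,\tp)$ where its denominator does not vanish, using strict separation of $\t$ from $\tp\setminus\t$ in $\bx\tbr[\tp]$; (3) handle the walls $\{x_i[\t]=0\}$ by noting that as $x_i[\t]\to 0^+$ the $i$-term $\to\infty$, so it drops out of the $\min$ continuously (an $\infty$-valued term never lowers a finite minimum, and matches a neighbouring formula where that index is simply omitted); (4) conclude continuity on all of $\Adj(\t,\tp)$ and $\tmax_{\t,\tp}\in(0,\infty]$ since every term in the $\min$ is $>0$ and the empty $\min$ is $\infty$. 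The positivity is immediate once (1) is set up; continuity is the real content, and step (3) — reconciling the formula across the hyperplanes where denominators vanish — is where I expect the bookkeeping to be most delicate.
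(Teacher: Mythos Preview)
Your final plan (steps (1)--(4)) is correct and close in spirit to the paper's proof, but the paper uses a cleaner device that eliminates your step~(3) entirely. Instead of writing $\tmax_{\t,\tp}$ as a minimum of thresholds $\frac{x_j[\tp]-x_i[\tp]}{x_i[\t]}$ (whose denominators can vanish), the paper passes to the reciprocal $1/\tmax_{\t,\tp}$ and writes it as a maximum of the functions $f_{i,j}^+:=\max\bigl(\frac{x_i[\t]}{x_j[\tp]-x_i[\tp]},0\bigr)$ for $(i,j)\in\ijP$, together with analogous $g_{i,j}^+$ for $(j,i)\in\jiP$. The point is that the \emph{denominators} $x_j[\tp]-x_i[\tp]$ are strictly positive everywhere on $\Adj(\t,\tp)$ (since $i\in\t$ and $j\in\tp\setminus\t$ lie in different blocks of $\HAT[\tp]$), so each $f_{i,j}$ is a genuine continuous real-valued function on all of $\Adj(\t,\tp)$; taking $\max(\cdot,0)$ and then a finite maximum yields a continuous function into $[0,\infty)$, which is exactly the statement that $\tmax_{\t,\tp}$ is continuous into $(0,\infty]$. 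Your argument in step~(3) (each threshold, extended by $\infty$ where its denominator vanishes, is continuous as a $(0,\infty]$-valued function) is correct, but the inversion makes it unnecessary.

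Separately, your second paragraph about ``continuity across cells'' and the appeal to \cref{lemma:DT_closure} is superfluous. The quantities $x_i[\tp]$ and $x_i[\t]$ are simply coordinate projections of the ambient product $\prod_{|\t'|>1}\Ord(\t')$, hence automatically continuous on all of $\Comp(P)\supseteq\Adj(\t,\tp)$; there is no cell-by-cell analysis or approximation argument needed. Once you have this, either your formula or the paper's reciprocal formula gives continuity immediately on the whole of $\Adj(\t,\tp)$.
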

\begin{proof}
We will show instead that $\frac1{\tmax_{\t,\tp}}$ is a continuous function $\Adj(\t,\tp)\to [0,\infty)$. Observe that $\coordx(\tp,i)<\coordx(\tp,j)$ for all $\bx\in\Adj(\t,\tp)$ and $(i,j)\in\ijP$. Thus  $f_{i,j}(\bx):=\frac{\coordx(\t,i)}{\coordx(\tp,j)-\coordx(\tp,i)}$ is a continuous function $\Adj(\t,\tp)\to\R$, and therefore $f^+_{i,j}(\bx):=\max(f_{i,j}(\bx),0)$ is a continuous function with image in $\R_{\geq0}$. Similarly, for $(j,i)\in\jiP$, let $g_{i,j}(\bx):=\frac{-\coordx(\t,i)}{\coordx(\tp,i)-\coordx(\tp,j)}$ and $g^+_{i,j}(\bx):=\max(g_{i,j}(\bx),0)$. It follows from~\eqref{eq:tmax_dfn} that %
\begin{equation*}%
  \frac1{\tmax_{\t,\tp}(\bx)}=\min \left( \{f^+_{i,j}(\bx)\mid (i,j)\in\ijP\}\cup\{g^+_{i,j}(\bx)\mid (j,i)\in\jiP\}\right) \in[0,\infty).
\end{equation*}
In particular, $\frac1{\tmax_{\t,\tp}}$ is continuous since it is the minimum of several continuous functions.
\end{proof}

Define the \emph{expanding set}
\begin{equation*}%
  \Ex(\t,\tp):=\{(\bx,t)\in\Adj(\t,\tp)\times [0,\infty)\mid 0\leq t< \tmax_{\t,\tp}(\bx)\}.
\end{equation*}
 Similarly, define the \emph{collapsing set}
\begin{equation}\label{eq:Coll_dfn}
  \Coll(\t,\tp):=\left\{\bx\in\Adjp(\t,\tp) \middle|  \begin{tabular}{l}
                                                    $\avg_\t(\bx\tbr[\tp])<\coordx(\tp,j)$\quad for $(i,j)\in\ijP$, and \\
$\coordx(\tp,j)<\avg_\t(\bx\tbr[\tp])$\quad for $(j,i)\in\jiP$
                                                  \end{tabular} \right\}.
\end{equation}
The following result is a straightforward consequence of the definitions and \cref{lemma:tmax_cont}.
\begin{lemma}\label{lemma:Ex_Adj_open_single_ttp}
\ 
  \begin{theoremlist}
  \item $\Ex(\t,\tp)$ is an open subset of $\Adj(\t,\tp)\times [0,\infty)$ containing $\Adj(\t,\tp)\times\{0\}$.
  \item $\Coll(\t,\tp)$ is an open subset of $\Adjp(\t,\tp)$ containing $\Adj(\t,\tp)$. \qed
\end{theoremlist}
\end{lemma}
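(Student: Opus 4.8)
The plan is to deduce both assertions directly from \cref{lemma:tmax_cont} together with the face structure of order polytopes recorded in \cref{prop:Ord_faces} and \cref{cor:Ord_faces_properties}.

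\emph{Part~(i).} I would reuse the function $h:=1/\tmax_{\t,\tp}$, which by the proof of \cref{lemma:tmax_cont} is continuous from $\Adj(\t,\tp)$ to $[0,\infty)$. Then $(\bx,t)\mapsto t\cdot h(\bx)$ is continuous on $\Adj(\t,\tp)\times[0,\infty)$, and with the usual conventions on $[0,\infty]$ the condition $t<\tmax_{\t,\tp}(\bx)$ is equivalent to $t\cdot h(\bx)<1$ (the case $\tmax_{\t,\tp}(\bx)=\infty$, i.e.\ $h(\bx)=0$, being automatic on both sides). Hence $\Ex(\t,\tp)$ is the preimage of the open set $(-\infty,1)$ under a continuous map, and is therefore open in $\Adj(\t,\tp)\times[0,\infty)$. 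Since $\tmax_{\t,\tp}$ takes values in $(0,\infty]$, we have $0<\tmax_{\t,\tp}(\bx)$ for every $\bx$, so $\Adj(\t,\tp)\times\{0\}\subseteq\Ex(\t,\tp)$.

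\emph{Part~(ii).} Each coordinate function $\bx\mapsto\coordx(\tp,k)$ is continuous on $\Comp(P)\subseteq\prod_{|\t|>1}\Ord(\Px[\t])$, and hence so is $\bx\mapsto\avg_\t(\bx\tbr[\tp])$; restricting to $\Adjp(\t,\tp)$ shows at once that the finitely many strict inequalities in~\eqref{eq:Coll_dfn} cut out an open subset $\Coll(\t,\tp)$ of $\Adjp(\t,\tp)$. The only step that is not purely formal is the inclusion $\Adj(\t,\tp)\subseteq\Coll(\t,\tp)$. Given $\bx\in\Adj(\t,\tp)$, set $\T:=\T(\bx)\in\adj(\t,\tp)$, so $\t,\tp\in\HAT$ and $\HAT[\tp]=\tubes(\bx\tbr[\tp])$ with $\t\in\HAT[\tp]$. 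Then $\bx\tbr[\tp]$ is constant on $\t$, so $\avg_\t(\bx\tbr[\tp])=\coordx(\tp,i)$ for every $i\in\t$. By \cref{prop:Ord_faces}, $\bx\tbr[\tp]$ lies in the relatively open face $\OFace(\Px[\tp],\HAT[\tp])$ of $\Ord(\Px[\tp])$, which by part~(iii) of \cref{cor:Ord_faces_properties} is the interior $\Oo(\Px[\tp]/\HAT[\tp])$ of the order polytope of the quotient poset; consequently the coordinates of $\bx\tbr[\tp]$ attached to two distinct tubes of $\HAT[\tp]$ joined by an edge of the directed graph~\eqref{eq:intro:acyclic} are \emph{strictly} separated. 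Now for $(i,j)\in\ijP$ the element $j$ lies in a tube $\t_b\in\HAT[\tp]$ with $\t_b\neq\t$, hence $\t\cap\t_b=\emptyset$, and $i\lP j$ produces an edge $\t\to\t_b$, so $\avg_\t(\bx\tbr[\tp])=\coordx(\tp,i)<\coordx(\tp,j)$. The inequalities for $(j,i)\in\jiP$ follow by the symmetric argument, so $\bx\in\Coll(\t,\tp)$.

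I expect the only real subtlety to lie in this last point: one must use that a point of $\Adj(\t,\tp)$ lands in the \emph{open} face $\OFace(\Px[\tp],\HAT[\tp])$ of $\Ord(\Px[\tp])$ — which is exactly what the equality $\HAT[\tp]=\tubes(\bx\tbr[\tp])$ from \cref{dfn:bx_to_tubing} encodes — so that the inequalities above are strict and not merely weak. If one prefers to bypass the quotient poset, strictness can be seen directly: if $\coordx(\tp,i)=\coordx(\tp,j)$ for some $(i,j)\in\ijP$, then the connected component containing $i$ of the level set $\{k\in\tp\mid\coordx(\tp,k)=\coordx(\tp,i)\}$ is a convex connected subset of $\tp$ on which $\bx\tbr[\tp]$ is constant and which properly contains $\t$ (it also contains $j\notin\t$), contradicting the maximality in the definition of $\tubes(\bx\tbr[\tp])$. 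Everything else is a routine unwinding of the definitions.
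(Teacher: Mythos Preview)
Your proof is correct and follows exactly the approach the paper intends: the paper states the lemma with a \qed\ and no separate proof, noting only that it is ``a straightforward consequence of the definitions and \cref{lemma:tmax_cont}.'' Your argument fills in precisely those details, using $1/\tmax_{\t,\tp}$ for part~(i) and the maximality in the definition of $\tubes(\bx\tbr[\tp])$ to obtain strict inequalities for part~(ii).
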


Finally, we introduce \emph{expanding and collapsing maps}. We first define the \emph{expanding map} 
\begin{equation*}%
  \ex_{\t,\tp}: \Ex(\t,\tp)\to \Coll(\t,\tp).
\end{equation*}
 Let $(\bx,t)\in\Ex(\t,\tp)$. If $t=0$, we set $\ex_{\t,\tp}(\bx,t):=\bx$. If $t>0$, the image $\ex_{\t,\tp}(\bx,t)=\by$ is described as follows. Let $\T:=\T(\bx)$, thus $\t,\tp\in\T$, and let $\T':=\T\setminus\{\t\}$. The point $\by$ will belong to $\D_{\T'}$, thus by \cref{prop:DT_cell}, it suffices to specify a point $\by[\t']\in\OFace(\Px[\t'],\HAT'[\t'])$ for each $\t'\in\PTp$. For $\t'\in\T'\setminus\{\tp\}$, set $\by\tbr[\t']:=\bx\tbr[\t']$. Let $\bz\in\COrd(\Px[\tp])\setminus\{\bnull\}$ be defined by
\begin{equation}\label{eq:ex_dfn_z}
  z_i:=
  \begin{cases}
    \coordx(\tp,i), &\text{if $i\in \tp\setminus\t$;}\\
    \coordx(\tp,i)+t\coordx(\t,i), &\text{if $i\in \t$.}\\
  \end{cases}
\end{equation}
Set $\by\tbr[\tp]:=\frac1{\al_{\Px[\tp]}(\bz)} \bz$. Thus indeed $\by[\t']\in\OFace(\Px[\t'],\HAT'[\t'])$ for each $\t'\in\PTp$, and by \cref{prop:DT_cell}, this data gives rise to a point $\by\in\D_{\T'}$. Since the conditions in the definition of $\Coll(\t,\tp)$ are satisfied for $\bz$ (where $\avg_\t(\bz)=\coordx(\tp,i)$ for any $i\in\t$), we find $\by\in\Coll(\t,\tp)$. We set $\ex_{\t,\tp}(\bx):=\by$.

Next, we describe the \emph{collapsing map} 
\begin{equation*}%
  \coll_{\t,\tp}: \Coll(\t,\tp)\to\Ex(\t,\tp).
\end{equation*}
 We will later see that it is the set-theoretic inverse of $\ex_{\t,\tp}$. Let $\by\in\Coll(\t,\tp)$ and let $\T':=\T(\by)$. If $\t\in\T'$, we set $\coll(\by):=(\by,0)$. Suppose now that $\t\notin\T'$ and set $\T:=\T'\sqcup\{\t\}$. Introduce a point $\bz\in\COrd(\Px[\tp])$ given by
\begin{equation}\label{eq:coll_y_to_z}
  z_i:=
  \begin{cases}
    \coordy(\tp,i), &\text{if $i\in\tp\setminus\t$;}\\
    \avg_\t(\by\tbr[\tp]), &\text{if $i\in\t$.}\\
  \end{cases}
\end{equation}
Set $\bx\tbr[\t']:=\by[\t']$ for all non-singleton $\t'\in\HAT\setminus\{\tp\}$ (including the case $\t'=\t$), thus $\bx\tbr[\t']\in\OFace(\Px[\t'],\HAT[\t'])$. Set $\bx\tbr[\tp]:=\frac1{\al_{\Px[\tp]}(\bz)} \bz$. 
 Applying \cref{prop:DT_cell}, we obtain a point $\bx\in\DT$. We let $t\in\R_{\geq0}$ be the unique number satisfying 
\begin{equation*}%
  \frac1{\al_{\Px[\tp]}(\bz)} \coordy(\tp,i)=\coordx(\tp,i)+t\coordx(\t,i) \quad\text{for all $i\in\t$}.
\end{equation*}
We see that $0<t<\tmax_{\t,\tp}(\bx)$ since the inequalities in the definition~\eqref{eq:tmax_dfn} of $\tmax_{\t,\tp}$ are satisfied for $\by\tbr[\tp]$. We set $\coll_{\t,\tp}(\by):=(\bx,t)\in\Ex(\t,\tp)$.

\begin{proposition}\label{prop:ex_cont}
The maps $\ex_{\t,\tp}$ and $\coll_{\t,\tp}$ are mutually inverse homeomorphisms between $\Ex(\t,\tp)$ and $\Coll(\t,\tp)$.
\end{proposition}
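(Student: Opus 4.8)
The plan is to prove two things: that $\ex_{\t,\tp}$ and $\coll_{\t,\tp}$ are mutually inverse as set maps, and that each is continuous; the proposition is immediate from these.

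For the first, I would compute directly from~\eqref{eq:ex_dfn_z} and~\eqref{eq:coll_y_to_z}. On the locus $t=0$ (equivalently, on $\Adj(\t,\tp)\subseteq\Coll(\t,\tp)$) both composites are the identity by construction, so assume $t>0$. Tracking a point $(\bx,t)$ through $\ex_{\t,\tp}$ and then $\coll_{\t,\tp}$, the calculation rests on three elementary observations: for $\bx\in\Adj(\t,\tp)$ all coordinates $\{x_i[\tp]:i\in\t\}$ coincide and equal $\avg_\t(\bx[\tp])$, because $\t\in\tubes(\bx[\tp])$; $\avg_\t(\bx[\t])=0$, because $\bx[\t]\in\Ord(\t)\subseteq\RSZ^\t$; and $\al_\tp(\bx[\tp])=1$, with $\al_\tp$ and $\avg_\t$ behaving linearly under scaling and translation. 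These give $\avg_\t(\bz)=\avg_\t(\bx[\tp])$ for the vector $\bz$ of~\eqref{eq:ex_dfn_z}, so the vector that $\coll_{\t,\tp}$ reconstructs from $\by:=\ex_{\t,\tp}(\bx,t)$ is a positive multiple of $\bx[\tp]$, which renormalizes back to $\bx[\tp]$, and the recovered parameter agrees since $\bx[\t]\neq\bnull$. The reverse composite is symmetric. (That $\ex_{\t,\tp}$ indeed lands in the stratum $\D_{\T\setminus\{\t\}}$ — i.e.\ that $0<t<\tmax_{\t,\tp}(\bx)$ prevents spurious collisions among the $\t$- and $(\tp\setminus\t)$-coordinates of $\bz$ — was verified already when the maps were defined.)

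For continuity, since $\Comp(P)$ carries the subspace topology from $\prod_{|\sigma|>1}\Ord(\sigma)$, it suffices to show that $(\bx,t)\mapsto\ex_{\t,\tp}(\bx,t)[\sigma]$ is continuous on $\Ex(\t,\tp)$ for every tube $\sigma$ with $|\sigma|>1$, and likewise for $\coll_{\t,\tp}$. If $\sigma\not\subseteq\tp$, then $\ex_{\t,\tp}(\bx,t)[\sigma]=\bx[\sigma]$ identically: deleting the child $\t$ leaves $\Pareq(\HAT(\bx),\sigma)$ unchanged and different from $\tp$, so by \cref{lemma:Coh_reconstruct} the $\sigma$-coordinate is unaffected, and the map is a coordinate projection. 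If $\sigma=\tp$, then $\ex_{\t,\tp}(\bx,t)[\tp]=\res_\tp\bigl(\bx[\tp]+t\,\hat{\bx}[\t]\bigr)$, where $\hat{\bx}[\t]$ denotes $\bx[\t]$ extended by zeros on $\tp\setminus\t$; this depends continuously on $(\bx,t)$ because $\res_\tp$ is continuous wherever $\al_\tp\neq0$ and $\al_\tp$ is positive throughout $\Ex(\t,\tp)$. The remaining case $\sigma\subsetneq\tp$ is the crux: now $\ex_{\t,\tp}(\bx,t)[\sigma]$ is reconstructed from the coordinate at the minimal tube $\tau$ of $\HAT(\bx)\setminus\{\t\}$ containing $\sigma$, and along a convergent sequence $(\bx_n,t_n)\to(\bx_\infty,t_\infty)$ with $\bx_n\in\DT$ (fixed after passing to a subsequence) and $\bx_\infty\in\D_{\T'}$, $\T'\supseteq\T$ by \cref{lemma:DT_closure}, both $\tau$ and the order of vanishing of $\al_\sigma$ at that coordinate can jump, so a plain composition of continuous maps is not available.

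To handle this I would reuse the approximation device of~\cite[Section~3.4]{Sinha} from the proof of \cref{prop:Comp=Coh}: realize each $\bx_n$ as a limit $\lim\ResP(\bm w_n)$ of honest configurations $\bm w_n\in\Oo(P)$ as the auxiliary parameters tend to $\bnull$, and realize $\ex_{\t,\tp}(\bx_n,t_n)$ as $\lim\ResP(\tilde{\bm w}_n)$, where $\tilde{\bm w}_n$ is obtained from $\bm w_n$ by additionally spreading the points of $\t$ inside the cluster $\tp$ apart by a factor proportional to $t_n$. For honest configurations every map $\res_\sigma$ is a composition of continuous maps with no degeneration, so $\ex_{\t,\tp}(\bx_n,t_n)[\sigma]=\lim\res_\sigma(\tilde{\bm w}_n)$, and a diagonal argument exchanging the $n\to\infty$ limit with the scale limit gives $\ex_{\t,\tp}(\bx_n,t_n)[\sigma]\to\ex_{\t,\tp}(\bx_\infty,t_\infty)[\sigma]$; the same scheme applies to $\coll_{\t,\tp}$. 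I expect this last case to be the real obstacle: it is exactly the step where the combinatorial type of the stratum changes and ratios of collapsing distances must be shown to vary continuously, so that the continuity of $\tmax_{\t,\tp}$ (\cref{lemma:tmax_cont}), the closure description (\cref{lemma:DT_closure}), and the blow-up-like nature of $\Comp(P)=\Coh(P)$ (cf.\ \cref{rmk:blow_up}) are all brought into play.
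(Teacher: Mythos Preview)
Your treatment of ``mutually inverse'' and of the coordinates $\sigma\not\subseteq\tp$ and $\sigma=\tp$ is correct and matches the paper's. The gap is in the remaining case $\sigma\subsetneq\tp$: the diagonal argument you sketch is not a proof. Realizing $\bx_n$ and $\ex_{\t,\tp}(\bx_n,t_n)$ as limits of honest configurations $\ResP(\bm w_n(\bs))$ and $\ResP(\tilde{\bm w}_n(\bs))$ gives, for each fixed $n$, convergence as $\bs\to\bnull$; but exchanging the $n\to\infty$ limit with the $\bs\to\bnull$ limit requires a uniformity statement that you have not established, and it does not follow from the ingredients you cite. In particular, one cannot conclude $\ex_{\t,\tp}(\bx_n,t_n)[\sigma]\to\ex_{\t,\tp}(\bx_\infty,t_\infty)[\sigma]$ without already knowing the map is continuous, so the argument is circular as stated.

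The paper avoids this entirely by refining your case split. After passing to a subsequence with $\bx^{(n)}\in\D_{\T'}$ for a fixed $\T'\subseteq\T(\bx_\infty)$ (both in $\adj(\t,\tp)$), set $\tp':=\Pareq(\HAT',\sigma)$; this is now \emph{constant} along the sequence, and one splits according to whether $\tp'\notin\{\t,\tp\}$, $\tp'=\t$, or $\tp'=\tp$ (the last case further split by whether $\sigma\cap\t=\emptyset$). In the first three subcases a short computation with \cref{lemma:Coh_reconstruct} gives the exact identity $\by^{(n)}[\sigma]=\bx^{(n)}[\sigma]$, so convergence is immediate. In the fourth subcase ($\tp'=\tp$ and $\sigma\cap\t\neq\emptyset$), adjacency of $\t,\tp$ in $\HAT(\bx_\infty)$ forces $\Pareq(\HAT(\bx_\infty),\sigma)=\tp$ as well; then both $\by^{(n)}[\sigma]$ and $\bx_\infty[\sigma]$ are obtained by applying $\res_\sigma$ to the $\tp$-coordinate, and continuity follows from continuity of $\res_\sigma$ at $\bx_\infty[\tp]$ together with your already-established $\by^{(n)}[\tp]\to\bx_\infty[\tp]$. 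The same scheme handles $\coll_{\t,\tp}$, with an additional one-line check that the recovered parameter $t^{(n)}\to0$. No approximation by honest configurations and no diagonal argument is needed.
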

\begin{proof}
The fact that these maps are set-theoretic inverses of each other follows by construction. It remains to check that both maps are continuous. Let $(\bx,t)\in\Ex(\t,\tp)$. If $t>0$ then $\ex_{\t,\tp}$ is obviously continuous at $(\bx,t)$, so suppose $t=0$. Choose a sequence
 $(\bx^\parr n, t^\parr n)\in \Ex(\t,\tp)$ satisfying $0\leq t^\parr n<\tmax_{\t,\tp}(\bx^\parr n)$ and converging to $(\bx,0)$ as $n\to\infty$. Let $\by^\parr n:=\ex_{\t,\tp}(\bx^\parr n,t^\parr n)$. Since $\ex_{\t,\tp}(\bx,0)=\bx$, we need to show that $\lim_{n\to\infty} \by^\parr n=\bx$. Without loss of generality, we may assume that $\bx^\parr n\in \D_{\T'}$ for some fixed $\T'$. 

Letting $\T:=\T(\bx)$, we see that $\T'\subseteq \T$ by \cref{lemma:DT_closure}. Since $\bx,\bx^\parr n\in\Adj(\t,\tp)$, we have $\T,\T'\in\adj(\t,\tp)$, thus $\t,\tp\in\HAT'\subseteq \HAT$. Let $\t'$ be any non-singleton \pipe, and let $\tp':=\Pareq(\HAT',\t')$. We consider four cases:
\begin{enumerate}[label=\normalfont(\arabic*)]
\item\label{tp1} $\tp'\neq\t,\tp$;
\item\label{tp2} $\tp'=\t$;
\item\label{tp3} $\tp'=\tp$ and $\t'\cap\t=\emptyset$;
\item\label{tp4} $\tp'=\tp$ and $\t'\cap\t\neq\emptyset$.
\end{enumerate}
We use \cref{lemma:Coh_reconstruct} to show that in cases~\ref{tp1}--\ref{tp3}, we have $\by^\parr n[\t']=\bx^\parr n[\t']$. First, in case~\ref{tp1},
\begin{equation*}%
  \bx\pan[\t']=\res_{\t'}(\bx\pan[\tp'])=\res_{\t'}(\by\pan[\tp'])=\by\pan[\t'].
\end{equation*}
In case~\ref{tp2}, by~\eqref{eq:ex_dfn_z}, we have $\bx\pan[\t]=\res_\t(\by\pan[\tp])$, and thus
\begin{equation*}%
  \bx\pan[\t']=\res_{\t'}(\bx\pan[\t])=\res_{\t'}(\res_\t(\by\pan[\tp]))=\res_{\t'}(\by\pan[\tp])=\by\pan[\t'].
\end{equation*}
In case~\ref{tp3}, since $\res_{\t'}(\bz)$ depends only on the coordinates $z_i$ for $i\in\t'$, we get
\begin{equation*}%
  \bx\pan[\t']=\res_{\t'}(\bx\pan[\tp])=\res_{\t'}(\by\pan[\tp])=\by\pan[\t'].
\end{equation*}
Therefore in cases~\ref{tp1}--\ref{tp3}, we find
\begin{equation*}%
  \lim_{n\to\infty} \by^\parr n[\t']=  \lim_{n\to\infty} \bx^\parr n[\t']= \bx\tbr[\t'], \quad\text{since} \lim_{n\to\infty} \bx^\parr n=\bx.
\end{equation*}
In case~\ref{tp4}, because $\t,\tp$ are adjacent in $\HAT$, we get $\Pareq(\HAT,\t')=\tp$. Thus $\by\pan[\t']=\res_{\t'}(\by\pan[\tp])$ and $\bx[\t']=\res_{\t'}(\bx[\tp])$. By construction~\eqref{eq:ex_dfn_z}, we have $\by\pan[\tp]\to\bx[\tp]$ as $n\to\infty$, which implies the result by the continuity of $\res_{\t'}$. We have shown that the map $\ex_{\t,\tp}$ is continuous.

We now check the continuity of $\coll_{\t,\tp}$. Let $\by\in\Coll(\t,\tp)$ with $\T:=\T(\by)$. If $\t\notin\T$ then clearly $\coll_{\t,\tp}$ is continuous at $\by$, so assume $\t\in\T$. Thus $\coll_{\t,\tp}(\by)=(\by,0)$. Choose a sequence $\by\pan$ in $\Coll(\t,\tp)$ converging to $\by$, and assume that $\T'':=\T(\by^\parr n)$ is fixed. By \cref{lemma:DT_closure}, it satisfies $\T''\subseteq\T$. It $\t\in\T''$ then $\coll_{\t,\tp}(\by\pan)=(\by\pan,0)$ converges to $\by$ as $n\to\infty$, so assume $\t\notin\T''$, and let $\T':=\T''\sqcup\{\t\}$. We again have $\T,\T'\in\adj(\t,\tp)$.

Let $(\bx^\parr n,t^\parr n):=\coll_{\t,\tp}(\by^\parr n)$, thus $t\pan>0$ and $\T(\bx\pan)=\T'$. Let $\t'$ be any non-singleton \pipe, and let $\tp':=\Pareq(\HAT',\t')$. Considering cases~\ref{tp1}--\ref{tp4} above, we check that  
\begin{equation}\label{eq:lim_bxpan_by}
  \lim_{n\to\infty}\bx\pan=\by.
\end{equation}

It remains to show that $t^\parr n\to 0$ as $n\to\infty$. Let $\bz$ and $\bz^\parr n$ be obtained respectively from $\by$ and $\by^\parr n$ via~\eqref{eq:coll_y_to_z}. Thus $\lim_{n\to\infty}\bz^\parr n= \bz$. Choose $i\lP j$ with $i,j\in\t$. For each $n$, $t\pan$ satisfies
\begin{equation*}%
    \frac1{\al_{\Px[\tp]}(\bz^\parr n)} (\coordypn(\tp,j)-\coordypn(\tp,i))=(\coordxpn(\tp,j)-\coordxpn(\tp,i))+t^\parr n(\coordxpn(\t,j)-\coordxpn(\t,i)).
\end{equation*}
By~\eqref{eq:lim_bxpan_by}, $\coordxpn(\t,j)-\coordxpn(\t,i)$ has a positive limit $\coordx(\t,j)-\coordx(\t,i)$, and since we have
\begin{equation*}%
  \frac1{\al_{\Px[\tp]}(\bz)} (\coordy(\tp,j)-\coordy(\tp,i))=(\coordx(\tp,j)-\coordx(\tp,i))+t(\coordx(\t,j)-\coordx(\t,i)) \quad\text{for $t=0$,}
\end{equation*}
we find $t^\parr n\to 0$ as $n\to\infty$.
\end{proof}

\subsection{$\Comp(P)$ is a topological manifold with boundary}\label{sec:Comp_top_manif}
Our next goal is to show that $\Comp(P)$ --- as well as each cell closure $\Closure(\D_\Tubing)$ --- is a topological manifold with boundary. 

Fix two proper \pipings $\T'\subseteq\T$. Let
\begin{equation}\label{eq:T_T'_by_inclusion}
  \Tubing\setminus\Tubing'=\{\tube^\parr1,\dots,\tube^\parr m\}
\end{equation}
be ordered by inclusion so that $\t^\parr i \subseteq\t^\parr j$ implies $i\leq j$. 
For $i\in[m]$, let $\tp\pai\supsetneq\t\pai$ be the parent of $\t\pai$ in $\HAT$ (cf. \cref{dfn:tree}). 

Let $(\bx_0,\bt)\in \DT\times[0,\infty)^m$. We give the following inductive definition. We say that $(\bx_0,\bt)$ is \emph{$0$-expandable} and set $\ex_{\T,\T'}^\parr0(\bx_0,\bt):=\bx_0$. For each $i=1,2,\dots,m$, assume that $(\bx_0,\bt)$ is $(i-1)$-expandable and set $\bx_{i-1}:=\ex_{\T,\T'}^\parr{i-1}(\bx_0,\bt)$. We say that $(\bx_0,\bt)$ is \emph{$i$-expandable} if $(\bx_{i-1},t_i)\in\Ex(\ti,\tpi)$. In this case, we define $\ex_{\T,\T'}\pai(\bx_0,\bt):=\ex_{\ti,\tpi}(\bx_{i-1},t_i)$. 
 Let
\begin{equation*}%
  \Ex(\T,\T'):=\{(\bx_0,\bt)\in \DT\times[0,\infty)^m\mid (\bx_0,\bt)\text{ is $m$-expandable}\}.
\end{equation*}
We thus get a map $\ex_{\T,\T'}:=\ex_{\T,\T'}^\parr m:\Ex(\T,\T')\to\Comp(P)$.

 Conversely, set
\begin{equation*}%
  \Star(\Tubing,\Tubing'):=\bigsqcup_{\Tubing'\subseteq \Tubing''\subseteq \Tubing} \D_{\Tubing''}.%
\end{equation*}
Clearly, the image of $\ex_{\T,\T'}$ is contained in $\Star(\T,\T')$. Let $\by_m\in \Star(\T,\T')$. We say that $\by_m$ is \emph{$m$-collapsible} and define $\coll^\parr m(\by_m):=\by_m$. For each $i=m,m-1,\dots,1$, assume that $\by_m$ is $i$-collapsible and that we have defined a point $\coll\pai(\by_m)=(\by_i,\bt\pai)$, where $\bt\pai=(t_m,t_{m-1},\dots,t_{i+1})\in[0,\infty)^{m-i}$. We say that $\by_m$ is \emph{$(i-1)$-collapsible} if $\by_i\in\Coll(\ti,\tpi)$. In this case, denoting $(\by_{i-1},t_i):=\coll_{\ti,\tpi}(\by_{i})$, we define $\coll^\parr{i-1}(\by_m):=(\by_{i-1},(t_m,t_{m-1},\dots,t_{i+1},t_{i}))$. 
 Let
\begin{equation*}%
  \Coll(\T,\T'):=\{\by_m\in \Star(\T,\T')\mid \by_m\text{ is $0$-collapsible}\}.
\end{equation*}
We thus have a map $\coll_{\T,\T'}:=\coll_{\T,\T'}^\parr0:\Coll(\T,\T')\to \DT\times[0,\infty)^m$.
By \cref{prop:ex_cont}, $\ex_{\T,\T'}$ and $\coll_{\T,\T'}$ form a pair of mutually inverse homeomorphisms between $\Ex(\T,\T')$ and $\Coll(\T,\T')$.

\begin{lemma}\ 
  \begin{theoremlist}
  \item\label{item:Ex_open} $\Ex(\T,\T')$ is an open subset of $\DT\times[0,\infty)^m$ containing $\DT\times\{\bnull\}$.
  \item\label{item:Coll_open} $\Coll(\T,\T')$ is an open subset of $\Closure(\D_{\T'})$ containing $\DT$.
  \end{theoremlist}
\end{lemma}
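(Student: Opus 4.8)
Write $\Ex^\parr i(\T,\T')\subseteq\DT\times[0,\infty)^m$ for the set of $i$-expandable pairs, so that $\Ex^\parr0(\T,\T')=\DT\times[0,\infty)^m$ and $\Ex^\parr m(\T,\T')=\Ex(\T,\T')$; similarly write $\Coll^\parr i(\T,\T')$ for the set of $i$-collapsible points, so that $\Coll^\parr m(\T,\T')=\Star(\T,\T')$ and $\Coll^\parr0(\T,\T')=\Coll(\T,\T')$. The plan is to prove both parts by induction on $i$, bootstrapping from the single-step statements \cref{lemma:Ex_Adj_open_single_ttp} and \cref{prop:ex_cont}. The crucial bookkeeping point is this: since $\T\setminus\T'$ is listed in inclusion order as in~\eqref{eq:T_T'_by_inclusion}, the relation $\ti\subsetneq\tpi=\t^\parr j$ would force $i\le j$ with equality impossible, so $\tpi$ is never among $\t^\parr1,\dots,\t^\parr i$. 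Hence $\tpi$ is not removed during the first $i-1$ expanding steps, and it is present after all collapsing steps past $i$ (being either $P$, a tube of $\T'$, or a tube $\t^\parr j$ with $j>i$ that step $j$ reinstates); in either direction $\tpi$ therefore remains the parent of $\ti$.

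For \itemref{item:Ex_open}: I would show by induction on $i$ that $\Ex^\parr i(\T,\T')$ is open in $\DT\times[0,\infty)^m$ and that $\ex_{\T,\T'}^\parr i$ is continuous on it; the case $i=0$ is the projection $(\bx_0,\bt)\mapsto\bx_0$. For the step, the bookkeeping above shows that for $(\bx_0,\bt)\in\Ex^\parr{i-1}(\T,\T')$ the point $\bx_{i-1}:=\ex_{\T,\T'}^\parr{i-1}(\bx_0,\bt)$ lies in $\Adj(\ti,\tpi)$, so $(\bx_0,\bt)\mapsto(\bx_{i-1},t_i)$ is a continuous map $\Ex^\parr{i-1}(\T,\T')\to\Adj(\ti,\tpi)\times[0,\infty)$. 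Then $\Ex^\parr i(\T,\T')$ is the preimage of the open set $\Ex(\ti,\tpi)$, hence open, and postcomposing with the continuous map $\ex_{\ti,\tpi}$ of \cref{prop:ex_cont} yields continuity of $\ex_{\T,\T'}^\parr i$. Finally $\DT\times\{\bnull\}\subseteq\Ex(\T,\T')$, since taking $\bt=\bnull$ leaves $\bx_{i-1}=\bx_0$ at every step and $(\bx_0,0)\in\Adj(\ti,\tpi)\times\{0\}\subseteq\Ex(\ti,\tpi)$.

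For \itemref{item:Coll_open}: I would first note, using \cref{lemma:DT_closure}, that $\Star(\T,\T')$ is open in $\Closure(\D_{\T'})$, because its complement $\bigsqcup_{\T''\supseteq\T',\ \T''\not\subseteq\T}\D_{\T''}$ is a union of cell closures (if $\T''\not\subseteq\T$ and $\T'''\supseteq\T''$ then $\T'''\not\subseteq\T$). Then I would run the downward induction on $i=m,m-1,\dots,0$, showing $\Coll^\parr i(\T,\T')$ is open in $\Closure(\D_{\T'})$ and $\coll_{\T,\T'}^\parr i$ continuous. In the step, for $\by_m\in\Coll^\parr i(\T,\T')$ with $\by_m\in\D_{\T''}$ and $\T'\subseteq\T''\subseteq\T$, every collapsing step past $i$ enlarges the tubing only by one of $\t^\parr{i+1},\dots,\t^\parr m$, so the tubing $\T(\by_i)$ of the intermediate point $\by_i$ satisfies $\T''\subseteq\T(\by_i)\subseteq\T$; combined with the first-paragraph observation this gives $\tpi\in\widehat{\T(\by_i)\sqcup\{\ti\}}$, hence $\by_i\in\Adjp(\ti,\tpi)$. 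Thus $\by_m\mapsto\by_i$ is a continuous map $\Coll^\parr i(\T,\T')\to\Adjp(\ti,\tpi)$, so $\Coll^\parr{i-1}(\T,\T')$ is the preimage of the open set $\Coll(\ti,\tpi)$ and hence open; postcomposing with $\coll_{\ti,\tpi}$ gives continuity of $\coll_{\T,\T'}^\parr{i-1}$. Lastly $\DT\subseteq\Coll(\T,\T')$, since when $\T(\by_m)=\T$ every collapsing step is trivial.

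The step I expect to be the main obstacle is exactly the tubing bookkeeping of the first paragraph: one must make sure the intermediate points $\bx_{i-1}$ and $\by_i$ land inside $\Adj(\ti,\tpi)$ and $\Adjp(\ti,\tpi)$ respectively --- these are unions of cells and are not open in $\Comp(P)$ --- so that the single-step openness of $\Ex(\ti,\tpi)$ and $\Coll(\ti,\tpi)$ can legitimately be pulled back. Granting that, everything else is a routine induction over \cref{prop:ex_cont}.
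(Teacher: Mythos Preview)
Your proposal is correct and follows essentially the same approach as the paper: both argue by induction on $i$ through the intermediate sets $\Ex^\parr i(\T,\T')$ and $\Coll^\parr i(\T,\T')$, pulling back the single-step openness from \cref{lemma:Ex_Adj_open_single_ttp} via the continuity of \cref{prop:ex_cont}. Your version is in fact more explicit than the paper's on two points the paper leaves terse --- the bookkeeping that $\bx_{i-1}\in\Adj(\ti,\tpi)$ and $\by_i\in\Adjp(\ti,\tpi)$ (the paper just says ``since the tubes in~\eqref{eq:T_T'_by_inclusion} are ordered by inclusion''), and the argument that $\Star(\T,\T')$ is open in $\Closure(\D_{\T'})$ --- but the underlying strategy is identical.
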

\begin{proof}
\itemref{item:Ex_open}: For $i=0,1,\dots,m$, let $\Ex\pai(\T,\T')\subset\DT\times[0,\infty)^m$ be the set of $i$-expandable points. Thus $\Ex^\parr0(\T,\T')=\DT\times[0,\infty)^m$ and $\Ex^\parr m(\T,\T')=\Ex(\T,\T')$. 
 We proceed by induction on $i=1,2,\dots,m$. Suppose that $\Ex^\parr{i-1}(\T,\T')$ is open inside $\DT\times[0,\infty)^m$ and contains $\DT\times\{\bnull\}$. We have 
\begin{equation*}%
  \Ex\pai(\T,\T')=\{(\bx_0,\bt)\in \Ex^\parr{i-1}(\T,\T')\mid (\ex_{\T,\T'}^\parr{i-1}(\bx_0,\bt),t_i)\in \Ex(\ti,\tpi)\}.
\end{equation*}
Observe that for any $(\bx_0,\bt)\in \Ex^\parr{i-1}(\T,\T')$, the point $\bx_{i-1}:=\ex_{\T,\T'}^\parr{i-1}(\bx_0,\bt)$ belongs to $\Adj(\ti,\tpi)$ since the \pipes in~\eqref{eq:T_T'_by_inclusion} are ordered by inclusion. In order for $(\bx_{i-1},t_i)$ to belong to $\Ex(\ti,\tpi)$, we must have $t^\parr i<\tmax_{\ti,\tpi}(\bx_{i-1})$. By \cref{lemma:Ex_Adj_open_single_ttp}, $\Ex(\ti,\tpi)$ is open in $\Adj(\ti,\tpi)$. Since the maps $\ex_{\T,\T'}^\parr{i-1}$ and $\tmax_{\ti,\tpi}$ are continuous, it follows that $\Ex\pai(\T,\T')$ is open in $\DT\times[0,\infty)^m$. By construction, it contains $\DT\times\{\bnull\}$. This finishes the induction step.

\itemref{item:Coll_open}: Similarly, for $i=m,m-1,\dots,0$, let $\Coll^\parr i(\T,\T')\subseteq \Star(\T,\T')$ consist of all $i$-collapsible points $\by_m$.
 Denote $(\by_i,\bt\pai):=\coll\pai(\by_m)$ as above. It follows that $\by_i\in\Adjp(\ti,\tpi)$ for each $i$. By \cref{lemma:Ex_Adj_open_single_ttp}, $\Coll(\ti,\tpi)$ is an open subset of $\Adjp(\ti,\tpi)$. Thus each $\Coll^\parr i(\T,\T')$ is an open subset of $\Star(\T,\T')$, which is an open subset of $\Closure(\D_{\T'})$. By construction, $\Coll^\parr i(\T,\T')$ contains $\DT$ for each $i=m,m-1,\dots,0$.
\end{proof}

\begin{corollary}
Each cell closure
\begin{equation}\label{eq:Closure_is_top_mnfld}
  \Closure(\D_{\T'})=\bigsqcup_{\Tubing\supseteq\Tubing'} \D_{\T}
\end{equation}
 is a topological manifold with boundary
\begin{equation}\label{eq:Closure_is_top_mnfld_bdry}
  \partial\D_{\T'}=\bigsqcup_{\Tubing\supsetneq\Tubing'} \D_{\T}.
\end{equation}
\end{corollary}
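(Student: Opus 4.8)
The plan is to cover $\Closure(\D_{\T'})$ by the charts furnished by the collapsing maps and to read off the boundary from them. To begin, I would note that $\Closure(\D_{\T'})$ is a closed subset of the compact metric space $\prod_{|\t|>1}\Ord(\Px[\t])$, hence Hausdorff and second countable; so it is enough to show that each point has an open neighborhood homeomorphic to an open subset of the half-space $\R^{d-1}\times[0,\infty)$, where $d:=|P|-|\T'|-2=\dim\D_{\T'}$ (\cref{cor:Comp_cell_decn}), and then to decide which points lie on the boundary.

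Given $\bx\in\Closure(\D_{\T'})$, by \cref{lemma:DT_closure} there is a unique proper tubing $\T\supseteq\T'$ with $\bx\in\D_\T$; put $m:=|\T\setminus\T'|$. Iterating \cref{prop:ex_cont} along the tubes of $\T\setminus\T'$ gives mutually inverse homeomorphisms $\ex_{\T,\T'}$ and $\coll_{\T,\T'}$ between $\Ex(\T,\T')$ and $\Coll(\T,\T')$. By \cref{item:Coll_open}, $\Coll(\T,\T')$ is an open neighborhood of $\D_\T$, hence of $\bx$, inside $\Closure(\D_{\T'})$; by \cref{item:Ex_open}, $\Ex(\T,\T')$ is an open subset of $\D_\T\times[0,\infty)^m$; and by \cref{cor:Comp_cell_decn}, $\D_\T\cong\R^{d-m}$. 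Thus $\coll_{\T,\T'}$ is a homeomorphism from an open neighborhood of $\bx$ onto an open subset of $\R^{d-m}\times[0,\infty)^m$, which is the candidate chart.

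What remains is to observe that $\R^{d-m}\times[0,\infty)^m$ is a topological manifold with boundary of dimension $d$. This rests on the standard fact that the orthant $[0,\infty)^m$ is a topological $m$-manifold with boundary, homeomorphic to $[0,\infty)\times\R^{m-1}$ for $m\geq1$ (corner smoothing; by induction one only needs $[0,\infty)^2\cong[0,\infty)\times\R$) and to a point for $m=0$, its manifold boundary being, for $m\geq1$, the locus where some coordinate vanishes, and empty for $m=0$. This is the only input not already established in the excerpt, and I expect it to be the main --- though entirely standard --- obstacle; the genuinely technical work, the continuity and openness of the expanding and collapsing maps, has already been carried out in \cref{prop:ex_cont} and the preceding lemma. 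Granting it, each $\Coll(\T,\T')$ is a $d$-manifold with boundary, and since being such is a local property and $\Closure(\D_{\T'})$ is Hausdorff and second countable, $\Closure(\D_{\T'})$ itself is a $d$-manifold with boundary.

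Finally I would identify the boundary. Since every tube of $\T\setminus\T'$ belongs to $\T(\bx)=\T$, each collapsing step applied to $\bx$ returns parameter $0$, so $\coll_{\T,\T'}(\bx)=(\bx,\bnull)\in\D_\T\times\{\bnull\}$. If $m=0$ then $\T=\T'$ and this point is interior in $\R^{d}$, so $\bx$ lies in the topological interior of $\Closure(\D_{\T'})$; if $m\geq1$ then $\bnull$ has a vanishing coordinate, so $(\bx,\bnull)$ lies on the boundary of $\R^{d-m}\times[0,\infty)^m$, whence $\bx\in\partial\D_{\T'}$. This yields $\partial\D_{\T'}=\bigsqcup_{\T\supsetneq\T'}\D_\T$, as in~\eqref{eq:Closure_is_top_mnfld_bdry}.
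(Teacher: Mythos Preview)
Your proposal is correct and follows essentially the same approach as the paper: both use the homeomorphism $\Ex(\T,\T')\cong\Coll(\T,\T')$ together with the openness lemmas to produce, at each point $\bx\in\D_\T\subseteq\Closure(\D_{\T'})$, a chart modeled on an open subset of $\R^{d-m}\times[0,\infty)^m$, and then invoke corner smoothing to conclude. The only cosmetic difference is that the paper shrinks to a box neighborhood $U\times[0,\eps)^m$ of $(\bx,\bnull)$ before applying corner smoothing, whereas you work with the full $\Ex(\T,\T')$; both are fine.
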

\noindent Note that $\Comp(P)=\Closure(\D_{\emptyset})$ appears in the above corollary as a special case.
\begin{proof}
Choose a point $\by\in \Closure(\D_{\T'})$ and let $\T=\T(\by)$. We have constructed a homeomorphism $\Ex(\T,\T')\xrasim\Coll(\T,\T')$. We have $\by\in\Coll(\T,\T')$ and $(\by,\bnull)\in\Ex(\T,\T')$. Since $\Ex(\T,\T')$ is open, we can choose an open neighborhood  $U\times[0,\eps)^m\subset \Ex(\T,\T')$ of $(\by,\bnull)$ such that $U$ is homeomorphic to an open ball. Then the image of $U\times[0,\eps)^m$ under $\ex_{\T,\T'}$ is an open neighborhood of $\by$ inside $\Coll(\T,\T')$, which is open inside $\Closure(\D_{\T'})$. 
Thus $\by$ admits an open neighborhood inside $\Closure(\D_{\T'})$, homeomorphic to $U\times[0,\eps)^m$, where $m=|\T\setminus\T'|$. If $m>0$ then $U\times[0,\eps)^m$ is homeomorphic to a half-space, and if $m=0$ then $U$ is homeomorphic to an open ball.
\end{proof}

\begin{proof}[Proof of Theorem~\ref{thm:intro:comp}]
  Since $\Comp(P)$ is a subset of $\prod_{|\t|>1} \Ord(\Px[\t])$, it is Hausdorff. We have constructed a stratification of $\Comp(P)$ into cells so that the closure of each cell is a topological manifold with boundary, and the boundary of each cell is the union of lower cells. Moreover, the poset of cell closures (i.e., the poset of proper \pipings ordered by reverse inclusion) is isomorphic to the face poset of the polytope $\Ass(P)$. It is then a standard application of the \emph{generalized \Poincare conjecture}~\cite{Sma,Fre,Per1,Per2,Per3} that $\Comp(P)$ is a \emph{regular CW complex}  homeomorphic to $\Ass(P)$. We outline a proof sketch and refer to~\cite[Section~3.2]{GKL3} for full details.

The proof proceeds by induction on cell dimension. Given a cell $\DT$, by the induction hypothesis, the closure of each cell in $\partial\DT$ is homeomorphic to a closed ball, with boundary homeomorphic to a sphere. This endows $\partial\DT$ with the structure of a regular CW complex. Its cell closure poset is isomorphic to the face poset of the boundary of the face of $\Ass(P)$ labeled by $\T$. Thus it follows from the results of~\cite{Bjo} that $\partial\DT$ is homeomorphic to a sphere. Since $\Closure(\DT)$ is a topological manifold with boundary, by an application of the generalized \Poincare conjecture (see~\cite[Theorem~10.3.3(ii)]{Davis} or~\cite[Theorem~3.10]{GKL3}), $\Closure(\DT)$ is homeomorphic to a closed ball. This constitutes the induction step. 
\end{proof}

\section{Affine \aposetdash cyclohedra}\label{sec:affine_assoc}
Let $\Pa$ be an affine poset of order  $|\Pa|=n\geq1$. We explain how our results on \posetdash associahedra extend to affine \aposetdash cyclohedra. For the most part, the proofs are completely analogous; we indicate the places where they differ from their \posetdash associahedra counterparts. Throughout this section, by \emph{\pipes} and \emph{\pipings} we mean $\Pa$-\pipes and $\Pa$-\pipings, respectively.

For our purposes, it will be more convenient to slightly change the definition~\eqref{eq:intro:Orda} of $\Orda(\Pa)$ and $\Oao(\Pa)$ and work with $\RSZ^{|\Pa|}$ rather than with $\R^{|\Pa|}/\R(1,1,\dots,1)$:
\begin{equation*}%
  \Oao(\Pa):=\{\bx\in\RSZ^{|\Pa|}\mid \xt_i<\xt_j\text{ for all $i\lpa j$}\},\quad \Orda(\Pa):=\{\bx\in\RSZ^{|\Pa|}\mid \xt_i\leq\xt_j\text{ for all $i\leqpa j$}\}.
\end{equation*}
Our first goal is to show that $\Orda(\Pa)$ is nonempty.
\begin{definition}
A \emph{linear extension} of $\Pa$ is a bijection $\phi:\Z\to\Z$ satisfying $\phi(i+n)=\phi(i)+n$ and $\phi(i)<\phi(j)$ for all $i\lPa j$.
\end{definition}
\noindent For instance, the vertex labels of the affine posets shown in Figures~\ref{fig:aff_cyc} and~\ref{fig:crit} are examples of linear extensions.
\begin{lemma}
Each affine poset $\Pa$ admits at least one linear extension.
\end{lemma}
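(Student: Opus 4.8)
The plan is to produce a \emph{$\sigma$-invariant total order} on $\Z$ extending $\leqPa$, where $\sigma\colon i\mapsto i+n$ denotes the shift, and then to observe that any such total order automatically has order type $\Z$. This last fact is exactly the statement that the order is induced by a bijection $\phi\colon\Z\to\Z$ with $\phi(i+n)=\phi(i)+n$, i.e.\ by a linear extension of $\Pa$.

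First I would apply Zorn's lemma to the collection of all partial orders $R$ on $\Z$ extending $\leqPa$ and satisfying $i\mathrel{R}j\iff(i+n)\mathrel{R}(j+n)$, ordered by inclusion; unions of chains are again such partial orders, so a maximal one $R^\ast$ exists. The crux is to show $R^\ast$ is total. Suppose $a,b$ are $R^\ast$-incomparable. Then $a\not\equiv b\pmod n$, since otherwise one of the chains $b\lPa b+n\lPa\cdots\lPa a$ or $a\lPa\cdots\lPa b$ would make $a,b$ comparable. Consider $D:=\{d\in\Z\mid b\mathrel{R^\ast}a+dn\}$. Because $a+dn\lPa a+(d+1)n$, the set $D$ is upward closed; it is nonempty by strong connectivity of $\Pa$; and $0\notin D$ by incomparability. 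Hence $D=\{d\ge d_0\}$ for some $d_0\ge1$, so no nonempty finite multiset of elements of $D$ has sum $0$. I claim this forces the transitive closure $\widetilde R$ of $R^\ast\cup\{(a+kn,b+kn)\mid k\in\Z\}$ to be antisymmetric: any directed cycle in $\widetilde R$ must alternate between $R^\ast$-paths and the new edges (a cycle of $R^\ast$-edges alone is impossible since $R^\ast$ is a partial order), and if the new edges used are $(a+k_1n,b+k_1n),\dots,(a+k_pn,b+k_pn)$ in cyclic order, then each $R^\ast$-segment gives $b+k_in\mathrel{R^\ast}a+k_{i+1}n$, i.e.\ $d_i:=k_{i+1}-k_i\in D$, with $\sum_i d_i=0$ — a contradiction. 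Thus $\widetilde R$ is a $\sigma$-invariant partial order properly containing $R^\ast$, contradicting maximality. So $R^\ast$ is a total order.

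Next I would check that $R^\ast$ has order type $\Z$. For each residue $r$ modulo $n$, the restriction of $R^\ast$ to the residue class of $r$ is a total order in which $x\mathrel{R^\ast}x+n$ for all $x$, hence is isomorphic to $(\Z,\le)$; strong connectivity then shows that for any $a\mathrel{R^\ast}b$ this class contributes only finitely many elements to the interval $[a,b]_{R^\ast}$, so every closed $R^\ast$-interval is finite. A countable linear order with no endpoints in which every closed interval is finite is isomorphic to $(\Z,\le)$; fix an isomorphism $\phi\colon(\Z,R^\ast)\to(\Z,\le)$. Finally, $\sigma$ is an automorphism of $(\Z,R^\ast)$, so $\phi\sigma\phi^{-1}$ is an automorphism of $(\Z,\le)$, i.e.\ translation by some $t\in\Z$, giving $\phi(i+n)=\phi(i)+t$; since $i\lPa i+n$ gives $\phi(i)<\phi(i+n)$ we get $t\ge1$, and since $\phi$ sends the $n$ residue classes to $n$ disjoint cosets of $t\Z$ covering $\Z$, necessarily $t=n$. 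Then $\phi$ is the desired linear extension.

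The step I expect to be the main obstacle is proving $R^\ast$ is total: the classical Szpilrajn trick of adjoining a single incomparable pair must be upgraded to adjoin an entire $\sigma$-orbit of pairs simultaneously, and the reason this never produces a cycle — that $D\subseteq\Z_{>0}$ — is precisely where the defining hypotheses $i\lPa i+n$ and strong connectivity of $\Pa$ are used.
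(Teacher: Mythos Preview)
Your argument is correct, and it takes a genuinely different route from the paper's.

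The paper constructs a linear extension directly and concretely: it exhibits an explicit set $S\subset\Z$ of $n$ representatives (one per residue class) by setting $S=\{i\in\Z\mid i-n\lPa 0\text{ and }i\not\lPa 0\}$, checks that $S$ is convex in $\Pa$, chooses an ordinary linear extension $\bar\phi\colon(S,\leqPa)\to[n]$ of this finite poset, and extends $\bar\phi$ $n$-periodically to $\phi\colon\Z\to\Z$. The verification that $\phi$ respects $\leqPa$ uses only the convexity of $S$ and elementary reasoning about residues. By contrast, you run an equivariant Szpilrajn argument: you use Zorn's lemma to produce a maximal $\sigma$-invariant partial order $R^\ast$ extending $\leqPa$, show it must be total by adjoining an entire $\sigma$-orbit of an incomparable pair (the key being that the set $D$ of ``shift discrepancies'' lies in $\Z_{\ge1}$, forcing any putative cycle to have positive total shift), and then argue that any such total order has order type $\Z$ with shift acting by $+n$. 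The paper's approach is shorter, fully constructive, and avoids choice; your approach is the natural $\sigma$-equivariant generalization of Szpilrajn's theorem and would adapt readily to other group actions, but at the cost of invoking Zorn and a slightly longer order-type analysis.
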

\begin{proof}
Let $S:=\{i\in\Z\mid i-n\lPa 0\text{ and }i\not\lPa 0\}$. Because $\Pa$ is strongly connected (cf. \cref{dfn:intro:aff_poset}), $S$ contains exactly one element in each residue class modulo $n$, thus $|S|=n$. Moreover, we claim that $S$ is convex. Indeed, suppose $i,j,k\in\Z$ are such that $i\lpa j\lpa k$ and $i,k\in S$. Then we have $j-n\lpa k-n\lpa 0\not\lpa i\lpa j$, so $j\in S$. Note, however, that $S$ need not be connected in general.

Consider $S$ as a finite subposet $(S,\leqpa)$ of $\Pa$. Choose a linear extension $\bar\phi:\Pax[S]\to[n]$ of $\Pax[S]$, and let $\phi:\Z\to\Z$ be its unique $n$-periodic extension (satisfying $\phi(i)=\bar \phi(i)$ for $i\in S$ and $\phi(i+n)=\phi(i)+n$ for $i\in\Z$). We claim that $\phi$ is a linear extension of $\Pa$. First, it is clearly a bijection $\Z\to\Z$. Second, suppose that for some $i\lPa j$, we have $\phi(i)>\phi(j)$. Adding a multiple of $n$ to both indices, we may assume that $j\in S$. Let $i'\in S$ be such that $i'\equiv i\pmod n$. Since $\bar\phi$ is a linear extension, we cannot have $i=i'$. If $i<i'$ then because $\phi(i'),\phi(j)\in[n]$, we have $\phi(i)\leq \phi(i')-n\leq 0<\phi(j)$, a contradiction. Assume now that $i'<i$. Then $i'\lpa i\lpa j$, so since $S$ is convex, we get $i\in S$, a contradiction. 
\end{proof}

\begin{corollary}\label{cor:Orda_nonempty}
$\Orda(\Pa)$ is a nonempty polytope of dimension $n-1$.
\end{corollary}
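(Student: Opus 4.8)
The plan is to handle the three assertions — nonemptiness, boundedness, and full-dimensionality of $\Orda(\Pa)$ — in turn, leaning on the linear extension produced by the previous lemma. First I would write down an explicit point of $\Oao(\Pa)\subseteq\Orda(\Pa)$: given a linear extension $\phi$ of $\Pa$, set $\xt_i:=\frac{c}{n}\phi(i)+\beta$ for all $i\in\Z$, where $\beta\in\R$ is chosen so that $\sum_{i=1}^{n}\xt_i=0$. The identity $\phi(i+n)=\phi(i)+n$ forces $\xt_{i+n}=\xt_i+c$, so this sequence represents a genuine point $\bx\in\RSZ^{|\Pa|}$, and since $\phi$ is order preserving and $c/n>0$, we get $\xt_i<\xt_j$ whenever $i\lpa j$. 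Hence $\Oao(\Pa)\neq\emptyset$, and a fortiori $\Orda(\Pa)\neq\emptyset$.

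Next I would observe that $\Orda(\Pa)$ is a polyhedron: the constraints $\xt_i\leq\xt_j$ for $i\leqpa j$ are $n$-periodic, and under the identification $\xt_{i+n}=\xt_i+c$ the pair $(i+n,j+n)$ imposes exactly the same linear constraint on $\bx\in\RSZ^{|\Pa|}$ as $(i,j)$; moreover for fixed $i,j'\in[n]$ only the relation with the smallest $k\geq0$ such that $i\lpa j'+kn$ is binding. So only finitely many constraints survive, and $\Orda(\Pa)$ is cut out of $\RSZ^{|\Pa|}$ by finitely many half-spaces. The step I expect to be the main obstacle is boundedness. Here I would use strong connectivity: for each $i\in[n]$ choose integers $k_i^{-},k_i^{+}\geq0$ with $1\leqpa i+k_i^{-}n$ and $i\leqpa 1+k_i^{+}n$, so every $\bx\in\Orda(\Pa)$ satisfies $\xt_1-k_i^{-}c\leq\xt_i\leq\xt_1+k_i^{+}c$. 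Setting $K:=\max_{i\in[n]}\max(k_i^{-},k_i^{+})$ and summing these inequalities over $i\in[n]$, the relation $\sum_{i=1}^{n}\xt_i=0$ forces $|\xt_1|\leq Kc$, and therefore $|\xt_i|\leq 2Kc$ for all $i\in[n]$. Since $\bx\in\RSZ^{|\Pa|}$ is determined by $\xt_1,\dots,\xt_n$, this bounds $\Orda(\Pa)$, so it is a polytope.

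Finally, for the dimension: $\Oao(\Pa)$ is the subset of $\RSZ^{|\Pa|}$ defined by the strict versions of the same finitely many essential inequalities, hence it is open in $\RSZ^{|\Pa|}$; being nonempty by the first step, it has dimension $n-1$. Since $\Oao(\Pa)\subseteq\Orda(\Pa)\subseteq\RSZ^{|\Pa|}$, the polytope $\Orda(\Pa)$ also has dimension $n-1$, which finishes the argument.
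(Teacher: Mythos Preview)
Your argument is correct and matches the paper's approach: construct an interior point of $\Oao(\Pa)$ from a linear extension via $\xt_i=\frac{c}{n}\phi(i)$, shifted to have zero coordinate sum, and conclude full-dimensionality from nonemptiness of the interior in $\RSZ^{|\Pa|}$. The paper's proof stops there, taking boundedness for granted; you additionally supply an explicit bound from strong connectivity, which is a welcome completion rather than a different route. One minor imprecision: when reducing to finitely many constraints, the minimal $k$ with $i\lpa j'+kn$ need not satisfy $k\geq 0$, but such a minimal $k$ still exists (strong connectivity gives a lower bound), so the finiteness conclusion is unaffected.
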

\begin{proof}
Let $\phi$ be a linear extension of $\Pa$. Setting $x_i:=\phi(i)\cdot \frac cn$ for $i\in[n]$, we obtain a point $\bx\in\R^{|\Pa|}$ such that $\psz^{[n]}(\bx)\in\Oao(\Pa)$. Thus the interior of $\Orda(\Pa)$ in $\RSZ^{|\Pa|}$ is nonempty.
\end{proof}

\begin{figure}
\includegraphics{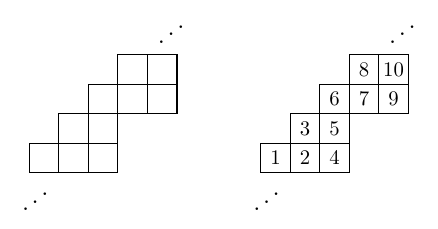}
  \caption{\label{fig:cyl} A cylindric skew shape $\Pa$ (left) and a linear extension of $\Pa$ (right).}
\end{figure}

\begin{remark}
We mention several relations between affine posets and existing objects in the literature. First, $\Orda(\Pa)$ is an \emph{alcoved polytope} in the sense of~\cite{LaPo_alcoved}. Its volume is the number of different linear extensions of $\Pa$, where two linear extensions are considered the same if their values differ by a constant. It is an interesting problem to compute the number of such linear extensions for various classes of posets, such as the ones arising from critical varieties as discussed in \cref{rmk:crit}. Second, it would be interesting to develop an analogous theory of (combinatorial, piecewise-linear, or birational) \emph{rowmotion} on affine posets; see e.g.~\cite{EiPr1,EiPr2,StWi}. Third, a natural class of affine posets consists of \emph{cylindric skew shapes}, i.e., regions of $\Z^2$ between two up-right lattice paths which are invariant under shifting by some nonzero vector $(a,b)\in\Ztnn^2$. An example for $(a,b)=(2,2)$ is shown in \cref{fig:cyl}. Linear extensions of such affine posets are certain kinds of ``cylindric standard Young tableaux.'' These objects are different from the well-studied cylindric tableaux arising in quantum Schubert calculus; cf.~\cite{Pos_affine}. Indeed, the labels of the former increase in the northeast direction while the labels of the latter increase in the southeast direction.
\end{remark}

\begin{remark}\label{rmk:Pasha}
Recall from \cref{rmk:crit} that one can associate an affine poset to each permutation $f\in S_n$. A different construction associating an affine poset to an affine permutation was given in~\cite[Section~3.1]{CPY}. The authors of~\cite{CPY} consider the notion of a \emph{proper numbering} of an affine poset $\Pa$, which is a map $d:\Pa\to\Z$ such that we have $d(i)<d(j)$ for all $i\lpa j$, and such that for each $j\in\Pa$, there exists $i\lpa j$ satisfying $d(i)=d(j)-1$. Thus the notion of a proper numbering of $\Pa$ is similar but different from our notion of a linear extension of $\Pa$. It would be interesting to see which of the remarkable properties of proper numberings developed in~\cite[Section~11]{CPY} generalize to arbitrary affine posets.
\end{remark}

\begin{definition}
A \emph{\piping partition} of $\Pa$ is a \piping which is simultaneously a set partition of $\Z$.
\end{definition}
This includes the case $\T=\{\Pa\}$ which will correspond to the empty face of $\Orda(\Pa)$. Recall the notion of equivalence of \pipes from \cref{sec:intro:affine}. For a \piping $\T$ not containing $\Pa$, we let $\EQ[\T]:=\{\eq[\t]\mid \t\in\T\}$ denote the corresponding (finite) set of equivalence classes. Thus we have $\T=\bigsqcup_{\eq[\t]\in\EQ[\T]} \eq[\t]$. For the \piping partition $\T=\{\Pa\}$, we set $\EQ[\T]:=\emptyset$.

Similarly to \cref{prop:Ord_faces}, we have the following description of the faces of $\Orda(\Pa)$.
\begin{proposition}
We have a bijection $\T\mapsto \OaFacecl(\Pa,\T)$ between \piping partitions of $\Pa$ and faces of $\Orda(\Pa)$. The face closure relations are given by refinement~\eqref{eq:Ord_faces}. The dimension of each face $\OaFacecl(\Pa,\T)$ equals $|\EQ[\T]|-1$. \qed
\end{proposition}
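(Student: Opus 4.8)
The plan is to transcribe the proof of \cref{prop:Ord_faces}---that is, Stanley's description~\cite{Stanley_two} of the faces of an order polytope---into the $n$-periodic setting, with $\Orda(\Pa)$ in place of $\Ord(P)$ and with all subsets, partitions, and tubes required to be $n$-periodic. For $\bx\in\Orda(\Pa)$, let $\tubes(\bx)$ be the collection of maximal (by inclusion) tubes $\t$ on which $\xt$ is constant. The first task is to check that $\tubes(\bx)$ is a tubing partition of $\Pa$ and that $\OaFace(\Pa,\T)=\{\bx\in\Orda(\Pa)\mid\tubes(\bx)=\T\}$ is the relative interior of $\OaFacecl(\Pa,\T)$; the proposition then follows once $F\mapsto\T_F$ (described below) and $\T\mapsto\OaFacecl(\Pa,\T)$ are shown to be mutually inverse order-respecting bijections.

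\emph{From faces to tubing partitions.} Given a nonempty face $F$ of $\Orda(\Pa)$, let $\Sbf_F$ be the ($n$-periodic) set of covering relations $i\precdot_\Pa j$ whose inequality $\xt_i\le\xt_j$ is tight on all of $F$, and let $\T_F$ be the partition of $\Z$ into the classes of the equivalence relation generated by $\Sbf_F$. I would check that each class is a tube: connectedness and $n$-periodicity are immediate, and convexity follows since $i\lpa j\lpa k$ with $i,k$ in one class forces $\xt_i=\xt_j=\xt_k$ on $F$, so every covering relation along a maximal chain from $i$ to $k$ through $j$ is tight on $F$ and hence lies in $\Sbf_F$. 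The key new point is that no class other than $\Pa$ can contain two elements of a single residue class mod $n$: since $i\lpa i+n$ while $\xt_{i+n}=\xt_i+c$ with $c>0$, putting $i$ and $i+n$ in one class would force $F=\emptyset$. Acyclicity of $D_{\T_F}$ is verified on the relative interior of $F$, where an edge of $D_{\T_F}$ records a \emph{strict} inequality between the two constant values that $\xt$ takes on the tubes it joins, so no directed cycle can occur. Hence $\T_F$ is a tubing partition and $F=\OaFacecl(\Pa,\T_F)$; the outcome $\T_F=\{\Pa\}$ occurs only when $F=\emptyset$, matching the convention $\OaFacecl(\Pa,\{\Pa\}):=\emptyset$.

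\emph{From tubing partitions to faces, and the refinement order.} Conversely $\OaFacecl(\Pa,\T)=\Orda(\Pa)\cap V_\T$, where $V_\T\subseteq\RSZ^{|\Pa|}$ is the subspace on which $\xt$ is constant on every tube of $\T$; this is a face of $\Orda(\Pa)$. Injectivity goes exactly as in the finite case: for $\bx$ in the relative interior of $\OaFacecl(\Pa,\T)$ one has $\tubes(\bx)=\T$, since if some tube of $\T$ enlarged to a strictly larger element of $\tubes(\bx)$, then a covering inequality internal to that larger tube would be tight throughout $\OaFacecl(\Pa,\T)$, contradicting genericity of $\bx$. The equivalence~\eqref{eq:Ord_faces} is then read off from the description of $V_\T$: the condition ``$\xt$ constant on $\T'$-tubes'' is implied by ``$\xt$ constant on $\T$-tubes'' precisely when each tube of $\T$ is contained in a tube of $\T'$, i.e., $\T$ refines $\T'$.

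\emph{Dimension; the main obstacle.} For $\T\ne\{\Pa\}$ every tube of $\T$ is finite and the shift $i\mapsto i+n$ acts freely on the set of $\T$-tubes, so collapsing each tube to a point produces an affine poset $\Pa/\T$ of order $|\EQ[\T]|$. Verifying the three axioms of \cref{dfn:intro:aff_poset} for $\Pa/\T$---in particular that $\overline i\lpa\overline{i+n}$ (because $i$ and $i+n$ lie in distinct tubes) and that strong connectivity descends---is the step demanding the most care; I expect it to be the main obstacle, the rest being a faithful copy of the finite argument. Granting it, $\OaFacecl(\Pa,\T)$ is affinely isomorphic to $\Orda(\Pa/\T)$, which by \cref{cor:Orda_nonempty} is a nonempty polytope of dimension $|\EQ[\T]|-1$. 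One can also argue directly: parametrize $V_\T$ by one real value per equivalence class of $\T$-tubes, observe that the linear constraint $\sum_i x_i=0$ defining $\RSZ^{|\Pa|}$ imposes a single nontrivial affine equation on these values, conclude $\dim V_\T=|\EQ[\T]|-1$, and use a linear-extension argument in the style of \cref{cor:Orda_nonempty} to see that $\OaFacecl(\Pa,\T)$ has nonempty interior in $V_\T$. Finally $\T=\{\Pa\}$ gives $|\EQ[\T]|-1=-1$, matching the empty face. The only genuinely new feature throughout is the periodicity relation $\xt_{i+n}=\xt_i+c$, which is exactly what forbids any tube other than $\Pa$ from meeting a residue class twice.
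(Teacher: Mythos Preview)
Your proposal is correct and matches the paper's intent: the paper gives no proof at all (the \qed immediately follows the statement), treating the result as a routine $n$-periodic transcription of \cref{prop:Ord_faces}, which is exactly what you supply. Your identification of the only genuinely new points---that $\xt_{i+n}=\xt_i+c$ with $c>0$ forbids a tube other than $\Pa$ from meeting a residue class twice, and that one must verify $\Pa/\T$ is again an affine poset before invoking \cref{cor:Orda_nonempty}---is accurate, and both checks go through as you describe.
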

\noindent As in the case of order polytopes, for a point $\bx\in\OaFace(\Pa,\T)$, we write $\tubes(\bx):=\T$, where $\OaFace(\Pa,\T)$ denotes the relative interior of the face $\OaFacecl(\Pa,\T)$.

We say that a \emph{maximal proper \pipe} is a \pipe $\t\neq\Pa$ satisfying $|\t|=n$. 
\begin{corollary}\label{cor:Orda_properties}\ 
\begin{enumerate}[label=\normalfont(\roman*)]
\item The vertices of $\Orda(\Pa)$ are in bijection with equivalence classes of maximal proper \pipes.
\item The facets of $\Orda(\Pa)$ are in bijection with covering relations $i\precdot_{\Pa} j$ in $\Pa$ such that $i\not\equiv j$ modulo $n$.
\item Each face $\OFacecl(\Pa,\T)$ of $\Orda(\Pa)$ is itself an affine order polytope $\Orda(\Pa/\T)$, where the quotient affine poset $\Pa/\T$ is obtained from $\Pa$ by identifying all elements that belong to a single \pipe of $\T$.
\end{enumerate}
\end{corollary}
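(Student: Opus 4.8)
The plan is to argue exactly as for \cref{cor:Ord_faces_properties}, using the Proposition preceding this corollary (the affine analog of \cref{prop:Ord_faces}) in place of \cref{prop:Ord_faces}, together with \cref{cor:Orda_nonempty} for the identity $\dim\Orda(\Pa)=n-1$. That Proposition gives a bijection $\T\mapsto\OaFacecl(\Pa,\T)$ between tubing partitions of $\Pa$ and faces of $\Orda(\Pa)$ with $\dim\OaFacecl(\Pa,\T)=|\EQ[\T]|-1$, so the vertices correspond to tubing partitions $\T$ with $|\EQ[\T]|=1$ and the facets to those with $|\EQ[\T]|=n-1$. Thus parts (i) and (ii) reduce to classifying these two families, and part (iii) to identifying $\OaFacecl(\Pa,\T)$ with an affine order polytope.

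For (i), I would argue that a tubing partition with a single equivalence class is $\eq[\t]$ for one tube $\t$, and since the translates $\t+dn$ partition $\Z$ they must cover every residue class modulo $n$; as $\t\neq\Pa$ (that case is the empty face), $\t$ meets each residue class exactly once, so $|\t|=n$ and $\t$ is a maximal proper tube. For the converse, given a maximal proper tube $\t$, the translates clearly partition $\Z$, and $D_{\eq[\t]}$ is acyclic because every edge $\t+dn\to\t+d'n$ must have $d<d'$: disjointness excludes $d=d'$, while $d>d'$ would give $a\lpa b-(d-d')n$ for some $a,b\in\t$, whence (using $b-(d-d')n\lpa b$ and convexity of $\t$) the element $b-(d-d')n$ would lie in $\t$, contradicting that $\t$ meets the residue class of $b$ only once. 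For (ii), since $\T$ partitions $\Z$ $n$-periodically, the representative sizes of the classes in $\EQ[\T]$ add up to $n$ (distinct classes cover disjoint sets of residue classes and together cover all $n$), so $|\EQ[\T]|=n-1$ forces one class $\eq[\{i,j\}]$ with $|\{i,j\}|=2$ and all others singletons; convexity and connectedness give $i\precdot_\Pa j$, and $|\{i,j\}|=2$ forces $i\not\equiv j\pmod n$. For the converse, given such a covering relation I would check that $\eq[\{i,j\}]$ together with all singletons in the remaining residue classes is $n$-periodic and pairwise nested-or-disjoint, and is a tubing by the finite-case argument: a directed cycle in $D_\T$ through a merged pair $\{i+dn,j+dn\}$ produces a chain of length $\geq2$ in $\Pa$ between $i+dn$ and $j+dn$, contradicting the covering relation; the new $n$-periodic phenomenon — a cycle visiting several distinct merged pairs — is ruled out by noting that $i\lpa j-mn$ is impossible for $m\geq1$ (again by convexity of $\{i,j\}$ together with $i\not\equiv j$), so no relation of $\Pa$ runs backward between distinct translates.

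For (iii), given a tubing partition $\T$ with $\Pa\notin\T$, I would first verify that the quotient $\Pa/\T$ — underlying set $\T$, ordered by the transitive closure of the relations $\t\to\t'$ whenever $p\lpa q$ for some $p\in\t$, $q\in\t'$ — is an affine poset with $|\Pa/\T|=|\EQ[\T]|$: it is a poset precisely because $D_\T$ is acyclic (the content of $\T$ being a tubing), the shift $\t\mapsto\t+n$ of $\Pa$ descends to the shift on $\Pa/\T$, one has $\t\prec_{\Pa/\T}\t+n$ since $p\lpa p+n$ for any $p\in\t$, and strong connectivity is inherited from $\Pa$. Then the map sending $\bx\in\OaFacecl(\Pa,\T)$ (which satisfies $\xt_i=\xt_j$ whenever $i,j$ lie in a common tube of $\T$) to the sequence indexed by $\T$ with value $\xt_i$ ($i\in\t$) on $\t$, followed by reprojection onto $\RSZ^{|\EQ[\T]|}$, is an affine isomorphism $\OaFacecl(\Pa,\T)\xrasim\Orda(\Pa/\T)$, with inverse given by \emph{expanding} a sequence on $\Pa/\T$ back into a $c$-periodic sequence on $\Pa$; compatibility with the remaining face structure follows from applying the Proposition to both $\Pa$ and $\Pa/\T$, since tubing partitions of $\Pa/\T$ correspond to tubing partitions of $\Pa$ refining $\T$. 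The hard part, relative to the non-affine case, is precisely the acyclicity bookkeeping in (i) and (ii): one must rule out directed cycles in the infinite graphs $D_{\eq[\t]}$ and $D_\T$ that wind around several lattice translates, and this is exactly where the affine axioms $i\lpa i+n$ and strong connectivity enter; everything else is a routine adaptation of the arguments in \cref{sec:order_polyt}.
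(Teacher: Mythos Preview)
Your proposal is correct and follows exactly the approach the paper takes: the paper states this corollary without proof, treating it as an immediate consequence of the preceding Proposition (the affine analog of \cref{prop:Ord_faces}), just as \cref{cor:Ord_faces_properties} follows from \cref{prop:Ord_faces}. You have correctly filled in the details the paper omits, including the nontrivial acyclicity checks peculiar to the affine case—in particular, the observation that edges between distinct translates of a maximal proper tube (for part~(i)) or between distinct merged pairs (for part~(ii)) can only run in the direction of increasing shift, which is exactly what rules out the ``winding'' cycles in $D_\T$.
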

\noindent A non-trivial consequence of \cref{cor:Orda_nonempty,cor:Orda_properties} is that the set of maximal proper \pipes is nonempty for any affine poset $\Pa$.

\begin{proof}[Proof sketch of \cref{thm:aff_poset_cyc}]
Our argument closely follows the proof of \cref{thm:poset_ass} in \cref{sec:proof:poset_ass}. We work with $n$-periodic sets $\Mcal$ of melted \pipes, still assuming that $\t\subseteq\t'$ with $\t\in\Mcal$ implies $\t'\in\Mcal$. An \emph{$\Mcal$-admissible \piping} is a \piping $\T$ containing $\Pa$ and satisfying conditions~\ref{M_adm_fro}--\ref{M_adm_melt} in \cref{sec:proof:poset_ass}. The poset $(\Adm(P;\Melt),\leq_\Melt)$ is defined in exactly the same way, using conditions~\eqref{tube_rel:frozen}--\eqref{tube_rel:melted} in \cref{sec:proof:poset_ass}. The dual affine \aposetdash cyclohedron $\Cyc(\Pa)^\ast$ is then obtained from the dual affine order polytope $\Orda(\Pa)^\ast$ via a sequence of stellar subdivisions at the faces of $\Orda(\Pa)$ corresponding to \piping partitions of the form
\begin{equation*}%
  \eq[\t]\sqcup\{\{i\}\mid i\in\Z:\text{ $i$ is not contained inside any element of $\eq[\t]$}\},
\end{equation*}
where at each step, we let $\t$ be a maximal by inclusion proper \pipe not contained in $\Melt$. 
\end{proof}

\begin{remark}
Suppose $P$ is a bounded (finite) poset with vertex set $\{0,1,\dots,n\}$ such that $0$ is the minimal element and $n$ is the maximal element. Then $P$ naturally gives rise to an affine poset $\Pa$ of order $n$ obtained by ``identifying $0$ and $n$.'' More precisely, $\leqpa$ is obtained by taking the transitive closure of relations $i+dn\leqpa j+dn$ for all $d\in\Z$ and $i\leqp j$. A very similar operation was recently considered in~\cite[Remark~2.7]{R_systems}.

It is easy to see that $\Orda(\Pa)$ is linearly equivalent to Stanley's order polytope $\hat\Ord(P)$, thus the polytopes $\Ord(P)$ and $\Ord(\Pa)$ are projectively equivalent by \cref{rmk:Stanley}. Each \Pdashpipe is also a $\Pa$-\pipe. However, not all (equivalence classes of) $\Pa$-\pipes are obtained in this way, since we have $\Pa$-\pipes of the form $\t\cup (\t'+n)$ where $\t,\t'$ are disjoint proper \Pdashpipes such that $n\in\t$ and $0\in\t'$. Thus the polytopes $\Ass(P)$ and $\Cyc(\Pa)$ are not directly related to each other. For instance, if $P$ is a chain on $4$ elements then $\Ass(P)$ is a pentagon and $\Cyc(\Pa)$ is a hexagon; compare \figref{fig:ass_perm}(left) and  \figref{fig:aff_cyc}(left).
\end{remark}

Next, we state an affine analog of \cref{cor:poset_ass_properties}, where we again identify two polytopes if they are combinatorially equivalent. We say that $\Pa$ is a \emph{circular chain} if $\leqpa$ coincides with the standard order $\leq$ on $\Z$. We say that $\Pa$ is a \emph{circular claw} if $\leqpa$ is the $n$-periodic transitive closure of relations $0\lpa 1,2,\dots,n-1\lpa n$. See \cref{fig:aff_cyc}.
\begin{corollary}\label{cor:aff_poset_cyc_properties}
Let $\Pa$ be an affine poset.
\begin{enumerate}[label=\normalfont(\roman*)]
\item\label{aPApr:simple} $\Cyc(\Pa)$ is a simple polytope of dimension $|\Pa|-1$.
\item\label{aPApr:flag} Its polar dual $\Cyc(\Pa)^\ast$ is simplicial, but in general not flag.
\item\label{aPApr:faces_dim} For each proper \piping $\T$, the corresponding face of $\Cyc(\Pa)$ has dimension $|\Pa|-|\EQ[\T]|-1$.
\item\label{aPApr:vert} The vertices of $\Cyc(\Pa)$ are in bijection with proper \pipings $\T$ satisfying $|\EQ[\T]|=|\Pa|-1$.
\item\label{aPApr:facets} The facets of $\Cyc(\Pa)$ are in bijection with equivalence classes of proper \pipes.
\item\label{aPApr:faces} Each face of $\Cyc(\Pa)$ is a product of \posetdash associahedra and affine \aposetdash cyclohedra.
\item\label{aPApr:chain} When $\Pa$ is a circular chain, $\Cyc(\Pa)$ is the $(|\Pa|-1)$-dimensional cyclohedron.
\item\label{aPApr:claw} When $\Pa$ is a circular claw, $\Cyc(\Pa)$ is the $(|\Pa|-1)$-dimensional type $B$ permutohedron.
\end{enumerate}
\end{corollary}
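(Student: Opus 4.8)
The plan is to follow the proof of \cref{cor:poset_ass_properties} essentially verbatim, separating the eight assertions into those that are formal consequences of \cref{thm:aff_poset_cyc} together with polar duality (namely \ref{aPApr:simple}--\ref{aPApr:facets}) and those (\ref{aPApr:faces}--\ref{aPApr:claw}) that require a combinatorial identification. Throughout, one uses that by \cref{thm:aff_poset_cyc} the polytope $\Cyc(\Pa)^\ast$ is simplicial of dimension $|\Pa|-1$ with boundary complex $\KCyc(\Pa)$, so $\Cyc(\Pa)$ is simple of dimension $|\Pa|-1$; this already gives \ref{aPApr:simple} and the simpliciality in \ref{aPApr:flag}.

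For the formal part: a proper tubing $\T$ is a simplex of $\KCyc(\Pa)$ on the vertex set $\EQ[\T]$, hence of dimension $|\EQ[\T]|-1$; since polar duality in a simple $d$-polytope sends a $k$-simplex of the dual boundary complex to a face of dimension $d-1-k$, with $d=|\Pa|-1$ the face indexed by $\T$ has dimension $|\Pa|-1-|\EQ[\T]|$, which is \ref{aPApr:faces_dim}. Specializing this dimension to $0$ and to $|\Pa|-2$ yields \ref{aPApr:vert} and \ref{aPApr:facets}, where for \ref{aPApr:vert} one uses that the boundary complex of a polytope is pure, so that the maximal proper tubings are precisely those with $|\EQ[\T]|=|\Pa|-1$. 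For the failure of flagness in \ref{aPApr:flag}, I would exhibit an affine analog of the poset in \cref{fig:intr}: a small affine poset carrying three equivalence classes of proper tubes that are pairwise compatible (hence span edges of $\Cyc(\Pa)^\ast$) but whose union is not a tubing because the associated digraph $D$ has a directed $3$-cycle; then those three vertices span no $2$-simplex.

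For \ref{aPApr:faces}, given a proper tubing $\T$ let $\T^{\operatorname{max}}$ be the $n$-periodic tubing partition of $\Pa$ consisting of the maximal-by-inclusion tubes of $\T$ together with the singletons of the elements they do not cover, and for each $\eq[\t]\in\EQ[\T]$ let $\t/\T[\t]$ be the finite poset obtained from $\t$ by contracting the maximal tubes of $\T$ strictly contained in $\t$. Tracing through the stellar-subdivision construction of \cref{thm:aff_poset_cyc} exactly as in \cref{cor:poset_ass_properties}\ref{PApr:faces}, the face of $\Cyc(\Pa)$ indexed by $\T$ is combinatorially $\Cyc(\Pa/\T^{\operatorname{max}})\times\prod_{\eq[\t]\in\EQ[\T]}\Ass(\t/\T[\t])$; since $\Pa/\T^{\operatorname{max}}$ is again an affine poset and each $\t$ is finite, this realizes every face as a product of one affine poset cyclohedron (the ``root'' factor, which degenerates to a point exactly when $\T$ contains a maximal proper tube) and several poset associahedra. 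For \ref{aPApr:chain} and \ref{aPApr:claw}, I would set up face-poset isomorphisms with the cyclohedron and the type $B$ permutohedron directly, as in the chain/claw arguments for $\Ass(P)$. When $\Pa$ is a circular chain, every $\Pa$-tube is a cyclic arc of fewer than $n$ of the $n$ cyclically arranged points; one verifies that for a circular chain the acyclicity of $D_\T$ imposes no constraint, so (up to equivalence) proper tubings are precisely the finite families of proper cyclic arcs that are pairwise nested or disjoint, which are the faces of the $(|\Pa|-1)$-dimensional cyclohedron in one of its standard combinatorial models, with face containment corresponding to arc inclusion. When $\Pa$ is a circular claw, the proper tubings match the faces of the $(|\Pa|-1)$-dimensional type $B$ permutohedron (indexed by signed ordered set partitions, i.e.\ by the faces of the type-$B$ Coxeter fan), generalizing the bijection with ordered set partitions used in \cref{cor:poset_ass_properties}\ref{PApr:claw}.

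I expect the main obstacle to be \ref{aPApr:chain}: pinning down an explicit combinatorial model of the cyclohedron (for instance via cyclic bracketings, or plane rooted trees drawn in an annulus) whose face poset is manifestly isomorphic to the poset of proper tubings of the circular chain, including a careful check that the $D_\T$-acyclicity condition becomes vacuous in this case and that the ``wrap-around'' tubes are matched correctly; everything else is bookkeeping. As an alternative, \ref{aPApr:chain} and \ref{aPApr:claw} can also be deduced a posteriori from the homeomorphism $\Cyc(\Pa)\xrasim\Comp(\Pa)$, since for a circular chain, resp.\ circular claw, $\Comp(\Pa)$ is visibly the Axelrod--Singer-type compactification of $n$ ordered points on a circle, resp.\ its type $B$ analog.
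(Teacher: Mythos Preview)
Your treatment of \ref{aPApr:simple}--\ref{aPApr:facets}, \ref{aPApr:faces}, and \ref{aPApr:claw} matches the paper's, which says only that these are proven analogously to \cref{cor:poset_ass_properties}; your product formula in \ref{aPApr:faces} is in fact more explicit than what the paper records. For \ref{aPApr:claw} the paper uses the description of faces of $\PermB_{n-1}$ by chains of pairs $(K_i^+,K_i^-)$ of disjoint subsets of $[n-1]$, sending such a pair to the tube $(K^--n)\sqcup\{0\}\sqcup K^+$; this is equivalent to your signed-ordered-set-partition language.

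There is a genuine gap in \ref{aPApr:chain}. Your claim that for a circular chain ``every $\Pa$-tube is a cyclic arc of fewer than $n$ of the $n$ cyclically arranged points'' is false: there are $n$ pairwise \emph{inequivalent} maximal proper tubes $[a,a+n-1]$, one for each $a\in[n]$, each of size exactly $n$. On the circle all of them project to the same set (all $n$ points), so your cyclic-arc model collapses them to a single object and yields only $n(n-2)$ equivalence classes of proper tubes instead of the correct $n(n-1)$; the identification with the cyclohedron then fails already at the level of facets. (Equivalently: the graph-associahedron model of the cyclohedron via $C_n$ uses connected subsets of sizes $1$ through $n-1$, whereas proper $\Pa$-tubes have sizes $2$ through $n$; there is a bijection, but it is not ``a tube equals its image on the circle''.) The paper handles this with a different model: rooted plane trees embedded in a disk with leaves $0,1,\dots,n$ on the boundary, and for each non-leaf non-root vertex $v$ a tube $[a,b]$ read off from the leftmost and rightmost leaf descendants. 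When the root has degree~$1$, its unique child $v$ yields $b\equiv a-1\pmod n$, and the paper notes explicitly that the \emph{placement of the root} (not just the underlying tree) distinguishes the $n$ tubes $[a,a+n-1]$. You correctly flag the ``wrap-around'' tubes as the delicate point, but the model you propose cannot accommodate them without modification.
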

\begin{proof}
Each of the properties~\ref{aPApr:simple}--\ref{aPApr:claw} is either trivial or is proven similarly to its analog in \cref{cor:poset_ass_properties}. For the last two properties, we need to explain the combinatorial objects labeling the faces of the cyclohedron and the type $B$ permutohedron in order to connect them to \pipings.

\ref{aPApr:chain}: Similarly to the case of the associahedron, the faces of the $(n-1)$-dimensional cyclohedron are in bijection with rooted trees $T$ embedded in a disk such that the root has degree $\geq1$, all non-leaf vertices lie in the interior of the disk, and the leaves lie on the boundary and are labeled $0,1,2,\dots,n$ in clockwise order. Face closure relations again correspond to contracting non-leaf edges in $T$. Let $v$ be a non-leaf non-root vertex of $T$. The edges incident to $v$ have a natural cyclic order. Let $e$ be the edge connecting $v$ to its parent in $T$. Consider a walk that starts at the parent of $v$, traverses $e$ and then turns maximally left at each vertex until it reaches some leaf $a\in[n]$. Similarly, consider another walk which turns maximally right at each vertex until it reaches another leaf $b\in[n]$. The leaf descendants of $v$ naturally form a cyclic subinterval $[a,b]$ of $[n]$. We may thus associate a \pipe $\t_v$ to $v$ which equals $[a,b]$ if $a\leq b$ and $[a,b+n]$ if $a>b$. If the root of $T$ has degree $1$ and $v$ is its sole child vertex then $b$ equals $a-1$ modulo $n$, so we get $[a,b]=[n]$. Still, depending on the value of $a$, we get different \pipes $\t=[a,a+n-1]$, which corresponds to the different ways of placing the root of $T$ next to $v$. It is again clear that when $\Pa$ is a circular chain, the set of \pipes $\t_v$ where $v$ runs over the set of non-leaf non-root vertices of $T$ yields a proper $\Pa$-\piping. Moreover, we see that any proper $\Pa$-\piping arises uniquely in this way, and that contracting edges in trees corresponds to removing \pipes from a \piping.

\ref{aPApr:claw}: The $(n-1)$-dimensional \emph{type $B$ permutohedron} $\PermB_{n-1}$ is defined as the convex hull of all vectors obtained from $(1,2,\dots,n-1)$ by permuting the coordinates and changing their signs. The face poset of $\PermB_{n-1}$ coincides with the order poset of the boundary face poset of the $(n-1)$-dimensional cross-polytope; see~\cite[Example~1.3.2]{Wachs}. Thus the facets of $\PermB_{n-1}$ are in bijection with pairs $(K^+,K^-)$ of disjoint subsets of $[n-1]$ whose union is nonempty. Arbitrary faces of $\PermB_{n-1}$ are labeled by sets
\begin{equation*}%
  \{(K_1^+,K_1^-),(K_2^+,K_2^-),\dots,(K_r^+,K_r^-)\}
\end{equation*}
of such pairs satisfying the conditions
\begin{equation*}%
  K_1^+\subset K_2^+\subset\cdots\subset K_r^+\subset [n-1]\setminus K_r^-\subset [n-1]\setminus K_{r-1}^-\subset\cdots\subset [n-1]\setminus K_1^-;
\end{equation*}
see~\cite[Corollary~1.11]{Hetyei}. Identifying each pair $(K^+,K^-)$ with (the equivalence class of) the \pipe $(K^--n)\sqcup\{0\}\sqcup K^+$, we obtain a bijection between faces of $\PermB_{n-1}$ and proper $\Pa$-\pipings.
\end{proof}

It remains to justify the relation between affine \aposetdash cyclohedra and compactifications.%
\begin{proof}[Proof sketch of \cref{thm:aff_poset_cyc}]
The proof is obtained from that in \cref{sec:poset_compact} via straightforward modifications as we outline below. By convention, we write $\t\subsetneq\Pa$ for any \pipe $\t\neq \Pa$, including the case of maximal proper \pipes $\t\subsetneq\Pa$ satisfying $|\t|=|\Pa|$. Throughout the whole proof in \cref{sec:poset_compact}, we replace $P$ with $\Pa$ and $\prod_{|\t|>1}\Ord(\t)$ with $\prodb_{|\t|>1}\Orda(\t)$. The remaining changes are listed below.

By \cref{dfn:bx_to_tubing} (with $P$ replaced by $\Pa$), $\HAT(\bx)$ contains $\Pa$ and all \pipes in $\tubes(\bx[\Pa])$, thus $\HAT(\bx)$ is an infinite \piping. It still has the structure of an infinite $n$-periodic rooted tree described in \cref{dfn:tree}. The remaining definitions and proofs in \cref{sec:coh,sec:cell_decn} do not require any changes. The same applies to all results in \cref{sec:collapse_expand}, except that in \cref{cor:Comp_cell_decn}, $\Comp_{\T}(\Pa)$ is now homeomorphic to $\R^{|\Pa|-|\EQ[\T]|-1}$. The sets $\ijP$ and $\jiP$ in~\eqref{eq:ijP_jiP} become infinite when $\tp=\Pa$, but that does not affect the proof since only finitely many of their elements participate non-trivially in~\eqref{eq:tmax_dfn} and~\eqref{eq:Coll_dfn}. The rest of the proof in \cref{sec:collapse_expand,sec:Comp_top_manif} proceeds without change.
\end{proof}

\bibliographystyle{alpha} 
\bibliography{crit_polyt}

\begin{thebibliography}{PPPP19}

\bibitem[AB20]{AdBe}
Karim Adiprasito and Bruno Benedetti.
\newblock Barycentric subdivisions of convex complexes are collapsible.
\newblock {\em Discrete Comput. Geom.}, 64(3):608--626, 2020.

\bibitem[AS94]{AxSi}
Scott Axelrod and I.~M. Singer.
\newblock Chern-{S}imons perturbation theory. {II}.
\newblock {\em J. Differential Geom.}, 39(1):173--213, 1994.

\bibitem[Bj{\"o}84]{Bjo}
A.~Bj{\"o}rner.
\newblock Posets, regular {CW} complexes and {B}ruhat order.
\newblock {\em European J. Combin.}, 5(1):7--16, 1984.

\bibitem[BT94]{BoTa}
Raoul Bott and Clifford Taubes.
\newblock On the self-linking of knots.
\newblock {\em J. Math. Phys.}, 35(10):5247--5287, 1994.

\bibitem[CD06]{CaDe}
Michael~P. Carr and Satyan~L. Devadoss.
\newblock Coxeter complexes and graph-associahedra.
\newblock {\em Topology Appl.}, 153(12):2155--2168, 2006.

\bibitem[CPY18]{CPY}
Michael Chmutov, Pavlo Pylyavskyy, and Elena Yudovina.
\newblock Matrix-ball construction of affine {R}obinson-{S}chensted
  correspondence.
\newblock {\em Selecta Math. (N.S.)}, 24(2):667--750, 2018.

\bibitem[Dav08]{Davis}
Michael~W. Davis.
\newblock {\em The geometry and topology of {C}oxeter groups}, volume~32 of
  {\em London Mathematical Society Monographs Series}.
\newblock Princeton University Press, Princeton, NJ, 2008.

\bibitem[DCP95]{DCP}
C.~De~Concini and C.~Procesi.
\newblock Wonderful models of subspace arrangements.
\newblock {\em Selecta Math. (N.S.)}, 1(3):459--494, 1995.

\bibitem[DFRS15]{DFRS}
Satyan~L. Devadoss, Stefan Forcey, Stephen Reisdorf, and Patrick Showers.
\newblock Convex polytopes from nested posets.
\newblock {\em European J. Combin.}, 43:229--248, 2015.

\bibitem[EP14]{EiPr2}
David Einstein and James Propp.
\newblock Piecewise-linear and birational toggling.
\newblock In {\em 26th {I}nternational {C}onference on {F}ormal {P}ower
  {S}eries and {A}lgebraic {C}ombinatorics ({FPSAC} 2014)}, Discrete Math.
  Theor. Comput. Sci. Proc., AT, pages 513--524. Assoc. Discrete Math. Theor.
  Comput. Sci., Nancy, 2014.

\bibitem[EP21]{EiPr1}
David Einstein and James Propp.
\newblock Combinatorial, piecewise-linear, and birational homomesy for products
  of two chains.
\newblock {\em Algebr. Comb.}, 4(2):201--224, 2021.

\bibitem[FM94]{FuMa}
William Fulton and Robert MacPherson.
\newblock A compactification of configuration spaces.
\newblock {\em Ann. of Math. (2)}, 139(1):183--225, 1994.

\bibitem[Fre82]{Fre}
Michael~Hartley Freedman.
\newblock The topology of four-dimensional manifolds.
\newblock {\em J. Differential Geom.}, 17(3):357--453, 1982.

\bibitem[FS05]{FeSt}
Eva~Maria Feichtner and Bernd Sturmfels.
\newblock Matroid polytopes, nested sets and {B}ergman fans.
\newblock {\em Port. Math. (N.S.)}, 62(4):437--468, 2005.

\bibitem[FZ02]{FZ}
Sergey Fomin and Andrei Zelevinsky.
\newblock Cluster algebras. {I}. {F}oundations.
\newblock {\em J. Amer. Math. Soc.}, 15(2):497--529 (electronic), 2002.

\bibitem[Gai03]{Gaiffi}
Giovanni Gaiffi.
\newblock Models for real subspace arrangements and stratified manifolds.
\newblock {\em Int. Math. Res. Not.}, (12):627--656, 2003.

\bibitem[Gal21]{crit_tnn}
Pavel Galashin.
\newblock {Totally nonnegative critical varieties}.
\newblock {\em Int. Math. Res. Not. IMRN, to appear. \arxiv{2110.08548v1}},
  2021.

\bibitem[Gal23]{crit}
Pavel Galashin.
\newblock Critical varieties in the {G}rassmannian.
\newblock {\em Comm. Math. Phys.}, 401(3):3277--3333, 2023.

\bibitem[GKL19]{GKL3}
Pavel Galashin, Steven~N. Karp, and Thomas Lam.
\newblock {Regularity theorem for totally nonnegative flag varieties}.
\newblock {\em J. Amer. Math. Soc., to appear. \arxiv{1904.00527v3}}, 2019.

\bibitem[GP19]{R_systems}
Pavel Galashin and Pavlo Pylyavskyy.
\newblock {$R$}-systems.
\newblock {\em Selecta Math. (N.S.)}, 25(2):Paper No. 22, 2019.

\bibitem[Hai84]{Haiman}
Mark Haiman.
\newblock {Constructing the associahedron}.
\newblock Preprint, 1984.

\bibitem[Har77]{Hartshorne}
Robin Hartshorne.
\newblock {\em Algebraic geometry}.
\newblock Graduate Texts in Mathematics, No. 52. Springer-Verlag, New
  York-Heidelberg, 1977.

\bibitem[Het20]{Hetyei}
Gábor Hetyei.
\newblock {The type $B$ permutohedron and the poset of intervals as a
  Tchebyshev transform}.
\newblock {\em \arxiv{2007.07362v2}}, 2020.

\bibitem[JS14]{JoSa}
Katharina Jochemko and Raman Sanyal.
\newblock Arithmetic of marked order polytopes, monotone triangle reciprocity,
  and partial colorings.
\newblock {\em SIAM J. Discrete Math.}, 28(3):1540--1558, 2014.

\bibitem[Kon99]{Kontsevich}
Maxim Kontsevich.
\newblock Operads and motives in deformation quantization.
\newblock {\em Lett. Math. Phys.}, 48(1):35--72, 1999.

\bibitem[Lee89]{Lee}
Carl~W. Lee.
\newblock The associahedron and triangulations of the {$n$}-gon.
\newblock {\em European J. Combin.}, 10(6):551--560, 1989.

\bibitem[LP07]{LaPo_alcoved}
Thomas Lam and Alexander Postnikov.
\newblock Alcoved polytopes. {I}.
\newblock {\em Discrete Comput. Geom.}, 38(3):453--478, 2007.

\bibitem[LP16a]{LP_LP}
Thomas Lam and Pavlo Pylyavskyy.
\newblock Laurent phenomenon algebras.
\newblock {\em Camb. J. Math.}, 4(1):121--162, 2016.

\bibitem[LP16b]{LP_linear}
Thomas Lam and Pavlo Pylyavskyy.
\newblock Linear {L}aurent phenomenon algebras.
\newblock {\em Int. Math. Res. Not. IMRN}, (10):3163--3203, 2016.

\bibitem[LTV10]{LTV}
Pascal Lambrechts, Victor Turchin, and Ismar Voli\'{c}.
\newblock Associahedron, cyclohedron and permutohedron as compactifications of
  configuration spaces.
\newblock {\em Bull. Belg. Math. Soc. Simon Stevin}, 17(2):303--332, 2010.

\bibitem[Oda88]{Oda}
Tadao Oda.
\newblock {\em Convex bodies and algebraic geometry}, volume~15 of {\em
  Ergebnisse der Mathematik und ihrer Grenzgebiete (3) [Results in Mathematics
  and Related Areas (3)]}.
\newblock Springer-Verlag, Berlin, 1988.
\newblock An introduction to the theory of toric varieties, Translated from the
  Japanese.

\bibitem[Per02]{Per1}
Grisha Perelman.
\newblock The entropy formula for the {R}icci flow and its geometric
  applications.
\newblock {\em \arxiv{math/0211159}}, 2002.

\bibitem[Per03a]{Per2}
Grisha Perelman.
\newblock Finite extinction time for the solutions to the {R}icci flow on
  certain three-manifolds.
\newblock {\em \arxiv{math/0307245}}, 2003.

\bibitem[Per03b]{Per3}
Grisha Perelman.
\newblock {R}icci flow with surgery on three-manifolds.
\newblock {\em \arxiv{math/0303109}}, 2003.

\bibitem[Pos05]{Pos_affine}
Alexander Postnikov.
\newblock Affine approach to quantum {S}chubert calculus.
\newblock {\em Duke Math. J.}, 128(3):473--509, 2005.

\bibitem[Pos09]{Pos_perm}
Alexander Postnikov.
\newblock Permutohedra, associahedra, and beyond.
\newblock {\em Int. Math. Res. Not. IMRN}, (6):1026--1106, 2009.

\bibitem[PPPP19]{PPPP}
Arnau Padrol, Yann Palu, Vincent Pilaud, and Pierre-Guy Plamondon.
\newblock {Associahedra for finite type cluster algebras and minimal relations
  between $\mathbf{g}$-vectors}.
\newblock {\em \arxiv{1906.06861v3}}, 2019.

\bibitem[PRW08]{PRW}
Alex Postnikov, Victor Reiner, and Lauren Williams.
\newblock Faces of generalized permutohedra.
\newblock {\em Doc. Math.}, 13:207--273, 2008.

\bibitem[Sac23]{Sack}
Andrew Sack.
\newblock {A realization of poset associahedra}.
\newblock {\em \arxiv{2301.11449v2}}, 2023.

\bibitem[Sim03]{Simion}
Rodica Simion.
\newblock A type-{B} associahedron.
\newblock volume~30, pages 2--25. 2003.
\newblock Formal power series and algebraic combinatorics (Scottsdale, AZ,
  2001).

\bibitem[Sin04]{Sinha}
Dev~P. Sinha.
\newblock Manifold-theoretic compactifications of configuration spaces.
\newblock {\em Selecta Math. (N.S.)}, 10(3):391--428, 2004.

\bibitem[Sma61]{Sma}
Stephen Smale.
\newblock Generalized {P}oincar\'e's conjecture in dimensions greater than
  four.
\newblock {\em Ann. of Math. (2)}, 74:391--406, 1961.

\bibitem[Sta63]{Stasheff}
James~Dillon Stasheff.
\newblock Homotopy associativity of {$H$}-spaces. {I}, {II}.
\newblock {\em Trans. Amer. Math. Soc. 108 (1963), 275-292; ibid.},
  108:293--312, 1963.

\bibitem[Sta86]{Stanley_two}
Richard~P. Stanley.
\newblock Two poset polytopes.
\newblock {\em Discrete Comput. Geom.}, 1(1):9--23, 1986.

\bibitem[SW12]{StWi}
Jessica Striker and Nathan Williams.
\newblock Promotion and rowmotion.
\newblock {\em European J. Combin.}, 33(8):1919--1942, 2012.

\bibitem[Tam51]{Tamari}
Dov Tamari.
\newblock {\em Mono\"{\i}des pr\'{e}ordonn\'{e}s et cha\^{\i}nes de {M}alcev}.
\newblock Th\`ese, Universit\'{e} de Paris, 1951.

\bibitem[Wac07]{Wachs}
Michelle~L. Wachs.
\newblock Poset topology: tools and applications.
\newblock In {\em Geometric combinatorics}, volume~13 of {\em IAS/Park City
  Math. Ser.}, pages 497--615. Amer. Math. Soc., Providence, RI, 2007.

\bibitem[Zie95]{Ziegler}
G\"{u}nter~M. Ziegler.
\newblock {\em Lectures on polytopes}, volume 152 of {\em Graduate Texts in
  Mathematics}.
\newblock Springer-Verlag, New York, 1995.

\end{thebibliography}

\end{document}